\definecolor{wb}{RGB}{51,153,255}
\numberwithin{equation}{subsection}
\newcommand{\defeq}{\vcentcolon=}
\newcommand{\eqdef}{=\vcentcolon}
\def\moverlay{\mathpalette\mov@rlay}
\def\mov@rlay#1#2{\leavevmode\vtop{%
   \baselineskip\z@skip \lineskiplimit-\maxdimen
   \ialign{\hfil$\m@th#1##$\hfil\cr#2\crcr}}}
\newcommand{\charfusion}[3][\mathord]{
    #1{\ifx#1\mathop\vphantom{#2}\fi
        \mathpalette\mov@rlay{#2\cr#3}
      }
    \ifx#1\mathop\expandafter\displaylimits\fi}
\newcommand{\longhookrightarrow}{\lhook\joinrel\longrightarrow}
\newtheoremstyle{definitions}
 	{\topsep}% measure of space to leave above the theorem. E.g.: 3pt
	{\topsep}% measure of space to leave below the theorem. E.g.: 3pt
	{}% name of font to use in the body of the theorem
	{}% measure of space to indent
	{\bfseries}% name of head font
	{:}% punctuation between head and body
	{.5em}% space after theorem head; " " = normal interword space
	{}
\newtheoremstyle{lemmata}
	{\topsep}% measure of space to leave above the theorem. E.g.: 3pt
	{\topsep}% measure of space to leave below the theorem. E.g.: 3pt
	{\itshape} %{\slshape}% name of font to use in the body of the theorem
	{}% measure of space to indent
	{\bfseries}% name of head font
	{:}% punctuation between head and body
	{.5em}% space after theorem head; " " = normal interword space
	{}
\theoremstyle{lemmata}
\newtheorem{Theorem}[subsection]{Theorem}
\newtheorem{Lemma}[subsection]{Lemma}
\newtheorem{Corollary}[subsection]{Corollary}
\newtheorem{Proposition}[subsection]{Proposition}
\theoremstyle{definitions}
\newtheorem{Definition}[subsection]{Definition}
\newtheorem{Remark}[subsection]{Remark}
\newtheorem*{Remarks-nn}{Remarks}
\newtheorem{Example}[subsection]{Example}
\newtheorem{Procedure}[subsection]{Procedure}
\newtheorem{Table}[subsubsection]{Table}
\DeclareMathOperator{\spec}{spec}
\DeclareMathOperator{\GL}{GL}
\DeclareMathOperator{\End}{End}
\DeclareMathOperator{\PGL}{PGL}
\DeclareMathOperator{\aut}{aut}
\DeclareMathOperator{\Mod}{Mod}
\DeclareMathOperator{\st}{st}
\title[The behavior of distinguished forms on the fundamental domain]{On Drinfeld modular forms of higher rank V: The behavior of distinguished forms on the fundamental domain}
\author{Ernst-Ulrich Gekeler}
\date{\today}
\newcommand*\rows{5}
\begin{document}

\begin{abstract}
	This paper continues work of the earlier articles with the same title. For two classes of modular forms $f$:
	
	\begin{itemize}
		\item para-Eisenstein series $\alpha_{k}$ and 
		\item coefficient forms ${}_a \ell_{k}$, where $k \in \mathds{N}$ and $a$ is a non-constant element of $\mathds{F}_{q}[T]$,
	\end{itemize}
	the growth behavior on the fundamental domain and the zero loci $\Omega(f)$
	as well as their images $\mathcal{BT}(f)$ in the Bruhat-Tits building $\mathcal{BT}$ are studied. We obtain a complete description for $f = \alpha_{k}$ and for those of the forms
	${}_{a}\ell_{k}$ where $k \leq \deg a$. It turns out that in these cases, $\alpha_{k}$ and ${}_{a}\ell_{k}$ are strongly related, e.g., $\mathcal{BT}({}_{a}\ell_{k}) = \mathcal{BT}(\alpha_{k})$, and that
	$\mathcal{BT}(\alpha_{k})$ is the set of $\mathds{Q}$-points of a full subcomplex of $\mathcal{BT}$ with nice properties. As a case study, we present in detail the outcome for the forms
	$\alpha_{2}$ in rank 3.
\end{abstract}

\maketitle

\setcounter{section}{-1}

\section{Introduction}

Drinfeld modular forms are a crucial ingredient in the arithmetic of global function fields $K$, e.g., $K = \mathds{F}_{q}(T)$. While the case of forms of rank 2, which shows striking similarities (but also
important differences) with the case of classical elliptic modular forms, has been developed since the late 1970's (e.g., \cite{Goss1980}, \cite{Gekeler1979}), serious work on higher rank Drinfeld modular
forms started only in the last years with papers of Basson, Breuer, Pink \cite{Basson2017} \cite{BassonBreuer2017} \cite{BassonBreuerPink1-ta} \cite{BassonBreuerPink2-ta} \cite{BassonBreuerPink3-ta}
and the author \cite{Gekeler2017} \cite{Gekeler-ta-1} \cite{Gekeler2018} \cite{Gekeler-ta-2}. 

In \cite{Gekeler2017} and \cite{Gekeler-ta-1}, properties of the building map $\lambda$ from the Drinfeld symmetric space $\Omega^{r}$ (the habitat of modular forms) to the Bruhat-Tits building 
$\mathcal{BT}^{r}$ were investigated and used to describe growth/decay properties of modular forms $f$ on the fundamental domain $\mathbf{F}^{r} \subset \Omega^{r}$ of the modular group
$\Gamma = \GL(r, \mathds{F}_{q}[T])$, and the zero locus of $f$.

If $f$ is an (ortho-) Eisenstein series $E_{k}$ or one of the basic coefficient forms $g_{1}, \dots, g_{r} = \Delta$, the results are rather complete, see e.g. \cite{Gekeler2017} Theorem 5.5 for the 
discriminant function $\Delta$.

The aim of the present work is to find corresponding descriptions for other distinguished modular forms for $\Gamma$ like the para-Eisenstein series $\alpha_{k}$ and the coefficient forms ${}_{a}\ell_{k}$.
The ${}_{a}\ell_{k}$ ($a \in \mathds{F}_{q}[T]$, $k \in \mathds{N}$) are the coefficients of the generic Drinfeld module $\phi^{\boldsymbol{\omega}}$ $(\boldsymbol{\omega} \in \Omega^{r}$) of rank $r$,
i.e., of the $a$-th operator polynomial $\phi_{a}^{\boldsymbol{\omega}} = a + \sum_{k \geq 1} {}_{a}\ell_{k} \tau^{k}$, in particular, $g_{k} = {}_{T}\ell_{k} $. Both the ortho- and the para-Eisenstein series are
analogues of---different aspects of---Eisenstein series in the theory of elliptic modular forms. Besides the intrinsic interest in the $\alpha_{k}$ (see \cite{ChenLee2012}, \cite{ChenLee2013-1}), these are
important as they approximate (after suitable normalization, see \cite{Gekeler-ta-1} Theorem 4.13) ${}_{a}\ell_{k}$ for $\deg a \gg 0$. The ${}_{a}\ell_{k}$ themselves play a role in height and isogeny estimates
(\cite{ChenLee2013-2}, \cite{ChenLee2019}, \cite{BreuerPazukiRazafinjatovo-ta}) and determine arithmetic-geometric properties of moduli schemes (\cite{Gekeler2019}, \cite{PapikianWei-ta}).

It has been observed in \cite{Gekeler-ta-1} (4.15) that the distinguished modular forms $f = E_{k}$, $g_{k}$, $\alpha_{k}$, ${}_{a}\ell_{k}$ have the following property (proven for $E_{k}$, $g_{k}$, conjectured for
$\alpha_{k}$, ${}_{a}\ell_{k}$): The image $\mathcal{BT}^{r}(f)$ in $\mathcal{BT}^{r}$ of the vanishing set $\Omega^{r}(f)$ is the set of $\mathds{Q}$-points of a full subcomplex of $\mathcal{BT}^{r}$ which is
everywhere of dimension $r-2$ and connected if $r \geq 3$.

This is astonishing as it is easy to find modular forms $f$ such that $\mathcal{BT}^{r}(f)$ fails to be (the set of $\mathds{Q}$-points of) a subcomplex, let alone of such a nice shape. We call modular
forms with that property \textbf{simplicial}.

For modular forms $f$ without zeroes on $\Omega^{r}$ (e.g., the discriminant function $\Delta$), the van der Put transform $P(f)$ provides a relationship between properties of $\lvert f \rvert$ and
certain functions on the set of arrows ($=$ oriented $1$-simplices) of $\mathcal{BT}^{r}$ \cite{Gekeler-ta-3}. That relationship may be extended to modular forms $f$ with zeroes, provided that $f$
is simplicial. The absolute value $\lvert f \rvert$ must then be replaced with the spectral norm $\lVert f \rVert_{\mathbf{x}}$, which in contrast to $\lvert f \rvert$ is still well-defined as a function on
$\mathcal{BT}^{r}(\mathds{Q})$. Instead of the source condition (valid for invertible $f$)
\[
	\sum_{e \in \mathbf{A}_{\mathbf{x},1}} P(f)(e) = 0,
\]
where the sum is over the $1$-arrows with origin the vertex $\mathbf{x}$ of $\mathcal{BT}^{r}$, that sum takes as value the \textbf{local inner degree} $N_{\mathbf{x}}(f)$, which in case $r=2$ is simply
the number of zeroes, counted with multiplicity, of $f$ in the fiber $\Omega_{\mathbf{x}}^{r} = \lambda^{-1}(\mathbf{x}) \subset \Omega^{r}$. Hence $\lVert f \rVert_{\mathbf{x}}$, $P(f)$ and $N_{\mathbf{x}}(f)$
are important attributes of the simplicial modular form $f$. Our main results are:
\begin{itemize}
	\item Theorem \ref{Theorem.Characterisation-of-certain-subsets-of-BTIQ}, which describes $\mathcal{BT}^{r}(\alpha_{k})$ and shows that $\alpha_{k}$ is simplicial in the above sense;
	\item Theorem \ref{Theorem.Characterisation-of-elements-of-BTIQ}, which gives an easy-to-verify criterion for the existence of zeroes of ${}_{a}\ell_{k}$ on fibers $\Omega_{\mathbf{x}}$ of $\lambda$;
	\item Theorem \ref{Theorem.On-simpliciality-of-modular-forms}. It states that for $k \leq \deg \alpha$ the forms ${}_{a}\ell_{k}$ and $\alpha_{k}$ share important properties. Their van der Put transforms $P(f)$, vanishing sets
	$\mathcal{BT}^{r}(f)$, and local inner degrees $N_{\mathbf{x}}(f)$ agree. In particular, such ${}_{a}\ell_{k}$ are simplicial.
\end{itemize}
The plan of the paper is as follows: After presenting some background and collecting prerequisites from \cite{Gekeler2017} and \cite{Gekeler-ta-1} in Section 1, we deal in Section 2
with the $p$-Eisenstein series $\alpha_{k}$. We first show (Theorem \ref{Theorem.Smoothness-of-certain-vanishing-locei}) that $\Omega^{r}(\alpha_{k})$ is always smooth (as is the case for the special $o$-Eisenstein series 
$E_{q^{k}-1}$ of weight $q^{k}-1$, \cite{Gekeler-ta-1} Theorem 4.5). Then we introduce the concept of \textbf{characteristic sequence} of the lattice $\Lambda_{\boldsymbol{\omega}}$ attached to
$\boldsymbol{\omega} \in \Omega^{r}$. It is a special form of arranging the information on the Newton polygon of the exponential function $e_{\boldsymbol{\omega}}$ corresponding to 
$\Lambda_{\boldsymbol{\omega}}$, and particularly well adapted to our study. From a careful analysis of the characteristic sequence, Theorem \ref{Theorem.Characterisation-of-certain-subsets-of-BTIQ} about $\mathcal{BT}^{r}(\alpha_{k})$ results.
Besides we derive several properties of the spectral norm $\lVert \alpha_{k} \rVert_{\mathbf{x}}$ as a function on $\mathcal{BT}^{r}(\mathds{Q})$.

Section 3 is devoted to a similar study of the coefficient forms ${}_{a}\ell_{k}$. We present in Theorem \ref{Theorem.Characterisation-of-elements-of-BTIQ} a criterion to decide whether or not some point $\mathbf{x} \in \mathcal{BT}^{r}(\mathds{Q})$
belongs to $\mathcal{BT}^{r}({}_{a}\ell_{k})$. It requires some explicit calculations on the fundamental domain and an adaptation of the characteristic sequence to the finite $\mathds{F}_{q}$-lattice
$_{a}\phi^{\boldsymbol{\omega}}$ of $a$-torsion points of $\phi^{\boldsymbol{\omega}}$. With the same techniques, we are able to show (under the assumption that $k$ doesn't exceed the degree of $a$)
that the van der Put transforms $P({}_{a}\ell_{k})$ and $P(\alpha_{k})$ agree (Theorem \ref{Theorem.On-simpliciality-of-modular-forms}). This also implies equality of $\mathcal{BT}^{r}({}_{a}\ell_{k})$ with $\mathcal{BT}^{r}(\alpha_{k})$, and of the local 
inner degrees. In particular ${}_{a}\ell_{k}$ is simplicial in this case (as is expected but not yet proved for all ${}_{a}\ell_{k}$).

Section 4 does not contribute new results in the proper sense; instead, we present in a case study all the details of the interplay of spectral norms, zero locus and local inner degrees of a simplicial modular form
$f$, for the form $f= \alpha_{2}$ in rank $r=3$. This is the most simple case next to rank 2 (settled in \cite{Gekeler1999-1} for $\alpha_{k}$ and in \cite{Gekeler1999-2} for the ${}_{a}\ell_{k}$); simple enough
to present in a few pages, but complex enough to display all the facets and techniques of the general case. We hope it will be useful for readers who want to get their hands on more involved examples.

As the present paper is part of an ongoing project, still many natural questions are open. Hopefully, these will be addressed in future work. We mention a few.
\begin{itemize}
	\item The smoothness of $\Omega^{r}({}_{a}\ell_{k})$ is established only for $k < r$ (\cite{Gekeler-ta-1} Theorem 4.19). Does it hold in general?
	\item Similarly, we know $\mathcal{BT}^{r}({}_{a}\ell_{k})$ (and, as a consequence, the simpliciality of ${}_{a}\ell_{k}$) only if $k$ is less or equal to the degree of $a$.
	\item After some normalization $\widetilde{(\,\cdot\,)}$, $\lim_{\deg a \to \infty} {}_{a}\tilde{\ell}_{k} = \tilde{\alpha}_{k}$ (\cite{Gekeler-ta-1}, Theorem 4.13). Estimate the error terms (say, above a vertex
	$\mathbf{x}$ of $\mathcal{BT}^{r}$)!
	\item How do the zero loci of ${}_{a}\ell_{k}$ and $\alpha_{k}$ in $\Omega_{\mathbf{x}}^{r}$ relate if $k \leq \deg a$ and $\mathbf{x} \in \mathcal{BT}^{r}(\alpha_{k}) = \mathcal{BT}^{r}({}_{a}\ell_{k})$?
	\item Determine the reductions $\bar{f}$ of the forms $f = E_{q^{k}-1}$, $\alpha_{k}$, ${}_{a}\ell_{k}$ (and notably of ${}_{T}\ell_{k} = g_{k}$) at vertices $\mathbf{x}$ of $\mathcal{BT}^{r}$
	(as we did for $\alpha_{2}$ in Section 4). This has been achieved in a handful of isolated cases (see \cite{Gekeler2017}, Section 7), but a general method lacks so far.
\end{itemize}

\subsection*{Notations and conventions} The notation agrees largely with that of \cite{Gekeler2017} and \cite{Gekeler-ta-1}:
\begin{itemize}
	\item $\mathds{F} = \mathds{F}_{q}$ ist the finite field with $q$ elements, of characteristic $p$, with algebraic closure $\bar{\mathds{F}}$;
	\item $A = \mathds{F}[T]$ is the polynomial ring over $\mathds{F}$ with field of fractions $K = \mathds{F}(T)$;
	\item $K_{\infty} = \mathds{F}((T^{-1}))$ is the completion of $K$ with respect to the absolute value $\lvert \cdot \rvert = \lvert \cdot \rvert_{\infty}$ at infinity, normalized by $\lvert T \rvert = q$;
	\item $C_{\infty}$ is the completed algebraic closure of $K_{\infty}$, $O_{\infty} \subset K_{\infty}$ and $O_{C_{\infty}} \subset C_{\infty}$ are the respective rings of integers, $\pi$ the uniformizer
	$T^{-1}$ of $O_{\infty}$. Note that the residue class field of $O_{C_{\infty}}$ equals the algebraic closure $\bar{\mathds{F}}$ of the residue class field $\mathds{F}$ of $O_{\infty}$;
	\item $\log = - \nu_{\infty} \colon C_{\infty}^{*} \to \mathds{Q}$ is the map $z \mapsto \log_{q} \lvert z \rvert$. 
\end{itemize}
Throughout, $r$ is a natural number larger or equal to 2, and $\Gamma = \GL(r,A)$ the modular group. All our modular forms are with respect to $\Gamma$.

As usual we identify the ring $\End_{\mathds{F}}( \mathds{G}_{a}/C_{\infty})$ of $\mathds{F}$-linear endomorphisms of the additive group $\mathds{G}_{a}/C_{\infty}$ (i.e., of $q$-additive polynomials
of shape $\sum a_{i}X^{q^{i}}$, where $a_{i} \in C_{\infty}$ and multiplication is defined through insertion) with the non-commutative polynomial ring $C_{\infty} \{ \tau \}$, where $\tau a = a^{q}\tau$
for constants $a$, through $X^{q^{i}} \leftrightarrow \tau^{i}$. 

Similarly, $q$-additive power series are identified with $C_{\infty}\{\{ \tau \}\}$. As long as convergence questions are neglected, we may replace $C_{\infty}$ with any $\mathds{F}$-algebra $R$,
so $\End_{\mathds{F}}(\mathds{G}_{a}/R) \cong R\{\tau\}$, etc.

An \textbf{$\mathds{F}$-lattice} in $C_{\infty}$ (\textbf{$A$-lattice} if it is an $A$-submodule) is a discrete $\mathds{F}$-subspace $\Lambda$ of $C_{\infty}$, of finite or infinite dimension.
Associated with $\Lambda$ there are the functions 
\begin{align}
	e_{\Lambda}(X) 		&= X \sideset{}{'}\prod_{\lambda \in \Lambda} (1 - X/\lambda) \label{Eq.Exponential-function-associated-with-IF-lattice}  \\
											&= \sum_{i \geq 0} \alpha_{i}(\Lambda) X^{q^{i}} = \sum \alpha_{i}(\Lambda) \tau^{i}	&& \text{(the \textbf{{\color{green} exponential function}})} \nonumber \\
	\log_{\Lambda}(X)	&= \sum_{i \geq 0} \beta_{i}(\Lambda) \tau^{i} \nonumber \\
											&= \text{inverse of $e_{\Lambda}(X)$ in $C_{\infty}\{\{ \tau \}\}$}	&&(\text{the \textbf{{\color{green} logarithm function}}}) \nonumber \\
	E_{k}(\Lambda)			&= \sideset{}{'}\sum_{\lambda \in \Lambda} \lambda^{-k}	&&\text{(the \textbf{{\color{green}$k$-th Eisenstein series})}}. \nonumber
\end{align}
Here and in the sequel, the primed product $\sideset{}{'}{\textstyle\prod}$ or sum $\sideset{}{'}{\textstyle\sum}$ is the product or sum over the non-vanishing elements of the index set.

Given $r$ elements $\omega_{1}, \dots, \omega_{r} \in C_{\infty}$ linearly independent over $K_{\infty}$ (\textbf{$K_{\infty}$-l.i.}), we write
\begin{equation} \label{Eq.A-lattice-generated-by-r-elements-of-C-infty}
	\Lambda = \Lambda_{\boldsymbol{\omega}} = \sum_{1 \leq i \leq r} A \omega_{i}
\end{equation}
for the $A$-lattice generated by $\boldsymbol{\omega} = (\omega_{1}, \dots, \omega_{r})$, $e_{\boldsymbol{\omega}} \defeq e_{\Lambda_{\boldsymbol{\omega}}}$ for its exponential function and
$\phi^{\boldsymbol{\omega}} = \phi^{\Lambda_{\boldsymbol{\omega}}}$ for the attached Drinfeld $A$-module of rank $r$. Its $a$-th operator polynomial ($a\in A$) is 
\begin{equation} \label{Eq.a-th-operator-polynomial}
	\phi_{a}^{\boldsymbol{\omega}}(X) = \sum_{0 \leq k \leq r \deg \alpha} {}_{a}\ell_{k}(\boldsymbol{\omega})X^{q^{i}}
\end{equation}
with kernel $_{a}\phi^{\boldsymbol{\omega}} \defeq \{ z \in C_{\infty} \mid \phi_{a}^{\boldsymbol{\omega}}(z)= 0\} \cong (A/(a))^{r}$. We write $E_{k}(\boldsymbol{\omega})$ for 
$E_{k}(\Lambda_{\boldsymbol{\omega}})$, $\alpha_{k}(\boldsymbol{\omega})$ for $\alpha_{k}(\Lambda_{\boldsymbol{\omega}})$, etc.

The \textbf{Drinfeld symmetric spaces} are
\begin{align}
					\Omega^{*} 	&= \Omega^{r,*} = \{ \boldsymbol{\omega} \in C_{\infty}^{r} \mid \omega_{1}, \dots, \omega_{r} \text{ $K_{\infty}$-l.i.}\} \label{Eq.Drinfeld-symmetrical-spaces} \\
	\text{and } \Omega 	&= \Omega^{r}	= \{ \boldsymbol{\omega} = (\omega_{1}: \ldots: \omega_{r}) \in \mathds{P}^{r-1}(C_{\infty}) \mid \omega_{1}, \dots, \omega_{r} \text{ $K_{\infty}$-l.i.}\}. \nonumber 
\end{align}
If not stated otherwise, we assume homogeneous coordinates on $\Omega$ normalized such that $\boldsymbol{\omega} = (\omega_{1}: \ldots : 1)$, i.e., $\omega_{r} = 1$.

If $\mathcal{S}$ is a simplicial complex, $\mathcal{S}(\mathds{Z})$ denotes the set of vertices, $\mathcal{S}(\mathds{R})$ the points of the realization and 
$\mathcal{S}(\mathds{Q}) \subset \mathcal{S}(\mathds{R})$ the set of points with rational barycentric coordinates. We often write $\mathcal{S}$ for $\mathcal{S}(\mathds{R})$, and 
\enquote{$\mathbf{x} \in \mathcal{S}$} will mean $\mathbf{x} \in \mathcal{S}(\mathds{R})$. A \textbf{full subcomplex} $\mathcal{T}$ of $\mathcal{S}$ is given by a subset $\mathcal{T}(\mathds{Z})$ of 
$\mathcal{S}(\mathds{Z})$; its simplices are those of $\mathcal{S}$ intersected with $\mathcal{T}(\mathds{Z})$.

The \textbf{Bruhat-Tits building} $\mathcal{BT} = \mathcal{BT}^{r}$ of $\PGL(r, K_{\infty})$ is a contractible simplicial complex of dimension $r-1$, on which $\PGL(r, K_{\infty})$ acts transitively. Its vertex
set $\mathcal{BT}(\mathds{Z})$ is the set of similarity classes $[L]$ of $O_{\infty}$-lattices in the $K_{\infty}$-vector space $V = K_{\infty}^{r}$. The vertices $v_{0} = [L_{0}], \dots, v_{s} = [L_{s}]$ form an $s$-simplex
if and only if the classes $[L_{i}]$ have representatives $L_{i}$ such that $L_{0} \supsetneq L_{1} \supsetneq \dots \supsetneq L_{s} \supsetneq \pi L_{0}$. The set
$\mathcal{BT}(\mathds{R})$ of the realization of $\mathcal{BT}$ is in 1-1 correspondence with the set of similarity classes of non-archimedean real-valued norms
$\nu$ on $V$, where the restriction to $\mathcal{BT}(\mathds{Z}) \subset \mathcal{BT}(\mathds{R})$ is given by 
\begin{align*}
	\mathcal{BT}(\mathds{Z})	&\longhookrightarrow \{ \text{classes of norms on $V$} \} \\
	[L]													&\longmapsto \text{class $[\nu_{L}]$ of $\nu_{L}$}
\end{align*}
with $\nu_{L}(\mathbf{v}) \defeq \inf \{ \lvert a \rvert^{-1} \mid a \in K_{\infty} \text{ such that $a\textbf{v} \in L$} \}$ for $\textbf{v} \in V$. $\Omega$ and $\mathcal{BT}$ are related through the 
\textbf{building map}
\begin{align}
	\lambda \colon \Omega 	&\longrightarrow \mathcal{BT}(\mathds{R}), \label{Eq.Building-map-between-Omega-and-Bruhat-Tits-building}\\
									\boldsymbol{\omega} 	&\longmapsto [\nu_{\boldsymbol{\omega}}] \nonumber ,
\end{align}
where $\nu_{\boldsymbol{\omega}}(\mathbf{v}) = \lvert \sum v_{i} \omega_{i} \rvert$ if $\boldsymbol{\omega} = (\omega_{1}: \ldots: 1)$ and $\mathbf{v} = (v_{1}, \ldots, v_{r})$. It is onto $\mathcal{BT}(\mathds{Q})$
and equivariant for the natural left actions of $\GL(r, K_{\infty})$ on $\Omega$ and $\mathcal{BT}(\mathds{R})$. 

Given $\mathbf{x} \in \mathcal{BT}(\mathds{Q})$, the inverse image
\begin{equation} \label{Eq.Admissible-affinoid-open-subspace-given-x}
	\Omega_{\mathbf{x}} \defeq \{ \boldsymbol{\omega} \in \Omega \mid \lambda(\boldsymbol{\omega}) = \mathbf{x} \}
\end{equation}
is an admissible affinoid open subspace of the analytic space $\Omega$, the structure of which is described in \cite{Gekeler-ta-1} Theorem 2.4. The spectral norm of a holomorphic function
$f$ on $\Omega_{\mathbf{x}}$ is defined by
\begin{equation}
	\lVert f \rVert_{\mathbf{x}} \defeq \sup_{\omega \in \Omega_{\mathbf{x}}} \lvert f(\boldsymbol{\omega}) \rvert.
\end{equation}
For the basic properties of all the objects listed and for further discussion, we refer to \cite{Gekeler-ta-1} and the references therein.

\section{Some background}

\subsection{The distinguished modular forms}

Given $k \in \mathds{N}_{0}$, the function $\alpha_{k} \colon \Omega \to C_{\infty}$, $\alpha_{k}(\boldsymbol{\omega}) = \alpha_{k}(\Lambda_{\boldsymbol{\omega}})$ (see 
\eqref{Eq.Exponential-function-associated-with-IF-lattice}) is holomorphic of weight $q^{k}-1$, that is, it satisfies 
\begin{equation}
	\alpha_{k}(\gamma \boldsymbol{\omega}) = \aut(\gamma, \boldsymbol{\omega})^{q^{k}-1} \alpha_{k}(\boldsymbol{\omega})
\end{equation}
for each $\gamma \in \Gamma = \GL(r,A)$, where
\begin{equation}
	\aut(\gamma, \boldsymbol{\omega}) = \sum_{1 \leq j \leq r} \gamma_{r,j} \omega_{j}.
\end{equation}
Together with the boundary condition it satisfies (\cite{Gekeler-ta-1}, 1.7, 1.8), this means that it is a modular form of weight $q^{k}-1$ (and type $0$) for $\Gamma$. Similarly, $\beta_{k} \colon \boldsymbol{\omega} \mapsto \beta_{k}(\Lambda_{\boldsymbol{\omega}})$ and for a given $a \in A$, the function ${}_{a}\ell_{k} \colon \boldsymbol{\omega} \mapsto {}_{a}\ell_{k}(\phi^{\boldsymbol{\omega}})$ are modular forms
of weight $q^{k}-1$ and 
\[
	E_{k} \colon \boldsymbol{\omega} \longmapsto E_{k}(\Lambda_{\boldsymbol{\omega}}) = \sideset{}{'}\sum_{a_{1}, \dots, a_{r} \in A} (a_{1}\omega_{1} + \dots + a_{r} \omega_{r})^{-k}
\]
is modular of weight $k$. By a well-known calculation, $\beta_{k} = -E_{q^{k}-1}$, where $E_{0}$ by definition equals ${-}1$. The forms $E_{k}$, $\alpha_{k}$, ${}_{a}\ell_{k}$ are the Eisenstein series,
para-Eisenstein series, coefficient forms of the respective weights. More specifically,
\begin{equation}
	g_{k} \defeq {}_{T}\ell_{k} \qquad (0 \leq k \leq r)
\end{equation}
are the \textbf{basic coefficient forms}, where $g_{0} = T$ and $g_{r}$ is also called the \textbf{Drinfeld discriminant form} of rank $r$. Specifying a Drinfeld $A$-module of rank $r$ over $C_{\infty}$
is the same as specifying the $r$-tuple $(g_{1}, \dots, g_{r})$ with the sole restriction that $\Delta = g_{r} \neq 0$. There are many well-known relations and recursions for these functions, which reflect
commutator relations in the non-commutative ring $C_{\infty}\{\{ \tau \}\}$. Some of these may be found, e.g., in \cite{Gekeler1988} Section 2.

\subsection{The fundamental domain} Let $\Lambda$ be an $A$-lattice of rank $r$ in $C_{\infty}$. A \textbf{successive minimum basis} (SMB) of $\Lambda$ is an ordered basis 
$\{\lambda_{1}, \dots, \lambda_{r} \}$ such that each $\lambda_{i}$ has minimal absolute value among $\Lambda \smallsetminus \sum_{1 \leq j < i} A\lambda_{j}$. Each lattice $\Lambda$ does possess an SMB
$\{ \lambda_{1}, \dots, \lambda_{r} \}$, and it has the crucial properties (\cite{Gekeler2019}, Proposition 3.1):
\subsubsection{}\label{Subsub.Orthogonal-basis-of-lattice}\stepcounter{equation}% 
The $\lambda_{i}$ are \textbf{orthogonal}, that is, for coefficients $a_{i} \in K$
\[
	\Big\lvert \sum_{1 \leq i \leq r} a_{i} \lambda_{i} \Big\rvert = \sup_{i} \lvert a_{i} \rvert \lvert \lambda_{i} \rvert \qquad \text{holds;}
\]
\subsubsection{}\stepcounter{equation}%
The sequence $\lvert \lambda_{1} \rvert \leq \lvert \lambda_{2} \rvert \leq \dots \leq \lvert \lambda_{r} \rvert$ is an invariant of $\Lambda$ (that is, independent of the choice of the SMB), and is 
called the \textbf{$A$-spectrum} $\spec_{A}(\Lambda)$ of $\Lambda$.

Similarly, if $\Lambda$ is a (finite or infinite) $\mathds{F}$-lattice, an $\mathds{F}$-SMB is an ordered basis $\{ \lambda_{1}, \lambda_{2}, \dots \}$ such that each $\lambda_{i}$ has minimal absolute value among
$\Lambda \setminus \sum_{1 \leq j < i} \lambda_{j}$. Again, such an $\mathds{F}$-SMB always exists, and the \textbf{$\mathds{F}$-spectrum} $\spec_{\mathds{F}}(\Lambda) = ( \lvert \lambda_{1} \rvert \leq \lvert \lambda_{2} \rvert \leq \dots )$ is an invariant of $\Lambda$. The $\mathds{F}$-lattice is called \textbf{separable} if $\spec_{\mathds{F}}(\Lambda)$ is multiplicity-free, i.e., $\lvert \lambda_{1} \rvert < \lvert \lambda_{2} \rvert < \dots$ and \textbf{inseparable} if not, \textbf{$k$-inseparable} if $\lvert \lambda_{k} \rvert = \lvert \lambda_{k+1} \rvert$.

Consider the subset
\begin{equation}
	\mathbf{F} = \mathbf{F}^{r} = \{ \boldsymbol{\omega} \in \Omega \mid \{ \omega_{r}, \omega_{r-1}, \dots, \omega_{1} \} \text{ is an SMB of } \Lambda_{\boldsymbol{\omega}} = \sum A \omega_{i} \}.
\end{equation}\stepcounter{subsubsection}%
(Note the inverted order, so $\lvert \omega_{r} \rvert \leq \lvert \omega_{r-1} \rvert \leq \dots \leq \lvert \omega_{1} \rvert$.) It is the set of $C_{\infty}$-points of an admissible open subspace, labelled also
by $\mathbf{F}$, of the analytic space $\Omega$. The fact that each $A$-lattice $\Lambda$ of rank $r$ has an SMB implies that each $\boldsymbol{\omega} \in \Omega$ is $\Gamma$-equivalent
with at least one element (and, in fact, at most finitely many elements) of $\mathbf{F}$. We thus call $\mathbf{F}$ the \textbf{fundamental domain} for $\Gamma$ on $\Omega$.

\subsubsection{}\stepcounter{equation}% 
The \textbf{standard apartment} $\mathcal{A}$ of $\mathcal{BT}$ is the full subcomplex defined by the standard torus of diagonal matrices of $\GL(r, K_{\infty})$, with set of vertices
\[
	\mathcal{A}(\mathds{Z}) = \{ [L_{\mathbf{n}}] \mid \mathbf{n} = (n_{1}, \dots, n_{r}) \in \mathds{Z}^{r} \},
\]	
where $[L_{\mathbf{n}}]$ is the similarity class of the $O_{\infty}$-lattice 
\[
	L_{\mathbf{n}} = (\pi^{n_{1}} O_{\infty}, \dots \pi^{n_{r}} O_{\infty}) \qquad \text{ in } V = K_{\infty}^{r}.
\]
We have $[L_{\mathbf{n}}] = [L_{\mathbf{n'}}]$ if and only if $\mathbf{n}' - \mathbf{n} = (n,n, \dots, n)$ for some $n \in \mathds{Z}$. The realization $\mathcal{A}(\mathds{R})$ (for which we briefly write
$\mathcal{A}$) is an euclidean affine space with translation group $\mathds{R}^{r}/\mathds{R}(1,1,\dots,1) \overset{\cong}{\to} \{ \mathbf{x} \in \mathds{R}^{r} \mid x_{r} = 0 \}$ and with the natural
choice of origin $\mathbf{0} = [L_{\mathbf{0}}]$. We use that isomorphism as a description for $\mathcal{A} = \mathcal{A}(\mathds{R})$. The choice of the standard Borel subgroup of 
upper triangular matrices determines the \textbf{standard Weyl chamber}
\begin{equation}
	\mathcal{W} = \{ \mathbf{x} \in \mathcal{A} \mid x_{i} \geq x_{i+1} \text{ for } 1 \leq i < r \}
\end{equation} \stepcounter{subsubsection}%
with  walls $\mathcal{W}_{i} = \{ \mathbf{x} \in \mathcal{W} \mid x_{i} = x_{i+1}\}$. $\mathcal{W}$ is a fundamental domain for the action of $\Gamma$ on $\mathcal{BT}$ in the classical sense: each 
$\mathbf{x} \in \mathcal{BT}(\mathds{R})$ has a unique representative modulo $\Gamma$ in $\mathcal{W}$.
\subsubsection{} \label{Subsub.The-set-WZ-is-monoid-freely-spanned}\stepcounter{equation}% 
The set $\mathcal{W}(\mathds{Z})$ of vertices is the monoid freely spanned by the vectors
\[
	\mathbf{n}_{i} = (1,1,\dots,1,0,\dots,0) \qquad \text{($i$ ones, $(r-i)$-zeroes)},
\]
where $1 \leq i < r$. The order relation \enquote{$\prec$} on $\mathcal{W}(\mathds{Z})$ is defined by
\[
	\mathbf{n} \prec \mathbf{n}' \Longleftrightarrow \mathbf{n}' - \mathbf{n} \text{ is a non-negative integral combination of the $\mathbf{n}_{i}$}.
\]
The relationship with $\Omega$ and its fundamental domain is as follows:
\begin{equation}
	\lambda(\boldsymbol{F}) = \mathcal{W}(\mathds{Q}), \qquad \lambda^{-1}(\mathcal{W}) = \mathbf{F}.
\end{equation} \stepcounter{subsubsection}%
We define 
\[
	\mathbf{F}_{i} \defeq \lambda^{-1}(\mathcal{W}_{i}) = \{ \boldsymbol{\omega} \in \mathbf{F} \mid \lvert \omega_{i} \rvert = \lvert \omega_{i+1} \rvert \}
\]
and for $\mathbf{x} \in \mathcal{W}(\mathds{Q})$
\[
	\mathbf{F}_{\mathbf{x}} \defeq \lambda^{-1}(\mathbf{x}) = \{ \boldsymbol{\omega} \in \mathbf{F} \mid \log \omega_{i} = x_{i}, 1 \leq i \leq r \},
\]
i.e., $\mathbf{F}_{\mathbf{x}} = \Omega_{\mathbf{x}}$ as in \eqref{Eq.Admissible-affinoid-open-subspace-given-x}.
\subsection{} The author takes the opportunity to correct an annoying sign error from \cite{Gekeler2017} 2.2, repeated in \cite{Gekeler-ta-1} 2.1. In order to fit with the rest of these papers, the definitions
given there for the lattice $L_{\mathbf{k}}$ must be 
\[
	L_{\mathbf{k}} = (\pi^{k_{1}} O_{\infty}, \dots, \pi^{k_{r}} O_{\infty}) \qquad \text{as in (1.2.4)}
\]
but not $L_{\mathbf{k}} = (T^{k_{1}} O_{\infty}, \dots, T^{k_{r}} O_{\infty}) = (\pi^{-k_{1}} O_{\infty}, \dots, \pi^{-k_{r}} O_{\infty})$ as erroneously stated in \cite{Gekeler2017} and \cite{Gekeler-ta-1}.
\subsection{} In view of the fundamental domain property, it suffices to study the behavior of a modular form $f$ on $\mathbf{F}$. We define the zero loci of $f$ in $\Omega$ and $\mathbf{F}$ by
\[
	\Omega(f) = \{ \boldsymbol{\omega} \in \Omega \mid f(\boldsymbol{\omega}) = 0 \}, \qquad \mathbf{F}(f) = \Omega(f) \cap \mathbf{F},
\]
and their images in $\mathcal{BT}$:
\[
	\mathcal{BT}(f) = \lambda(\Omega(f)), \qquad \mathcal{A}(f) = \mathcal{BT}(f) \cap \mathcal{A}, \qquad \mathcal{W}(f) = \mathcal{BT}(f) \cap \mathcal{W}.
\]
Then $\mathcal{BT}(f) = \Gamma \mathcal{W}(f)$, and $\mathcal{W}(f)$ provides a coarse picture of the zero locus of $f$ on $\mathcal{F}$. The following definition is motivated from properties of 
many of the distinguished modular forms.

\begin{Definition} \label{Definition.Simplicial-modular-form}
	Let $f$ be a modular form for $\Gamma = \GL(r,A)$. It is \textbf{simplicial} if $\mathcal{BT}(f)$ is the set of $\mathds{Q}$-points of a full simplicial subcomplex of $\mathcal{BT}$ which is everywhere of
	dimension $r-2$. In this case, we also use \enquote{$\mathcal{BT}(f)$} to designate that complex.
\end{Definition}

\begin{Example}
	\begin{enumerate}[wide, label=(\roman*)]
		\item Suppose that $r=2$. Then \enquote{$f$ simplicial} merely means that $\mathcal{BT}(f)$ is contained in $\mathcal{BT}(\mathds{Z})$, the set of vertices of $\mathcal{BT}$, or
		$\mathcal{W}(f) \subset \mathcal{W}(\mathds{Z})$. Here $\mathcal{W}$ is a half-line % 
		\begin{tikzpicture} %
				\draw[fill=black] (0,0) circle (1pt);  \draw[fill=black] (0.5,0) circle (1pt);  \draw[fill=black] (1,0) circle (1pt); \draw (0,0) -- (1.15,0);
		\end{tikzpicture} \dots 	In this case, all the distinguished modular forms are known to be simplicial: Eisenstein series $E_{k}$ \cite{Cornelissen1995} \cite{Gekeler1999-2}; para-Eisenstein series 
		$\alpha_{k}$ \cite{Gekeler1999-1}; coefficient forms ${}_{a}\ell_{k}$ \cite{Gekeler2011}.
		\item Let $r \geq 2$ be arbitrary. For an Eisenstein series $E_{k}$, $\mathcal{W}(E_{k}) = \mathcal{W}_{r-1}$ (\cite{Gekeler-ta-1} Theorem 4.5). Also, for the basic coefficient forms $g_{i}$ with 
		$1 \leq i < r$, $\mathcal{W}(g_{i}) = \mathcal{W}_{r-i}$ \cite{Gekeler-ta-1} Theorem 4.2). As $\mathcal{W}_{i}$ is a full subcomplex of $\mathcal{W}$ everywhere of dimension $r-2$, these forms 
		are simplicial. The same holds for $r=3$ and $\alpha_{k}$ if $k \leq 4$, as has been determined \enquote{by hand} in \cite{Gekeler-ta-1} 4.7. Later we will see (Theorem \ref{Theorem.Characterisation-of-certain-subsets-of-BTIQ}) that $\alpha_{k}$ is 
		always simplicial, as is the coefficient form ${}_{a}\ell_{k}$ if $k \leq \deg a$ (and presumably always).
	\end{enumerate}
\end{Example}

\subsection{}\label{Sub.Logarithm-of-invertible-holomorphic-function} Suppose for the moment that $u$ is an invertible (= nowhere vanishing) holomorphic function on $\Omega$. In \cite{Gekeler-ta-1} it is shown that 
$\lvert u \rvert$ is constant on the fibres $\Omega_{\mathbf{x}} = \lambda^{-1}(\mathbf{x})$ of the building map. Its logarithm $\log u = \log_{q} \lvert u \rvert$ may be regarded as a function on $\mathcal{BT}(\mathds{Q})$ 
and as such is affine, that is, it interpolates linearly in simplices. In \cite{Gekeler-ta-3} its \enquote{derivative} $P(u)$ is studied, and it is shown that $P(u)$ is a $\mathds{Z}$-valued harmonic $1$-cochain on $\mathcal{BT}$.
This means that $P(u)$ is a $\mathds{Z}$-valued function on the set $\mathbf{A}(\mathcal{BT})$ of \textbf{arrows} (= oriented $1$-simplices) of $\mathcal{BT}$ and satisfies certain relations
(A), (B), (C). It is defined by 
\begin{equation} \label{Eq.Definition-van-der-Put-transform}
	P(u)(( \mathbf{x}, \mathbf{y})) = \log_{q}( \lVert u \rVert_{\mathbf{y}} / \lVert u \rVert_{\mathbf{x}})
\end{equation} \stepcounter{subsubsection}%
and is called the \textbf{van der Put transform} of $u$. (A) is the trivial condition
\[
	\sum P(u)(e) = 0,
\]
if $e$ runs through the arrows of a closed path on $\mathcal{BT}$, (C) is irrelevant for the present purpose, and (B) is 
\[
	\sum_{e \in \mathbf{A}_{\mathbf{x},1}} P(u)(e) = 0,
\]
if $e = (\mathbf{x}, \mathbf{y})$ runs through the set $\mathbf{A}_{\mathbf{x},1}$ of arrows of type 1 emanating from a fixed vertex $\mathbf{x}$ of $\mathcal{BT}$. An arrow $e = (\mathbf{x}, \mathbf{y})$
is \textbf{of type 1} if it corresponds to $O_{\infty}$-lattices $L_{\mathbf{x}}$, $L_{\mathbf{y}}$ in $V = K_{\infty}^{r}$, where $L_{\mathbf{x}} \supset L_{\mathbf{y}} \supset \pi L_{\mathbf{x}}$ and
$\dim_{\mathds{F}}(L_{\mathbf{y}}/\pi L_{\mathbf{x}}) = 1$.

Now replace $u$ with an arbitrary simplicial modular form (possibly with zeroes) and define $P(u)$ still by the formula \eqref{Eq.Definition-van-der-Put-transform}. An adaptation of the proof of Theorem 2.6
in \cite{Gekeler-ta-1} yields that $\mathbf{x} \mapsto \log_{q} \lVert u \rVert_{\mathbf{x}}$ is still an affine function on $\mathcal{BT}(\mathds{Q})$ and that $P(u)$ again is $\mathds{Z}$-valued.
(Here the fact that $u$ doesn't vanish on $\lambda^{-1}(\overset{\circ}{\sigma})$ is crucial, where $\overset{\circ}{\sigma}$ is the interior of a simplex $\sigma$ of maximal dimension $r-1$.) Condition (A)
holds trivially also for the generalized definition of $P(u)$.

Suppose that $u$ is scaled such that $\lVert u \rVert_{\mathbf{x}} = 1$; then its reduction $\bar{u}$, a rational function on the canonical reduction $\bar{\Omega}_{\mathbf{x}}$ of $\Omega_{\mathbf{x}}$, is defined.
As is explained in \cite{Gekeler-ta-1} 2.3, $\bar{\Omega}_{\mathbf{x}}$ is isomorphic with $\mathds{P}^{r-1}/\mathds{F} \smallsetminus \bigcup \, H$, where $H$ runs through the finite set of hyperplanes defined
over $\mathds{F}$. Hence $\bar{u}$ determines a divisor $\operatorname{div}(\bar{u})$ on $\mathds{P}^{r-1}/\mathds{F}$.

\subsubsection{} \stepcounter{equation}% 
Let $N_{\boldsymbol{x}}(u) \in \mathds{N}_{0}$ be the degree of the $\bar{\Omega}_{\mathbf{x}}$-part of $\operatorname{div}(\bar{u})$, i.e., of the part coprime with $\bigcup \, H$. (If, e.g., 
$r=2$ then $\bar{\Omega}_{\mathbf{x}}$ is isomorphic with $\mathds{P}^{1}/\mathds{F} \smallsetminus \mathds{P}^{1}(\mathds{F})$ and $N_{\mathbf{x}}(u)$ is the number of zeroes, counted with multiplicity, of
$\bar{u}$ on $\mathds{P}^{1}/\mathds{F} \smallsetminus \mathds{P}^{1}(\mathds{F})$.) We call $N_{\mathbf{x}}(u)$ the \textbf{local inner degree} of $u$ at $\mathbf{x}$. Now the proof of (B) sketched in
\cite{Gekeler-ta-3} 2.6 shows that condition (B) for simplicial modular forms reads as follows.

\begin{Proposition} \label{Proposition.Relation-local-inner-degree-van-der-Put-transform}
	Let $u$ be a simplicial modular form and $\mathbf{x}$ a vertex of $\mathcal{BT}$. Then for the above defined quantities $P(u)(e)$ and $N_{\mathbf{x}}(u)$, the condition 
	\begin{equation} \tag{B'}
		\sum_{e \in \mathbf{A}_{\mathbf{x},1}} P(u)(e) = N_{\mathbf{x}}(u)
	\end{equation}
	holds.
\end{Proposition}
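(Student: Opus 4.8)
The plan is to express both sides of (B') through the divisor of the reduction $\bar{u}$ on $\mathds{P}^{r-1}$ and then to invoke that a principal divisor has total degree $0$. First I would scale $u$ by a constant so that $\lVert u \rVert_{\mathbf{x}} = 1$; this affects neither the quotients $\lVert u \rVert_{\mathbf{y}}/\lVert u \rVert_{\mathbf{x}}$ defining $P(u)(e)$ nor $\operatorname{div}(\bar{u})$, hence not $N_{\mathbf{x}}(u)$. With this normalization $\bar{u}$ is a well-defined rational function on $\bar{\Omega}_{\mathbf{x}} = \mathds{P}^{r-1}/\mathds{F} \smallsetminus \bigcup H$; here the simpliciality of $u$ enters, since it guarantees that $u$ does not vanish on $\lambda^{-1}(\overset{\circ}{\sigma})$ for a top-dimensional simplex $\sigma$, so that $\bar{u} \not\equiv 0$ and the reduction machinery of \cite{Gekeler-ta-1} 2.3 applies. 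Because $u$ is holomorphic on $\Omega$, the function $\bar{u}$ has no poles in the interior $\bar{\Omega}_{\mathbf{x}}$, so the $\bar{\Omega}_{\mathbf{x}}$-part $D_{\mathrm{in}}$ of $\operatorname{div}(\bar{u})$ is effective, and by definition $\deg D_{\mathrm{in}} = N_{\mathbf{x}}(u) \geq 0$.

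Next I would set up the bijection between $\mathbf{A}_{\mathbf{x},1}$ and the removed $\mathds{F}$-rational hyperplanes of $\bar{\Omega}_{\mathbf{x}}$. Since $\Omega$ sits in $\mathds{P}^{r-1}$ via the functionals $\mathbf{v} \mapsto \sum v_{i}\omega_{i}$, the boundary hyperplanes of $\bar{\Omega}_{\mathbf{x}}$ are indexed by the lines of $L_{\mathbf{x}}/\pi L_{\mathbf{x}}$, which are exactly the type-$1$ arrows $e = (\mathbf{x}, \mathbf{y})$; write $H_{e}$ for the hyperplane attached to $e$, so that $\Omega_{\mathbf{x}}$ is glued to $\Omega_{\mathbf{y}}$ across $H_{e}$. (Both sets have cardinality $(q^{r}-1)/(q-1)$.) Let $D_{\mathrm{bd}} = \sum_{H} \operatorname{ord}_{H}(\bar{u})\, H$ denote the boundary part of $\operatorname{div}(\bar{u})$, supported on the $H = H_{e}$.

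The heart of the argument is the local identity
\[
	P(u)(e) = -\operatorname{ord}_{H_{e}}(\bar{u}) \qquad (e \in \mathbf{A}_{\mathbf{x},1}),
\]
which is the content of the computation sketched in \cite{Gekeler-ta-3} 2.6 for invertible $u$: following $\lvert u \rvert$ along a path leaving $\Omega_{\mathbf{x}}$ through the face corresponding to $H_{e}$ into $\Omega_{\mathbf{y}}$, one matches the rate of growth or decay of $\lvert u \rvert$, i.e.\ $\log_{q}(\lVert u \rVert_{\mathbf{y}}/\lVert u \rVert_{\mathbf{x}})$, with the order of $\bar{u}$ along $H_{e}$. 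This step is purely local near the boundary, where $u$ is nonvanishing (again by simpliciality, the zeros lying in $D_{\mathrm{in}}$), so the calculation is insensitive to the interior zeros and carries over verbatim from the invertible case. I expect this edge-by-edge identification, together with pinning down the arrow–hyperplane correspondence and the sign, to be the main obstacle; the remainder is bookkeeping.

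Finally I would sum. A nonzero rational function on $\mathds{P}^{r-1}$ has a principal divisor of total degree $0$, whence
\[
	0 = \deg \operatorname{div}(\bar{u}) = \deg D_{\mathrm{in}} + \deg D_{\mathrm{bd}} = N_{\mathbf{x}}(u) + \sum_{H} \operatorname{ord}_{H}(\bar{u}),
\]
where each hyperplane contributes its order once, since $\deg H = 1$. Combining with the local identity gives
\[
	\sum_{e \in \mathbf{A}_{\mathbf{x},1}} P(u)(e) = -\sum_{H} \operatorname{ord}_{H}(\bar{u}) = N_{\mathbf{x}}(u),
\]
which is (B'). As a consistency check, for invertible $u$ one has $N_{\mathbf{x}}(u) = 0$ and the statement specializes to the harmonicity condition (B).
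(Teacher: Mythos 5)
Your overall route is the same as the paper's, which proves (B$'$) precisely by adapting the argument for (B) sketched in \cite{Gekeler-ta-3} 2.6: normalize so that $\lVert u \rVert_{\mathbf{x}} = 1$, view $\bar{u}$ as a rational function on $\mathds{P}^{r-1}/\mathds{F}$, split $\operatorname{div}(\bar{u})$ into the inner part of degree $N_{\mathbf{x}}(u)$ and a boundary part supported on the $\mathds{F}$-rational hyperplanes, match type-$1$ arrows with hyperplanes by duality, and use that a principal divisor on $\mathds{P}^{r-1}$ has degree $0$. All of that bookkeeping is fine (and your divisor computation automatically absorbs the weight-induced asymmetry among the hyperplanes coming from the normalization $\omega_{r}=1$). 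The gap is in your justification of the central identity $P(u)(e) = -\operatorname{ord}_{H_{e}}(\bar{u})$. You argue it is ``purely local near the boundary, where $u$ is nonvanishing by simpliciality, the zeros lying in $D_{\mathrm{in}}$,'' so that the invertible-case computation applies verbatim. For $r \geq 3$ this premise is false: simpliciality only forces the zero locus to lie over a subcomplex of dimension $r-2$, and such a subcomplex may contain edges emanating from $\mathbf{x}$; the fibres over such an open edge are exactly the part of $\Omega$ adjacent to the face $H_{e}$. The paper's own case study is a counterexample to your premise: for $r=3$ the edge $(\mathbf{0},\mathbf{p})$ is a $1$-simplex of $\mathcal{W}(2)$ (Theorem \ref{Theorem.Characterisation-of-certain-subsets-of-BTIQ}), so the simplicial form $\alpha_{2}$ vanishes over points of that open edge arbitrarily close to $\mathbf{0}$, i.e.\ in every neighborhood of the face dual to $e_{0} = (\mathbf{0},\mathbf{p})$. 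Moreover the identity genuinely requires some extra input: already in rank $2$ the holomorphic function $u = \omega - a$ with $q^{-1} < \lvert a \rvert < 1$ has $\bar{u} = \bar{\omega}$, hence $\operatorname{ord}_{H_{e}}(\bar{u}) = 1$ for the arrow $e$ toward $0$, while $P(u)(e) = \log_{q}\lvert a \rvert \in (-1,0)$. Note also that $P(u)(e)$ is by definition a difference of spectral norms at the \emph{two} endpoints of $e$, so it is not a priori a local quantity at $\mathbf{x}$ at all; ``locality'' is exactly what has to be earned.

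What earns it --- and what simpliciality is actually used for --- is the affineness of $\mathbf{x} \mapsto \log_{q} \lVert u \rVert_{\mathbf{x}}$ on closed simplices of $\mathcal{BT}$, recorded in \ref{Sub.Logarithm-of-invertible-holomorphic-function} as an adaptation of \cite{Gekeler-ta-1} Theorem 2.6; its input is that $u$ does not vanish on $\lambda^{-1}(\overset{\circ}{\sigma})$ for \emph{maximal} ($(r-1)$-dimensional) simplices $\sigma$, not any nonvanishing near the boundary faces. Affineness along the closed edge identifies the endpoint difference $P(u)(e)$ with the one-sided derivative of $\log_{q}\lVert u \rVert$ at $\mathbf{x}$ toward $\mathbf{y}$, and it is this derivative that the reduction computation of \cite{Gekeler-ta-3} 2.6 equates with $-\operatorname{ord}_{H_{e}}(\bar{u})$; your rank-$2$ example $u = \omega - a$ is precisely a situation where affineness fails and the endpoint difference no longer agrees with the initial slope. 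So your proof should be repaired by replacing the ``nonvanishing near the boundary'' argument with an appeal to the affineness statement of \ref{Sub.Logarithm-of-invertible-holomorphic-function}; with that substitution the remainder of your argument goes through and coincides with the intended proof.
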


In contrast with (B), the condition (C) mentioned in \ref{Sub.Logarithm-of-invertible-holomorphic-function} has no reasonable generalization to functions with zeroes, and is therefore omitted. Some non-trivial examples for (B') in the case where $r=2$
are given in \cite{Gekeler1999-1} Section 8 and \cite{Gekeler2011} Section 6. We will work out an example with $r=3$ in Section 4.

\subsection{} The investigation of zeroes of modular forms is governed by the following two basic principles.
\subsubsection{Vanishing principle}\label{Sub.Vanishing-principle} Let $f$ be a holomorphic function on $\Omega_{\mathbf{x}}$ for some $\mathbf{x} \in \mathcal{BT}(\mathds{Q})$. If $\lvert f \rvert$ is non-constant on $\Omega_{\mathbf{x}}$
then $f$ has a zero on $\Omega_{\mathbf{x}}$. 

This is a way of stating Theorem 2.4 in \cite{Gekeler-ta-1}.
\subsubsection{Spectral principle}\label{Subsub.Spectral-principle} Let a (finite or infinite) $\mathds{F}$-lattice $\Lambda$ be given. If $\alpha_{k}(\Lambda) = 0$ for some $k \in \mathds{N}$ then $\Lambda$ is $k$-inseparable (i.e., 
$\lvert \lambda_{k} \rvert = \lvert \lambda_{k+1} \rvert$ for some $\mathds{F}$-SMB $\{ \lambda_{1}, \lambda_{2}, \dots \}$ of $\Lambda$). Conversely, let $S$ be a finite or infinite subset of $\mathds{N}$.
If $\Lambda$ is $k$-inseparable for each $k \in S$ then there exists an isospectral lattice $\Lambda'$ (that is, $\spec_{\mathds{F}}(\Lambda) = \spec_{\mathds{F}}(\Lambda')$) such that
$\alpha_{k}(\Lambda') = 0$ for each $k \in S$. 

This is Proposition 1.11 of \cite{Gekeler-ta-1}.

These principles will be used to conclude (under some assumptions) the existence of zeroes of a modular form near a given $\boldsymbol{\omega} \in \Omega$.

\subsection{} Another important tool in our study are Moore determinants. Given $\omega_{1}, \dots, \omega_{n} \in C_{\infty}$, the \textbf{Moore determinant} $M(\omega_{1}, \dots, \omega_{r})$ is the 
determinant of the $(n\times n)$-matrix
\[
	\begin{pmatrix}
		\omega_{1}	& \omega_{1}^{q}	& \dots 		& \omega_{1}^{q^{n-1}} \\
		\vdots			& \vdots					& \ddots	& \vdots								\\
		\omega_{n}	& \omega_{n}^{q}	& \dots		& \omega_{n}^{q^{n-1}} 
	\end{pmatrix}.
\]
The relevant properties are (see \cite{Goss1996} Section 1, 1.3 or \cite{Ore1933}):
\begin{equation}
	M(\omega_{1}, \dots, \omega_{n}) \neq 0 \Longleftrightarrow \{ \omega_{1}, \dots, \omega_{n} \} \text{ is linearly independent over $\mathds{F}$}.
\end{equation}
Assume this is the case, and let $\Lambda$ be the $\mathds{F}$-lattice generated by the $\omega_{i}$, with exponential function $e_{\Lambda}$. Then
\begin{equation}
	e_{\Lambda}(X) = (-1)^{n} \frac{M(\omega_{1}, \dots, \omega_{n}, X)}{M(\omega_{1}, \dots, \omega_{n})^{q}}.
\end{equation}
That is, letting $M^{(i)}(\omega_{1}, \dots, \omega_{n})$ be the $n \times n$-Minor corresponding to $X^{q^{i}}$ ($0 \leq i \leq n$) in the $(n+1)\times(n+1)$-matrix for $M(\omega_{1}, \dots, \omega_{n},X)$, then
$M^{(0)}(\omega_{1}, \dots, \omega_{n}) = M(\omega_{1}, \dots, \omega_{n})^{q}$, $M^{(n)}(\omega_{1}, \dots, \omega_{n}) = M(\omega_{1}, \dots, \omega_{n})$, and for the coefficients of $e_{\Lambda}$,
\begin{equation} \label{Eq.Coefficients-of-Moore-determinant}
	\alpha_{i}(\Lambda) = (-1)^{i} \frac{M^{(i)}(\omega_{1}, \dots, \omega_{n})}{M^{(0)}(\omega_{1}, \dots, \omega_{n})}
\end{equation}
holds. As a special case we find
\begin{equation}
	\alpha_{n}(\Lambda) = \sideset{}{'}\prod_{\lambda \in \Lambda} \lambda = (-1)^{n} M(\omega_{1}, \dots, \omega_{n})^{q-1}.
\end{equation}
\subsection{Special Eisenstein series}\label{Sub.Special-Eisenstein-series} From an analytical point of view, the most simple and easy-to-handle modular forms for $\Gamma$ (and in fact, the first ones seriously 
studied \cite{Goss1980}) are the Eisenstein series, in particular the special ones (i.e., with weight of shape $q^{j}-1$). The first $r$ special Eisenstein series $E_{q^{j}-1}$ ($1 \leq j \leq r$) generate the ring 
$\Mod^{0}(\Gamma)$ of modular forms of type 0. It has been shown in \cite{Gekeler-ta-1} Theorem 4.5 that
\begin{equation}
	\begin{split}
		&\mathcal{W}(E_{k}) = \mathcal{W}_{r-1}(\mathds{Q}) \qquad (0 < k \equiv 0 \pmod{q-1}; \quad \text{in the sequel}\\
		&\text{we abuse notation and often write briefly $\mathcal{W}(E_{k}) = \mathcal{W}_{r-1}$, etc.)}
	\end{split}
\end{equation}
and that
\begin{equation} \label{Eq.Smooth-F-for-all-natual-numbers}
	\mathbf{F}(E_{q^{j}-1}) \text{ is smooth for each $j \in \mathds{N}$}.
\end{equation}
Moreover, for each non-empty subset $S$ of $\{1,2,\dots,r-1\}$ the $\mathbf{F}(E_{q^{j}-1})$ with $j \in S$ intersect transversally, and the analytic space $\bigcap_{j \in S} \mathbf{F}(E_{q^{j}-1})$ is smooth of
dimension $r-1-\#(S)$.

\subsection{Basic coefficient forms}\label{Sub.Basic-coefficient-forms} Here we have results of similar strength. Theorem 4.2 of \cite{Gekeler-ta-1} states that
\begin{align}
			\mathcal{W}(g_{i})	&	= \mathcal{W}_{r-i} 
	\intertext{and: For each $\varnothing \neq S \subset \{1,2,\dots,r-1\}$,}  
			\lambda \Big( \bigcap_{i \in S} \mathbf{F}(g_{i}) \Big) 	&= \bigcap_{i \in S} \mathcal{W}_{r-i} \label{Eq.Intersection-of-lambda-Fgis}
\end{align}
and the analytic space $\bigcap_{i \in S} \mathbf{F}(g_{i})$ is smooth of dimension $r-1-\#(S)$. 

Furthermore, the growth (or rather decay) of $\lVert g_{i} \rVert_{\mathbf{x}}$ for $\mathbf{x} \in \mathcal{W}$ 
\enquote{moving to infinity} is given in \cite{Gekeler2017} Corollary 4.16 in conjunction with Proposition 4.10.

\begin{Remark}
	Note that the smoothness statements in \eqref{Eq.Smooth-F-for-all-natual-numbers} and \eqref{Eq.Intersection-of-lambda-Fgis} immediately turn over to the same statements for the $\Omega(f)$, where $f = E_{q^{j}-1}$ or $f = g_{i}$. Note also that $\lVert E_{k} \rVert_{\mathbf{x}}$
	is constant equal to 1 on $\mathcal{W}$, and in fact $\lvert E_{k}(\boldsymbol{\omega}) \rvert = \lVert E_{k} \rVert_{\mathbf{x}} = 1$ for $\boldsymbol{\omega} \in \mathbf{F}_{\mathbf{x}}$ if 
	$\mathbf{x} \notin \mathcal{W}(E_{k}) = \mathcal{W}_{r-1}$.
\end{Remark}

\section{Para-Eisenstein series}

\subsection{} Are there results similar to \ref{Sub.Special-Eisenstein-series} or \ref{Sub.Basic-coefficient-forms} for the para-Eisenstein series $\alpha_{k}$? First, Theorem 4.8 of \cite{Cornelissen1995} states that
\subsubsection{}\label{Subsub.Characterisation-of-BTIQ} $\mathbf{x} \in \mathcal{BT}(\mathds{Q})$ belongs to $\mathcal{BT}(\alpha_{k})$ if and only if for one (and thus for each) element 
$\boldsymbol{\omega} \in \Omega_{\mathbf{x}} = \lambda^{-1}(\mathbf{x})$ the lattice $\Lambda_{\boldsymbol{\omega}}$ is $k$-ins (brief for $k$-inseparable); and:
\subsubsection{}\label{Subsub.For-all-1-r-1-analytich-space-smooth} For each $\varnothing \neq S \subset \{ 1,2, \dots, r-1 \}$, the analytic space $\bigcap_{i \in S} \Omega(\alpha_{i})$ is smooth of dimension $r-1-\#(S)$. 

The $k$-inseparability of $\Lambda_{\boldsymbol{\omega}}$ depends only on the spectrum of $\Lambda_{\boldsymbol{\omega}}$, hence on $\lambda(\boldsymbol{\omega})$. The non-trivial part in \ref{Subsub.Characterisation-of-BTIQ} is
to show that $k$-inseparability implies the existence of a zero $\alpha_{k}$ in $\Omega_{\boldsymbol{x}}$. In contrast with \ref{Sub.Special-Eisenstein-series} and \ref{Sub.Basic-coefficient-forms}, $\mathcal{W}(\alpha_{k})$ has no simple description, 
and it is not immediate that it is the set of $\mathds{Q}$-points of a simplicial complex. To this question, see Theorem \ref{Theorem.Characterisation-of-certain-subsets-of-BTIQ}. First we show the following result, which could (and should!) 
have been shown in \cite{Gekeler-ta-1}.

\begin{Theorem} \label{Theorem.Smoothness-of-certain-vanishing-locei}
	For each $k \in \mathds{N}$ the vanishing locus $\Omega(\alpha_{k})$ is smooth.
\end{Theorem}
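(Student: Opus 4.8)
The claim is that $\Omega(\alpha_k)$ is smooth as an analytic space, for every $k \in \mathds{N}$. My plan is to work locally near a point $\boldsymbol{\omega}_0 \in \Omega(\alpha_k)$ and produce an explicit formula for $\alpha_k$ (or a unit multiple of it) near $\boldsymbol{\omega}_0$, then exhibit a partial derivative that is nonzero there, so the Jacobian criterion gives smoothness. The natural tool is the Moore-determinant expression \eqref{Eq.Coefficients-of-Moore-determinant}, which writes $\alpha_k = \alpha_k(\Lambda_{\boldsymbol{\omega}})$ as a ratio of minors of the Moore matrix built from the lattice generators. Since the denominator $M^{(0)} = M(\omega_1,\dots,\omega_n)^q$ is a nonvanishing unit on $\Omega$ (the $\omega_i$ being $K_\infty$-linearly independent, hence a fortiori $\mathds{F}$-linearly independent), smoothness of $\Omega(\alpha_k)$ is equivalent to smoothness of the vanishing locus of the numerator $M^{(k)}(\omega_1,\dots,\omega_r)$, the minor obtained by deleting the $(k+1)$-st column from the full $(r+1)$-row Moore setup.

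\emph{Computing the relevant derivative.} The heart of the argument should be a derivative computation. Moore determinants have very clean behavior under the Frobenius-twisted structure: expanding $M^{(k)}$ along the row corresponding to one coordinate $\omega_j$ expresses $\alpha_k$ in terms of lower Moore minors in the remaining variables, and these cofactors are themselves (up to sign and $q$-power twists) Moore determinants of subfamilies, hence nonzero by the independence criterion \eqref{Eq.Coefficients-of-Moore-determinant}. Concretely, I would differentiate $\alpha_k(\boldsymbol{\omega})$ with respect to a suitable $\omega_j$ and show the partial derivative is a product/ratio of nonvanishing Moore determinants times a factor that does not vanish at $\boldsymbol{\omega}_0$. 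Because the coordinates enter $M^{(k)}$ with various $q$-power exponents $\omega_j^{q^i}$, ordinary differentiation kills all the $q$-power terms except the linear one $\omega_j^{q^0} = \omega_j$, which is a decisive simplification: the derivative of $M^{(k)}$ with respect to $\omega_j$ is (up to sign) exactly the Moore minor obtained by also deleting the first column, and such a minor is again a nonzero Moore determinant on all of $\Omega$. This shows $\partial \alpha_k / \partial \omega_j$ is nowhere zero, so the gradient of $\alpha_k$ never vanishes, whence $\Omega(\alpha_k)$ is smooth of codimension one.

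\emph{Expected main obstacle.} The technical crux is bookkeeping the minor combinatorics and the $q$-power twists so that the cofactor expansion cleanly identifies $\partial \alpha_k / \partial \omega_j$ with a single nonvanishing Moore determinant rather than a messy alternating sum that could conceivably cancel. The reason optimism is warranted is precisely that differentiation annihilates every $\omega_j^{q^i}$ with $i \geq 1$: in characteristic $p$, $\frac{d}{dX} X^{q^i} = 0$ for $i \geq 1$, so only the entries in the first column (the $\omega_j^{q^0} = \omega_j$ column) survive differentiation. Thus $\frac{\partial}{\partial \omega_j} M^{(k)}(\omega_1,\dots,\omega_r)$ equals, up to sign, the minor of $M^{(k)}$ with both the first column and the $\omega_j$-row removed — a Moore determinant of the remaining $r-1$ variables, which is nonzero by the $\mathds{F}$-independence of any subfamily. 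I would need to verify the sign and that the surviving minor is genuinely of Moore type for at least one choice of $j$ (say $j=r$), but this reduces to a direct identification.

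\emph{Assembling the conclusion.} Once a nonvanishing partial derivative is secured at every point of $\Omega(\alpha_k)$, the implicit function theorem in the rigid-analytic setting (applicable on the affinoid pieces $\Omega_{\mathbf{x}}$, which cover $\Omega$ and whose structure is recorded in \cite{Gekeler-ta-1} Theorem 2.4) yields that $\Omega(\alpha_k)$ is a smooth analytic hypersurface. This parallels the known smoothness of $\mathbf{F}(E_{q^j-1})$ from \eqref{Eq.Smooth-F-for-all-natual-numbers}, and indeed the phrasing in the excerpt — ``as is the case for the special $o$-Eisenstein series $E_{q^k-1}$'' — suggests the author has in mind an argument of exactly this flavor, exploiting the Moore-determinant description that both $\alpha_k$ and $E_{q^k-1}$ enjoy.
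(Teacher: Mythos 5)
Your proposal rests on a false premise: the Moore-determinant identity \eqref{Eq.Coefficients-of-Moore-determinant} computes the coefficients $\alpha_i(\Lambda)$ only for a \emph{finite} $\mathds{F}$-lattice $\Lambda$ spanned by finitely many $\mathds{F}$-independent elements. The lattice $\Lambda_{\boldsymbol{\omega}} = \sum_i A\omega_i$ relevant to the modular form $\alpha_k$ is an $A$-lattice of rank $r$, hence \emph{infinite-dimensional} over $\mathds{F}$ (its $\mathds{F}$-generators are all the $T^s\omega_i$), and its exponential $e_{\boldsymbol{\omega}}$ is an entire function, not a polynomial of degree $q^r$. Consequently $\alpha_k(\boldsymbol{\omega})$ is \emph{not} equal to $\pm M^{(k)}(\omega_1,\dots,\omega_r)/M^{(0)}(\omega_1,\dots,\omega_r)$; that ratio is the $k$-th coefficient of the exponential of the $r$-dimensional $\mathds{F}$-span of $\omega_1,\dots,\omega_r$, a different (and much smaller) lattice. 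The mismatch is already visible in the statement of the theorem: it concerns all $k \in \mathds{N}$, including $k > r$, where the finite Moore setup has no $(k+1)$-st column to delete and the corresponding coefficient of the finite lattice is identically zero. So there is no closed formula of the kind your whole argument differentiates.

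There is a second, independent problem even in the finite-lattice setting where your formula would be legitimate: differentiating $M^{(k)}$ with respect to $\omega_j$ does kill all entries $\omega_j^{q^i}$ with $i \geq 1$, but the surviving cofactor (row $j$ and the $q^0$-column deleted) is \emph{not} a full Moore determinant unless $k$ is the top index; for $0 < k < n$ it is the intermediate minor $M^{(k-1)}$ of the twisted family $(\omega_l^q)_{l \neq j}$, i.e.\ up to a unit a lower para-Eisenstein coefficient of a sublattice, and such minors certainly do vanish at points of $\Omega$ --- their vanishing loci are exactly the objects under study. So the claimed nowhere-vanishing gradient does not follow. The paper's proof avoids both issues: it never uses a closed formula for $\alpha_k$, but instead differentiates the recursion $[k]\alpha_k = \sum_{1 \leq i \leq r} g_i\,\alpha_{k-i}^{q^i}$ (obtained from $e_{\boldsymbol{\omega}}(Tz) = \phi_T^{\boldsymbol{\omega}}(e_{\boldsymbol{\omega}}(z))$), invokes Euler's formula and the nonvanishing of $\det(\partial g_i/\partial \omega_j)$ (étaleness, \cite{Gekeler-ta-1} Proposition 3.14) to conclude that a singular point of $\Omega(\alpha_k)$ would force $\alpha_{k-1} = \dots = \alpha_{k-r} = 0$ there, and then propagates this through the recursion to make \emph{all} higher coefficients of $e_{\boldsymbol{\omega}}$ vanish --- absurd for an entire exponential function. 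If you want to salvage your approach, you would in effect have to reconstruct this recursive propagation, since a one-shot nonvanishing-derivative argument is not available.
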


\begin{proof}
	\begin{enumerate}[wide, label=(\roman*)]
		\item We may assume $k \geq r$, as the result for $k < r$ is in \ref{Subsub.For-all-1-r-1-analytich-space-smooth}.
		\item Let $\Omega^{*} \subset C_{\infty}^{r}$ be the cone above $\Omega$, i.e., $\Omega^{*} = \{ (\omega_{1}, \ldots, \omega_{r}) \mid (\omega_{1}: \ldots : \omega_{r}) \in \Omega\}$. We regard a modular form
		$f$ of weight $k$ as a $\Gamma$-invariant homogeneous function of weight ${-}k$ on $\Omega^{*}$, that is 
		\[
			f( c\omega_{1}, \ldots, c\omega_{r} ) = c^{-k} f(\omega_{1}, \dots, \omega_{r}), \qquad c \in C_{\infty}^{*}.
		\]
		Thus in particular, $\frac{\partial}{\partial \omega_{r}}f$ is defined.
		\item From the characteristic equation 
		\[
			 e_{\boldsymbol{\omega}}(Tz) = \phi_{T}^{\boldsymbol{\omega}}\big(e_{\boldsymbol{\omega}}(z) \big)
		\]
		we get by comparing coefficients
		\[
			T^{q^{k}} \alpha_{k}(\boldsymbol{\omega}) = \sum_{0 \leq i \leq k} g_{i}(\boldsymbol{\omega}) \alpha_{k-i}^{q^{i}}(\boldsymbol{\omega})
		\]
		(where $g_{0} = T$ and $g_{i} = 0$ for $i > r$). That is,
		\begin{equation} \label{Eq.Characterisation-of-Tqk-T-alpha-k}
			[k] \alpha_{k} = \sum_{1 \leq i \leq r} g_{i} \alpha_{k-i}^{q^{i}},
		\end{equation}
		where $[k]$ is short for $T^{q^{k}}-T$.
		\item If $D$ is one of the operators $\frac{\partial}{\partial \omega_{r}}$ ($1 \leq i \leq r$), then
		\[
			[k] D(\alpha_{k}) = \sum D(g_{i}) \alpha_{k-i}^{q^{i}}
		\]
		and for $\mathbf{D} = (\frac{\partial}{\partial \omega_{1}}, \dots, \frac{\partial}{\partial \omega_{r}})^{t}$,
		\begin{equation} \label{Eq.Jacobian-of-Operators}
			[k]\mathbf{D}(\alpha_{k}) = \left( \frac{\partial g_{i}}{\partial \omega_{j}} \right)_{1 \leq i,j \leq r} (\alpha_{k-1}^{q}, \dots, \alpha_{k-r}^{q^{r}})^{t}.
		\end{equation}
		Here $(~)^{t}$ means transpose.
		\item Suppose that $\boldsymbol{\omega} \in \Omega$ is such that $\alpha_{k}(\boldsymbol{\omega}) = 0$ and $\frac{\partial}{\partial \omega_{j}} \alpha_{k}(\boldsymbol{\omega}) = 0$ for $1 \leq j < r$.
		By Euler's formula,
		\[
			\sum_{1 \leq j \leq r} \omega_{j} \frac{\partial}{\partial \omega_{j}} \alpha_{k}(\boldsymbol{\omega}) = (1- q^{k}) \alpha_{k}(\boldsymbol{\omega}),
		\]
		hence also $\frac{\partial}{\partial \omega_{r}} \alpha_{k}(\boldsymbol{\omega}) = 0$. That is, the left hand side of \eqref{Eq.Jacobian-of-Operators} vanishes.
		\item It has been shown in \cite{Gekeler-ta-1} Proposition 3.14 that the determinant $\det ( \frac{\partial g_{i}}{\partial \mu_{j}})_{1 \leq i,j \leq r}$ vanishes nowhere, where $\mu_{1}, \dots, \mu_{r}$ 
		are coordinates on the space $N^{r,*}$ in \cite{Gekeler-ta-1} 3.7. Now the canonical map
		\begin{align*}
			\Omega^{*} = \Omega^{r,*}											&\longrightarrow N^{r,*} = \Gamma(T) \setminus \Omega^{r,*}, \\
									(\omega_{1}, \dots, \omega_{r})			&\longmapsto (\mu_{1}, \dots, \mu_{r})
		\end{align*}
		is étale, thus $\det (\frac{\partial g_{i}}{\partial \omega_{j}})_{1 \leq i,j \leq r} \neq 0$, too. (The $\mu_{i}$ are the functions $\mu_{i}(\boldsymbol{\omega}) = e_{\boldsymbol{\omega}}(\omega_{i}/T)$,
		and $\Gamma(T)$ is the full congruence subgroup of level $T$.)
		\item We conclude from \eqref{Eq.Jacobian-of-Operators} that $\alpha_{k-1}(\boldsymbol{\omega}) = \dots = \alpha_{k-r}(\boldsymbol{\omega}) = 0$. Now the recursion \eqref{Eq.Characterisation-of-Tqk-T-alpha-k} applied to $\alpha_{k+1}$, $\alpha_{k+2}$, \dots implies that
		all these vanish at $\boldsymbol{\omega}$, which is absurd. Hence an $\boldsymbol{\omega}$ as in (v) cannot exist.
	\end{enumerate}
\end{proof}

\subsection{} The starting point for our investigation of $\Omega(\alpha_{k})$ and $\mathcal{BT}(\alpha_{k})$ is \ref{Subsub.Characterisation-of-BTIQ}. As the property of $\Lambda_{\boldsymbol{\omega}}$ to be $k$-ins depends only on 
$\mathbf{x} = \lambda(\boldsymbol{\omega})$, we define $\mathbf{x} \in \mathcal{BT}(\mathds{Q})$ to be $k$-ins if this is the case for one (= for all) $\boldsymbol{\omega} \in \Omega_{\mathbf{x}}$ and
write $\mathcal{BT}(k)$ respectively $\mathcal{A}(k)$ respectively $\mathcal{W}(k)$ for the set of those $\mathbf{x} \in \mathcal{BT}(\mathds{Q})$, $\mathcal{A}(\mathds{Q})$, $\mathcal{W}(\mathds{Q})$ 
which are $k$-ins.

\subsection{}\label{Sub.x-in-WIQ} In the following, we assume without restriction that $\mathbf{x} \in \mathcal{W}(\mathds{Q})$. Let $e_{1} = (1, 0,\dots,0)$, \dots, $e_{r} = (0,\dots,0,1)$ be the standard basis vectors of
$V = K_{\infty}^{r}$ and $\kappa_{\boldsymbol{\omega}}$ the isomorphism of $K_{\infty}$-vector spaces 
\begin{equation} \label{Eq.Isomorphism-of-K-infty-vector-spaces}
	\begin{split}
		\kappa_{\boldsymbol{\omega}} \colon \Lambda_{\boldsymbol{\omega}} \otimes K_{\infty}	&\overset{\cong}{\longrightarrow} V, \\
																																										\omega_{i}	&\longmapsto e_{i}.
	\end{split}
\end{equation}
It induces the norm $\nu_{\mathbf{x}} \defeq \nu_{\boldsymbol{\omega}}$ on $V$, $\nu_{\boldsymbol{\omega}}(\mathbf{v}) = \lvert \sum \omega_{i} v_{i} \rvert$ for $\mathbf{v} = (v_{1}, \dots, v_{r}) \in V$,
which depends only on $\lambda(\boldsymbol{\omega}) = \mathbf{x} = (x_{1}, \dots, x_{r})$, where $x_{i} = \log \omega_{i}$. An $\mathds{F}$-SMB of $\Lambda_{\boldsymbol{\omega}}$ may be constructed
by arranging the subset
\[
	B_{\boldsymbol{\omega}} \defeq \{ T^{s} \omega_{i} \mid s \in \mathds{N}_{0}, 1 \leq i \leq r \}
\]
of $\Lambda_{\boldsymbol{\omega}}$ in a suitable order. Let $B \defeq \kappa_{\boldsymbol{\omega}}(B_{\boldsymbol{\omega}}) = \{ T^{s}e_{i}\}$ be the corresponding subset of $\sum Ae_{i} \subseteq V$.
We define the following total order on $B$ (depending on $\mathbf{x}$):
\begin{equation} \label{Eq.Total-oder-on-B}
	T^{s}e_{i} \leq T^{s'}e_{i'} \vcentcolon \Longleftrightarrow \nu_{\mathbf{x}}(T^{s}e_{i}) < \nu_{\mathbf{x}}(T^{s'}_{e_{i'}}) \quad \text{or} \quad (\nu_{\mathbf{x}}(T^{s}e_{i}) = \nu_{\mathbf{x}}(T^{s'}e_{i'}) \text{ and } i > i')
\end{equation}
and arrange $B = \{ \lambda_{1}, \lambda_{2}, \dots \}$ according to this order. We call $(\lambda_{1}, \lambda_{2}, \dots)$ the \textbf{characteristic sequence of $\mathbf{x}$ in $V$}. It is a specific 
$\mathds{F}$-SMB on $(\sum Ae_{i}, \nu_{\mathbf{x}})$, whose pre-image under $\kappa_{\boldsymbol{\omega}}$ is an $\mathds{F}$-SMB on $\Lambda_{\boldsymbol{\omega}}$ also called the
\textbf{characteristic sequence of $\Lambda_{\boldsymbol{\omega}}$}. Hence $\mathbf{x}$ is $k$-ins. if and only if $\nu_{\mathbf{x}}(\lambda_{k}) = \nu_{\mathbf{x}}(\lambda_{k+1})$. Note that 
\begin{align*}
	\lambda_{1}		&= e_{r} \quad (\text{as $\mathbf{x} \in \mathcal{W}$}), \\
	\lambda_{2}	&= \begin{cases} e_{r-1},	&\text{if $x_{r-1} < 1$ (i.e., $\lvert \omega_{r-1} \rvert < q \lvert \omega_{r} \vert = q$)}, \\ Te_{r},	&\text{if $x_{r-1} \geq 1$, etc.} \end{cases}
\end{align*}
\subsection{} Since we shall make heavy use of the construction, we write it down in detail for $\mathbf{x} = \mathbf{n} \in \mathcal{W}(\mathds{Z})$. As in \ref{Subsub.The-set-WZ-is-monoid-freely-spanned}, let 
$\{ \mathbf{n}_{1}, \dots, \mathbf{n}_{r-1}\}$ be the standard basis of the monoid $\mathcal{W}(\mathds{Z})$, where $\mathbf{n}_{i} = (1,\dots,1,0,\dots,0)$ with $i$ 1's and $r-i$ zeroes. Each 
$\mathbf{n} \in \mathcal{W}(\mathds{Z})$ may uniquely be written
\begin{equation} \label{Eq.Linear-combination-of-element-of-WIZ}
	\mathbf{n} = (n_{1} - n_{2}) \mathbf{n}_{1} + (n_{2}- n_{3}) \mathbf{n}_{2} + \dots + (n_{r-1} - n_{r}) \mathbf{n}_{r-1}.
\end{equation}
(Note that $x_{r} = 0$ for all $\mathbf{x} \in \mathcal{W}$; nevertheless it is useful to dispose of this redundant quantity.) That is, the combinatorial distance $d(\mathbf{n}, \mathbf{0})$ from the origin
$\mathbf{0}$ equals 
\begin{equation}
	d(\mathbf{n}, \mathbf{0}) = (n_{1} - n_{2}) + \dots + (n_{r-1} - n_{r}) = n_{1}.
\end{equation}
We further define
\begin{multline} \label{Eq.Recursion-for-hes}
	h_{1} = h_{1}(\mathbf{n}) =	n_{r-1} - n_{r}, \qquad h_{2}	= h_{1} + 2(n_{r-2} - n_{r-1}), \ldots \\ 	h_{r-1}	= h_{r-2} + (r-1)(n_{1}-n_{2}), \qquad h \defeq  h_{r-1}.
\end{multline}
The first $h$ of the vectors $\lambda_{j}$ in \eqref{Eq.Total-oder-on-B} are
\begin{align} 
	T^{s}e_{r}																										&\quad (0 \leq s < h_{1}; \text{ i.e., $h_{1} = (n_{r-1} - n_{r})$ many}); \label{Eq.Powers-of-T} \\
	T^{n_{r-1}+s}e_{r}	, T^{s}e_{r-1}															&\quad (0 \leq s < n_{r-2} - n_{r-1}, 2(n_{r-2}-n_{r-1}) \text{ many}); \nonumber \\
	T^{n_{r-2}+s}e_{r}, T^{n_{r-2}-n_{r-1}+s}e_{r-1}, T^{s}e_{r-2}	&\quad (0 \leq s < n_{r-3} - n_{r-2}, 3(n_{r-3} - n_{r-2}) \text{ many}); \nonumber \\
	\vdots																											&\quad \nonumber \\
	T^{n_{2}+s}e_{r}, T^{n_{2}-n_{r-1}+s}e_{r-1}, \dots, T^{s}e_{2}	&\quad (0 \leq s < n_{1} - n_{2}, (r-1)(n_{1} - n_{2}) \text{ many}). \nonumber
\end{align}
These are arranged in $(n_{r-i} - n_{r-i+1})$ cycles each of length $i$ ($1 \leq i < r$), where in each $i$-cycle the vectors $e_{r}$, \dots, $e_{r-i+1}$ with suitable coefficients $T^{s}$ occur. From $j = h+1$ on, the
behavior is completely regular with cycles of length $r$:
\begin{equation} \label{Eq.Relation-between-lambdas-and-powers-of-T}
	\lambda_{h+1} = T^{n_{1}}e_{r}, \lambda_{h+2} = T^{n_{1}-n_{r-1}}e_{r-1}, \dots, \lambda_{h+r-1} = e_{1} \quad \text{and} \quad \lambda_{j+r} = T\lambda_{j}.
\end{equation}
We note that the norms $\nu_{\mathbf{x}}(\lambda_{j})$ are equal in each cycle but strictly grow from each cycle to the next. Hence:

\begin{Proposition} \label{Proposition.Characterisation-of-inseparable-points-of-WIZ}
	The point $\mathbf{n} \in \mathcal{W}(\mathds{Z})$ is $k$-inseparable if and only if $k$ is not the least index of the cycle in \eqref{Eq.Powers-of-T} or \eqref{Eq.Relation-between-lambdas-and-powers-of-T} to which $\lambda_{k}$ belongs. \hfill \mbox{$\square$}
\end{Proposition}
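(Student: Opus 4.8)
The plan is to reduce the statement to the norm--equality criterion already recorded in \ref{Sub.x-in-WIQ} and then to read it off the explicit characteristic sequence written down in \eqref{Eq.Powers-of-T} and \eqref{Eq.Relation-between-lambdas-and-powers-of-T}. First I would recall that, since $(\lambda_{1}, \lambda_{2}, \dots)$ is an $\mathds{F}$-SMB of $(\sum A e_{i}, \nu_{\mathbf{x}})$, the point $\mathbf{n}$ is $k$-inseparable if and only if $\nu_{\mathbf{x}}(\lambda_{k}) = \nu_{\mathbf{x}}(\lambda_{k+1})$. Thus the entire question becomes the purely combinatorial one of deciding when two consecutive members of the sequence have equal norm.

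The key computation I would carry out is the norm of a single basis vector. For $\mathbf{x} = \mathbf{n} \in \mathcal{W}(\mathds{Z})$ one has $\log \omega_{i} = x_{i} = n_{i}$, so $\nu_{\mathbf{x}}(T^{s} e_{i}) = \lvert T \rvert^{s} \lvert \omega_{i} \rvert = q^{\,s + n_{i}}$. From this I would note that the vectors of a fixed norm $q^{N}$ are exactly the $T^{\,N - n_{i}} e_{i}$ with $n_{i} \le N$, one for each $i$ in the tail $\{\, i \mid n_{i} \le N \,\}$ (a tail because $n_{1} \ge \dots \ge n_{r} = 0$). These are precisely the vectors collected into a single cycle in \eqref{Eq.Powers-of-T}, respectively \eqref{Eq.Relation-between-lambdas-and-powers-of-T}: inside such a cycle the norm is the constant $q^{N}$, whereas passing to the next cycle raises $N$ and hence raises the norm strictly. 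This is exactly the assertion made just before the Proposition, and it identifies the cycles with the maximal blocks of constant norm in the characteristic sequence.

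Granting this, the Proposition is immediate. By the criterion of \ref{Sub.x-in-WIQ}, $\mathbf{n}$ is $k$-inseparable exactly when $\lambda_{k}$ and $\lambda_{k+1}$ share a norm, i.e. when they lie in a common cycle; and they lie in a common cycle precisely when $k$ is not the largest index of the cycle to which $\lambda_{k}$ belongs, equivalently when $k+1$ is not the least index of its cycle, so that $\lambda_{k+1}$ does not open a new block. The indices $k$ at which $\mathbf{n}$ fails to be $k$-inseparable are therefore exactly those at which a cycle ends and the norm jumps.

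I do not expect a substantial obstacle: the content is bookkeeping. The one point requiring care is to verify, using the norm formula $\nu_{\mathbf{x}}(T^{s} e_{i}) = q^{\,s + n_{i}}$ together with the tie--breaking rule of \eqref{Eq.Total-oder-on-B} (equal norms ordered by larger $i$ first), that the blocks singled out in \eqref{Eq.Powers-of-T} and \eqref{Eq.Relation-between-lambdas-and-powers-of-T} really are the maximal constant--norm blocks and that consecutive blocks have strictly increasing norm; and to keep the endpoint bookkeeping straight, since the equality $\nu_{\mathbf{x}}(\lambda_{k}) = \nu_{\mathbf{x}}(\lambda_{k+1})$ is governed by whether $\lambda_{k+1}$ begins a new cycle.
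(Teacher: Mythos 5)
Your argument is correct and is essentially the paper's own: the Proposition carries a terminal $\square$ precisely because it is meant to be read off from the sentence immediately preceding it, namely that the norms $\nu_{\mathbf{n}}(\lambda_{j})$ are equal within each cycle of \eqref{Eq.Powers-of-T}, \eqref{Eq.Relation-between-lambdas-and-powers-of-T} and strictly increase from one cycle to the next --- exactly the fact you verify via $\nu_{\mathbf{n}}(T^{s}e_{i}) = q^{\,s+n_{i}}$ and the tie-breaking rule of \eqref{Eq.Total-oder-on-B}. One point worth recording: your endpoint bookkeeping gives the criterion in the form \enquote{$k$ is not the \emph{last} index of the cycle containing $\lambda_{k}$} (equivalently, $\lambda_{k+1}$ does not open a new cycle), and this, rather than the word \enquote{least} as printed, is the intended reading of the Proposition --- as confirmed by its later applications, e.g.\ in Proposition \ref{Proposition.Characterisation-of-Elements-of-Wk}, where it yields $\mathbf{0} \in \mathcal{W}(k) \Leftrightarrow k \not\equiv 0 \pmod{r}$, i.e.\ $k$ is not the last index of a cycle of length $r$.
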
	

\subsection{} If $\mathbf{x}$ fails to be in $\mathcal{W}(\mathds{Q})$ but still belongs to $\mathcal{A}(\mathds{Q})$ then \eqref{Eq.Isomorphism-of-K-infty-vector-spaces} and \eqref{Eq.Total-oder-on-B} still make sense, $\{e_{i} \mid 1 \leq i \leq r \}$ is still an 
orthogonal basis of $(V, \nu_{\mathbf{x}})$ and \eqref{Eq.Total-oder-on-B} produces an $\mathds{F}$-SMB of $(\sum Ae_{i}, \nu_{\mathbf{x}})$, but in the case $x_{1} \geq x_{2} \geq \dots \geq x_{r}$ no longer holds. In particular,
$\lambda_{1}$ not necessarily equals $e_{r}$.

Letting $W$ be the Weyl group of $\mathcal{A}$, which is isomorphic with the symmetric group $S_{r}$ and permutes the coordinates of $\mathcal{A}(\mathds{R}) = \mathds{R}^{r}/\mathds{R}(1,1,\dots,1)$,
then $\mathcal{A} = W\mathcal{W}$ and for each modular form $f$ for $\Gamma$,
\begin{equation}
	\mathcal{A}(f) = \mathcal{BT}(f) \cap \mathcal{A} = W\mathcal{W}(f),
\end{equation}
that is, for $f = \alpha_{k}$, $\mathcal{A}(k) = W\mathcal{W}(k)$. As the form $\alpha_{1}$ equals $(T^{q}-T)^{-1}g_{1}$, we find
\begin{equation}
	\mathcal{W}(1) = \mathcal{W}(g_{1}) = \mathcal{W}_{r-1} \qquad \text{(see (1.12.1))}.
\end{equation}
(Following our general convention, we briefly write \enquote{$\mathcal{W}_{r-1}$} for the set of $\mathds{Q}$-points $\mathcal{W}_{r-1}(\mathds{Q})$ of the full subcomplex $\mathcal{W}_{r-1}$ of $\mathcal{BT}$.) 

The crucial step in determining the higher $\mathcal{W}(k) = \mathcal{W}(\alpha_{k})$ is the recursion procedure given by the next result.

\begin{Proposition}
	Let $\mathbf{x} \in \mathcal{W}(\mathds{Q})$ and $\mathbf{x}' \defeq \mathbf{x} - \mathbf{n}_{r-1} \in \mathcal{A}(\mathds{Q})$ be given. For each $k \in \mathds{N}$ the equivalence 
	\[
		\mathbf{x} \in \mathcal{W}(k+1) \Longleftrightarrow \mathbf{x}' \in \mathcal{A}(k)
	\] 
	holds.
\end{Proposition}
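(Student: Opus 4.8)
The plan is to work entirely with the characteristic sequences of $\mathbf{x}$ and of $\mathbf{x}'$, since by definition $k$-inseparability of a point is precisely a statement about consecutive equalities of norms along its characteristic sequence, and the construction of that sequence via \eqref{Eq.Total-oder-on-B} makes sense for $\mathbf{x}' \in \mathcal{A}(\mathds{Q})$ as noted just above. Writing $\mathbf{x} = (x_{1}, \dots, x_{r})$ with $x_{1} \geq \dots \geq x_{r}$, the shift by $\mathbf{n}_{r-1} = (1, \dots, 1, 0)$ gives $x_{i}' = x_{i} - 1$ for $i < r$ and $x_{r}' = x_{r}$, so on the common generating set $B = \{ T^{s}e_{i} \mid s \in \mathds{N}_{0}, 1 \leq i \leq r\}$ one has $\log \nu_{\mathbf{x}'}(T^{s}e_{i}) = s + x_{i} - 1$ for $i < r$ and $= s + x_{r}$ for $i = r$, whereas $\log \nu_{\mathbf{x}}(T^{s}e_{i}) = s + x_{i}$. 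The key computation I would record first is the resulting norm identity
\[
	q\, \nu_{\mathbf{x}'}(T^{s}e_{i}) = \begin{cases} \nu_{\mathbf{x}}(T^{s}e_{i}), & i < r, \\ \nu_{\mathbf{x}}(T^{s+1}e_{r}), & i = r. \end{cases}
\]

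Next I would introduce the map $\Psi \colon B \to B \smallsetminus \{ e_{r}\}$ sending $T^{s}e_{i} \mapsto T^{s}e_{i}$ for $i < r$ and $T^{s}e_{r} \mapsto T^{s+1}e_{r}$. It is a bijection onto $B$ with the single element $e_{r}$ removed, and by the displayed identity it transports $\nu_{\mathbf{x}'}$ to the rescaled norm $q^{-1}\nu_{\mathbf{x}}$. Since multiplying a norm by a positive constant changes neither the strict order nor the equalities of norms, and since $\Psi$ preserves the index $i$ whenever $i < r$ and keeps the index equal to $r$ in the remaining case, I would check that $\Psi$ respects the total order \eqref{Eq.Total-oder-on-B} together with its tie-breaking rule. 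The only case needing attention is a tie in $\nu_{\mathbf{x}'}$ between some $T^{s}e_{i}$ with $i < r$ and some $T^{s'}e_{r}$: the rule places the index-$r$ vector first, and after applying $\Psi$ the images $T^{s}e_{i}$ and $T^{s'+1}e_{r}$ again tie in $\nu_{\mathbf{x}}$ by the identity, with the index-$r$ vector still first. Hence $\Psi$ is an isomorphism of ordered sets.

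Recalling that $\lambda_{1} = e_{r}$ is the first term of the characteristic sequence of $\mathbf{x}$ (as $\mathbf{x} \in \mathcal{W}$), the order-isomorphism $\Psi$ identifies the characteristic sequence $\mu_{1}, \mu_{2}, \dots$ of $\mathbf{x}'$ with the tail $\lambda_{2}, \lambda_{3}, \dots$ of that of $\mathbf{x}$; concretely $\nu_{\mathbf{x}}(\lambda_{j+1}) = q\, \nu_{\mathbf{x}'}(\mu_{j})$ for all $j$. Cancelling the common factor $q$, the equality $\nu_{\mathbf{x}}(\lambda_{k+1}) = \nu_{\mathbf{x}}(\lambda_{k+2})$ holds if and only if $\nu_{\mathbf{x}'}(\mu_{k}) = \nu_{\mathbf{x}'}(\mu_{k+1})$, i.e.\ $\mathbf{x}$ is $(k+1)$-ins if and only if $\mathbf{x}'$ is $k$-ins. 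As $\mathbf{x} \in \mathcal{W}(\mathds{Q})$ is assumed, this reads $\mathbf{x} \in \mathcal{W}(k+1) \Longleftrightarrow \mathbf{x}' \in \mathcal{A}(k)$, which is the assertion.

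I expect the main obstacle to be purely bookkeeping: pinning down $\Psi$ as a genuine bijection $B \to B \smallsetminus \{e_{r}\}$ and verifying order-preservation in the single mixed tie-case, where the power of $e_{r}$ is shifted by one but the index $r$ — and hence the outcome of the tie-break — is unchanged. Everything else reduces to the elementary norm computation above and the remark that rescaling $\nu_{\mathbf{x}'}$ by $q$ leaves its characteristic sequence untouched.
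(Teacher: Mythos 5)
Your proof is correct, and it takes a genuinely cleaner route than the paper's, though both rest on the same underlying shift of characteristic sequences. The paper's pivotal relation --- $\lambda_{k}=\lambda'_{k-1}$ when $\lambda'_{k-1}=T^{s}e_{i}$ with $i<r$, and $\lambda_{k}=T\lambda'_{k-1}$ when $\lambda'_{k-1}=T^{s}e_{r}$ --- is precisely your map $\Psi$ applied to $\lambda'_{k-1}$, but the paper establishes it only beyond an explicitly located index $j$ (the position where $e_{r-1}$, resp.\ $e_{r}$, first enters the primed sequence), and consequently must split into two cases ($\mathbf{x}'\in\mathcal{W}$ or not) and, within each, dispose of the range $k<j$ by checking separately that both inseparability conditions fail there. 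Your observation that $\Psi$ is a globally defined order isomorphism $(B,\leq_{\mathbf{x}'})\to(B\smallsetminus\{e_{r}\},\leq_{\mathbf{x}})$ --- it multiplies every norm by the constant $q$ and preserves the index that governs the tie-break in \eqref{Eq.Total-oder-on-B} --- makes the relation $\nu_{\mathbf{x}}(\lambda_{j+1})=q\,\nu_{\mathbf{x}'}(\mu_{j})$ hold uniformly for all $j\geq 1$, so all case distinctions evaporate; the only external inputs are $\lambda_{1}=e_{r}$ (correctly justified by $\mathbf{x}\in\mathcal{W}$: minimal norm, with the tie-break favoring the largest index) and the fact, from \ref{Sub.x-in-WIQ} and its extension to $\mathcal{A}(\mathds{Q})$, that inseparability of a point is read off from consecutive norms of its characteristic sequence. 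What your version buys is uniformity, and the insight that the paper's restriction \enquote{for $k>j$} is unnecessary; what the paper's version buys is an explicit description of the initial segments of the two sequences, in the spirit of the cycle bookkeeping it exploits in later proofs. If you write this up, the one step worth making explicit (it is implicit in the paper as well) is that only finitely many elements of $B$ lie below any given norm bound, so both total orders enumerate $B$ as a sequence and the order isomorphism $\Psi$ really does identify the two enumerations term by term with a shift of one.
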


\begin{proof}
	Let $\{\lambda_{1}, \lambda_{2}, \dots, \}$ and $\{ \lambda_{1}', \lambda_{2}', \dots \}$ be the characteristic sequences of $\mathbf{x}$ and $\mathbf{x}'$ in $V$ as in \eqref{Eq.Total-oder-on-B}. 
	
	Suppose first that \fbox{$\mathbf{x}' \in \mathcal{W}$}, which means that $x_{r-1}' \geq 0$, i.e., $x_{r-1} \geq 1$. Let $j \in \mathds{N}$ be such that $e_{r-1} = \lambda_{j+1}'$ (that is, $j - 1 \leq x_{r-1}' < j$).
	Then $\lambda_{s} = \lambda_{s}' = T^{s-1}e_{r}$ for $1 \leq s \leq j$, $\lambda_{j+1} = T^{j}e_{r}$, $\lambda_{j+2} = \lambda_{j+1}' = e_{r-1}$, and for $k > j$
	\begin{equation}
		\lambda_{k} = \begin{cases} \lambda_{k-1}',		&\text{if } \lambda_{k-1}' = T^{s}e_{i}, i<r \\ T\lambda'_{k-1},	&\text{if } \lambda_{k-1}' = T^{s}e_{r}. \end{cases}
	\end{equation}
	Hence for $k > j$ always $\nu_{\mathbf{x}}(\lambda_{k}) = q \nu_{\mathbf{x}'}(\lambda_{k-1}')$ holds. We get
	\begin{equation} \label{Eq.Characterisation-of-valuation-of-lambdas}
		\nu_{\mathbf{x}}(\lambda_{k+1}) = \nu_{\mathbf{x}}(\lambda_{k+2}) \Longleftrightarrow \nu_{\mathbf{x}'}(\lambda_{k}') = \nu_{\mathbf{x}'}(\lambda_{k+1}')
	\end{equation}
	at least for $k \geq j$. If however $k < j$ then $q \nu_{\mathbf{x}}(\lambda_{k+1}) = \nu_{\mathbf{x}}(\lambda_{k+2})$ and $q \nu_{\mathbf{x}'}(\lambda_{k}') = \nu_{\mathbf{x}'}(\lambda_{k+1}')$, so both
	equalities fail, and the assertion is shown in this case.
	
	Now suppose that \fbox{$\mathbf{x}' \notin \mathcal{W}$}. Then still $x_{1}' \geq x_{2}' \geq x_{3}' \geq \dots \geq x_{r-1}'$, but $-1 \leq x_{r-1}' < 0$. Let $j$ be such that $e_{r} = \lambda_{j}'$; we 
	have $j > 1$ and 
	\begin{align*}
			\begin{cases} j<r, 		&\text{if }x_{r-j+1}' < 0,  x_{r-j}' \geq 0, \\ j=r, 	&\text{if } x_{1}' < 0. \end{cases}
	\end{align*}	
	Then
	\begin{equation} \label{Eq.Lambdas-Lambda-Primes-and-Ts}
		\begin{IEEEeqnarraybox*}{rClCrClCrClCrClCrCl}
										&&					&\quad & \lambda_{1}' 	&=& e_{r-1},	 	& \quad 	& \lambda_{2}' 	&=& e_{r-2},	& \quad \dots \quad 	& \lambda_{j-1}' 	&=& e_{r-j+1},	& \quad  & \lambda_{j}' 			&=& e_{r} \\
			\lambda_{1}		&=&	e_{r},	&				& \lambda_{2}	&=& e_{r-1},		& 				&	\lambda_{3}	&=& e_{r-2},	& \dots								& \lambda_{j}			&=& e_{r-j+1},	& \quad  & \lambda_{j+1}		&=& Te_{r}
		\end{IEEEeqnarraybox*}
	\end{equation}
	and for $k > j$ always
	\begin{equation}
		\nu_{\mathbf{x}}(\lambda_{k}) = q \nu_{\mathbf{x}'}(\lambda_{k-1}') \quad \text{holds}.
	\end{equation}
	As before we get \eqref{Eq.Characterisation-of-valuation-of-lambdas} for $k \geq j$, while that equivalence for $k<j$ follows from comparing the two lines of \eqref{Eq.Lambdas-Lambda-Primes-and-Ts}.
\end{proof}

\subsection{}\label{Sub.Procedure-to-construct-the-Wk} By the preceding we obtain the following procedure to construct the $\mathcal{W}(k)$.
\begin{itemize}
	\item $\mathcal{W}(1) = \mathcal{W}_{r-1}$ ($=\mathcal{W}_{r-1}(\mathds{Q})$)
	\item For $k \geq 1$, $\mathcal{W}(k+1) = (W\mathcal{W}(k) + \mathbf{n}_{r-1}) \cap \mathcal{W}$.
\end{itemize}
Now $\mathcal{W}_{r-1}$ is a simplicial complex with the properties
\begin{enumerate}
	\item $\mathcal{W}_{r-1}$ is a full subcomplex of $\mathcal{W}$ and thus of $\mathcal{BT}$;
	\item $\mathcal{W}_{r-1}$ is everywhere of dimension $r-2$ (each vertex belongs to a simplex of maximal dimension $r-2$);
	\item $\mathcal{W}_{r-1}$ is connected.
\end{enumerate}
We will see that these properties essentially turn over to all the $\mathcal{W}(k)$ and $\mathcal{BT}(k)$. In particular, all the $\alpha_{k}$ are simplicial.

It is obvious from the procedure that $\mathcal{W}(k)$ is (the set of $\mathds{Q}$-points of) a subcomplex, which will be labelled by the same symbol. Hence also $\mathcal{A}(k)$ and $\mathcal{BT}(k)$
are subcomplexes.

\begin{Proposition}
	$\mathcal{BT}(k)$, $\mathcal{A}(k)$ and $\mathcal{W}(k)$ are full subcomplexes of $\mathcal{BT}$ and everywhere of dimension $r-2$.
\end{Proposition}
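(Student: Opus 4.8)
The plan is to prove the three assertions — fullness, the dimension bound, and (implicitly via the construction) that these are honest subcomplexes — by exploiting the recursive description of $\mathcal{W}(k)$ in \ref{Sub.Procedure-to-construct-the-Wk}, together with the $W$-equivariance $\mathcal{A}(k) = W\mathcal{W}(k)$ and the $\Gamma$-equivariance $\mathcal{BT}(k) = \Gamma\mathcal{W}(k)$. Since the Weyl group $W$ and $\Gamma$ act by simplicial automorphisms preserving dimension and fullness, it suffices to establish both properties for $\mathcal{W}(k)$ alone; the statements for $\mathcal{A}(k)$ and $\mathcal{BT}(k)$ then follow by transport of structure. So the whole argument reduces to a claim about $\mathcal{W}(k)$ inside $\mathcal{W}$.

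For $\mathcal{W}(k)$ I would argue by induction on $k$, the base case $\mathcal{W}(1) = \mathcal{W}_{r-1}$ being settled by properties (1)–(3) listed just before the statement. For the inductive step, the recursion $\mathcal{W}(k+1) = (W\mathcal{W}(k) + \mathbf{n}_{r-1}) \cap \mathcal{W}$ must be shown to preserve fullness and the everywhere-$(r-2)$-dimensional property. The cleanest route is to give an intrinsic combinatorial characterization of which vertices $\mathbf{n} \in \mathcal{W}(\mathds{Z})$ are $k$-inseparable, read off directly from Proposition \ref{Proposition.Characterisation-of-inseparable-points-of-WIZ}: namely $\mathbf{n} \in \mathcal{W}(k)$ iff $k$ is not the least index of its cycle in the sequence \eqref{Eq.Powers-of-T}/\eqref{Eq.Relation-between-lambdas-and-powers-of-T}. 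Using the explicit cycle structure (the $i$-cycles have lengths controlled by the differences $n_{r-i} - n_{r-i+1}$, and the quantities $h_i$ in \eqref{Eq.Recursion-for-hes}), this condition translates into explicit linear inequalities/congruences on the coordinates $n_1, \dots, n_r$. Fullness then means: a simplex of $\mathcal{W}$ lies in $\mathcal{W}(k)$ as soon as all its vertices do, which one checks by verifying that the membership condition is invariant under the moves relating adjacent vertices of a common simplex (the edges of the standard apartment correspond to replacing some $n_i$ by $n_i \pm 1$, i.e. to the generators $\mathbf{n}_i$ of $\mathcal{W}(\mathds{Z})$).

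For the dimension claim I would show that every vertex $\mathbf{n} \in \mathcal{W}(k)$ lies on a maximal-dimensional simplex of $\mathcal{W}(k)$, equivalently that one can find $r-1$ neighbours spanning an $(r-2)$-simplex all of whose vertices remain $k$-inseparable. Concretely, given $\mathbf{n}$ that is $k$-inseparable because $\lambda_k$ and $\lambda_{k+1}$ share a cycle, the freedom to move along the walls of $\mathcal{W}$ while keeping that coincidence of norms should furnish exactly an $(r-2)$-dimensional face; this mirrors how $\mathcal{W}_{r-1}$ sits as the wall $x_{r-1} = x_r$ and is itself everywhere $(r-2)$-dimensional. Translating back through the recursion, adding $\mathbf{n}_{r-1}$ and intersecting with $\mathcal{W}$ shifts the wall structure but preserves codimension one, which is what drives the inductive persistence of the dimension.

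\emph{The main obstacle} I anticipate is controlling the operation $(W\,\cdot\, + \mathbf{n}_{r-1}) \cap \mathcal{W}$ at the level of simplices rather than vertices: applying $W$ scatters $\mathcal{W}(k)$ across the apartment, translating by $\mathbf{n}_{r-1}$ pushes it partly out of $\mathcal{W}$, and intersecting with $\mathcal{W}$ can a priori chop simplices or create low-dimensional debris near the walls $\mathcal{W}_i$. The delicate point is to verify that no vertex of $\mathcal{W}(k+1)$ becomes isolated or trapped on a face of dimension below $r-2$ after this folding — i.e. that the intersection with $\mathcal{W}$ meets each returned simplex in a full face and that fullness is not destroyed at the boundary walls. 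I would handle this by working directly with the intrinsic inequality description from the cycle structure, where membership is manifestly a condition on vertices that propagates to simplices, thereby sidestepping the geometric bookkeeping of the fold; the equivariance statements then finish the proof for $\mathcal{A}(k)$ and $\mathcal{BT}(k)$ at no extra cost.
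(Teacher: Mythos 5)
Your reduction step contains the decisive gap. You argue that because $W$ and $\Gamma$ act by simplicial automorphisms, it suffices to prove fullness and the dimension property for $\mathcal{W}(k)$, the statements for $\mathcal{A}(k) = W\mathcal{W}(k)$ and $\mathcal{BT}(k) = \Gamma\mathcal{W}(k)$ following by \enquote{transport of structure}. Transport does work for the dimension property (a vertex of the union lies in some translate $w\mathcal{W}(k)$, and an $(r-2)$-simplex of that translate is one of the union), but it fails for fullness: $\mathcal{A}(k)$ and $\mathcal{BT}(k)$ are \emph{unions} of translates, and a union of full subcomplexes need not be full --- a simplex of $\mathcal{A}$ can have each of its vertices in some translate $w\mathcal{W}(k)$ without being a simplex of any single translate. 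Closing this gap requires exactly the ingredients the paper's proof supplies and your sketch omits: (a) every simplex of $\mathcal{A}$ lies in a single chamber $w\mathcal{W}$ (because the walls are subcomplexes), and every simplex of $\mathcal{BT}$ lies in an apartment; (b) the compatibilities $\mathcal{W}(k) = \mathcal{A}(k) \cap \mathcal{W}$ and $\mathcal{A}(k) = \mathcal{BT}(k) \cap \mathcal{A}$; (c) fullness of $\mathcal{W}$ itself, together with the fact that an intersection of full subcomplexes is full. The paper runs the induction on $\mathcal{W}(k)$ and $\mathcal{A}(k)$ \emph{simultaneously}, proving fullness of $\mathcal{A}(k+1)$ from fullness of $\mathcal{A}(k)+\mathbf{n}_{r-1}$ and of $\mathcal{W}$ via (a)--(c); and note that for the dimension claim it argues in the direction opposite to yours: it first produces an $(r-2)$-simplex through a given vertex inside $\mathcal{A}(k+1)$, and then \emph{folds} it into $\mathcal{W}$ using a Weyl element fixing that vertex, precisely because the simplex obtained by transport may stick out of $\mathcal{W}$.

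The second gap is that your intrinsic route to fullness of $\mathcal{W}(k)$ is never carried out, and as sketched it cannot be completed. Proposition \ref{Proposition.Characterisation-of-inseparable-points-of-WIZ} characterizes $k$-inseparability only at vertices $\mathbf{n} \in \mathcal{W}(\mathds{Z})$, whereas fullness is a statement about all $\mathds{Q}$-points: one must know that $\mathbf{x} \in \mathcal{W}(\mathds{Q})$ is $k$-inseparable exactly when all vertices of the minimal simplex containing $\mathbf{x}$ are, and no such pointwise criterion is established (or needed) in the paper. Moreover your description of adjacency is inaccurate: neighboring vertices in the apartment differ by an arbitrary $0$-$1$ vector modulo the diagonal, not only by the monoid generators $\mathbf{n}_{i}$, so \enquote{invariance under the moves $n_{i} \mapsto n_{i} \pm 1$} does not capture the simplices of $\mathcal{W}$. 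The paper sidesteps both issues by never invoking a pointwise criterion: it starts from $\mathcal{W}(1) = \mathcal{W}_{r-1}(\mathds{Q})$, which is manifestly the set of $\mathds{Q}$-points of a full subcomplex everywhere of dimension $r-2$, and shows that the recursion of \ref{Sub.Procedure-to-construct-the-Wk} preserves these properties.
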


\begin{proof}
	The result for $\mathcal{BT}(k)$ follows from that for $\mathcal{A}(k)$, since each simplex of $\mathcal{BT}$ is contained in some apartment. We use induction on $k$, where the case $k=1$ is
	in \ref{Sub.Procedure-to-construct-the-Wk}. Suppose that both properties hold for $\mathcal{W}(k)$ and $\mathcal{A}(k)$. Let $\sigma$ be a simplex in $\mathcal{A}$ with vertices in 
	$\mathcal{A}(k+1) = W\mathcal{W}(k+1)$. There exists
	$w \in \mathcal{W}$ such that $\sigma \subset w\mathcal{W}$. Then $\sigma \subset w\mathcal{W}(k+1)(\mathds{Z}) = w(\mathcal{A}(k) + \mathbf{n}_{r-1} \cap \mathcal{W})(\mathds{Z})$. As both 
	$\mathcal{A}(k) + \mathbf{n}_{r-1}$ and $\mathcal{W}$ are full subcomplexes, $\sigma$ is a simplex in $w(\mathcal{A}(k) + \mathbf{n}_{r-1})$ and in $w\mathcal{W}$, hence in $\mathcal{A}(k+1)$. 
	This (and a similar argument for $\mathcal{W}(k+1)$) shows that $\mathcal{A}(k+1)$ and $\mathcal{W}(k+1)$ are full subcomplexes.
	
	Let now $w \mathbf{x}$ be a vertex of $\mathcal{A}(k+1) = W(\mathcal{A}(k) + \mathbf{n}_{r-1})$ with $w \in W$, $\mathbf{x} \in (\mathcal{A}(k) + \mathbf{n}_{r-1})(\mathds{Z})$. As $\mathcal{A}(k)$
	and thus $\mathcal{A}(k) + \mathbf{n}_{r-1}$ are everywhere of dimension $r-2$, there exists an $(r-2)$-simplex $\sigma$ of $\mathcal{A}(k) + \mathbf{n}_{r-1}$ with $\mathbf{x} \in \sigma$. Then $w \sigma$
	is an $(r-2)$-simplex in $\mathcal{A}(k+1)$ that encompasses $w \mathbf{x}$. Hence the assertion is shown for $\mathcal{A}(k+1)$.
	
	Now suppose that $\mathbf{x}$ is a vertex of $\mathcal{W}(k+1)$, and let as above $\sigma$ be an $(r-2)$-simplex of $\mathcal{A}(k+1)$ that contains $\mathbf{x}$. If $\sigma \subset W(\mathds{Z})$ then
	$\sigma$ is in fact a simplex in $\mathcal{W}(k+1)$, and we are ready. Otherwise, $\mathbf{x}$ lies at the boundary of $\mathcal{W}$ and there exists some $w \in \mathcal{W}$ fixing 
	$\mathbf{x}$ such that $\sigma \subset w^{-1}\mathcal{W}(\mathds{Z})$. Then $w \sigma$ is an $(r-2)$-simplex in $\mathcal{W}(k+1)$ that contains $\mathbf{x}$.
\end{proof}

We let $\sigma = \{ \mathbf{n}_{i} \mid 0 \leq i < r \}$ be the standard $(r-1)$-simplex in $\mathcal{A}$, with faces $\sigma^{(i)} = \sigma \smallsetminus \{ \mathbf{n}_{i} \}$. (Here $\mathbf{n}_{0} = \mathbf{0}$
is the origin.) Which of the $\mathbf{n}_{i}$ belong to $\mathcal{W}(k)$?

\begin{Proposition} \label{Proposition.Characterisation-of-Elements-of-Wk}
	For $0 \leq i < r$ the following hold:
	\begin{enumerate}[label=$\mathrm{(\roman*)}$]
		\item $\mathbf{n}_{i} \in \mathcal{W}(k) \Leftrightarrow k \not\equiv r-i \pmod{r}$;
		\item $\sigma^{(i)}$ is an $(r-2)$-simplex in $\mathcal{W}(k) \Leftrightarrow k \equiv r - i \pmod{r}$.
	\end{enumerate}
\end{Proposition}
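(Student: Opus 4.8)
The plan is to treat the two assertions in turn, deducing (ii) from (i) together with the fullness of $\mathcal{W}(k)$ established in the preceding Proposition. The whole argument rests on writing out the characteristic sequence \eqref{Eq.Total-oder-on-B} at the single vertex $\mathbf{n}_i$ and reading off its cycle structure; everything after that is bookkeeping with residues modulo $r$.

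For (i), I would start from \ref{Subsub.Characterisation-of-BTIQ}, so that $\mathbf{n}_i \in \mathcal{W}(k)$ is equivalent to $\mathbf{n}_i$ being $k$-inseparable, i.e. to $\nu_{\mathbf{x}}(\lambda_k) = \nu_{\mathbf{x}}(\lambda_{k+1})$ in the characteristic sequence of $\mathbf{x} = \mathbf{n}_i$. For $\mathbf{n}_i = (1,\dots,1,0,\dots,0)$ (with $i$ ones) one has $n_j = 1$ for $j \leq i$ and $n_j = 0$ for $j > i$, whence $\nu_{\mathbf{x}}(T^s e_j) = q^{\,n_j + s}$. Sorting the $T^s e_j$ by this value, with ties broken as in \eqref{Eq.Total-oder-on-B}, groups them into cycles of constant norm: the value-$1$ cycle is $\{e_j : j > i\} = \{e_r, \dots, e_{i+1}\}$ and has length $r-i$, while for each $m \geq 1$ the value-$q^m$ cycle is $\{T^m e_j : j > i\} \cup \{T^{m-1} e_j : j \leq i\}$, of length $(r-i)+i = r$. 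This is exactly \eqref{Eq.Powers-of-T}--\eqref{Eq.Relation-between-lambdas-and-powers-of-T} specialized to $\mathbf{n} = \mathbf{n}_i$. Since $k$-inseparability fails precisely when $\lambda_k$ is the top (greatest-index) element of its cycle, and the cycle tops occupy the indices $r-i,\ 2r-i,\ 3r-i,\dots$, that is, exactly the $k \geq 1$ with $k \equiv r-i \pmod r$, I conclude $\mathbf{n}_i \notin \mathcal{W}(k) \Leftrightarrow k \equiv r-i \pmod r$, which is (i). As a sanity check, at $k=1$ this reproduces $\mathbf{n}_i \in \mathcal{W}(1) = \mathcal{W}_{r-1}$ for exactly the $i \neq r-1$.

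For (ii), I would use that $\sigma^{(i)}$ is a codimension-one face of the standard chamber simplex $\sigma$, hence an $(r-2)$-simplex of $\mathcal{W}$ with vertex set $\{\mathbf{n}_j : 0 \leq j < r,\ j \neq i\}$. Because $\mathcal{W}(k)$ is a full subcomplex, $\sigma^{(i)}$ lies in $\mathcal{W}(k)$ iff every vertex $\mathbf{n}_j$ with $j \neq i$ does; by (i) this says $k \not\equiv r-j \pmod r$ for all $j \neq i$. As $j \mapsto r-j$ is a bijection of $\mathds{Z}/r$, the residues $r-j$ with $j \neq i$ exhaust $\mathds{Z}/r$ except $r-i$, so the condition holds iff $k \equiv r-i \pmod r$, giving (ii).

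The only genuine work is the combinatorial step in (i): correctly isolating the initial short cycle of length $r-i$ and confirming that every later cycle has full length $r$, so that the cycle tops land precisely on the progression $r-i \pmod r$. Once that is in place, (ii) is a formal consequence of (i) through the fullness of $\mathcal{W}(k)$ and the bijectivity of $j \mapsto r-j$ on $\mathds{Z}/r$.
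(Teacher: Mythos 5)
Your proof is correct and takes essentially the same route as the paper's: read off the cycle structure of the characteristic sequence at $\mathbf{n}_{i}$ (one initial $(r-i)$-cycle followed by $r$-cycles only), apply the cycle criterion for $k$-inseparability, and obtain (ii) from (i) via the fullness of $\mathcal{W}(k)$ together with the bijectivity of $j \mapsto r-j$ modulo $r$. The only discrepancy is harmless: your form of the criterion (inseparability fails exactly when $\lambda_{k}$ is the last element of its cycle) is the correct one, consistent with how the paper actually uses Proposition \ref{Proposition.Characterisation-of-inseparable-points-of-WIZ}, whose literal wording (\enquote{least index}) contains an off-by-one slip.
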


\begin{proof}
	Clearly, (ii) is a consequence of (i) and the fullness of the subcomplex $\mathcal{W}(k)$.
	
	For $i=0$, $\mathbf{n} = \mathbf{n}_{0} = \mathbf{0}$, we have $h(\mathbf{0}) = 0$. Proposition \ref{Proposition.Characterisation-of-inseparable-points-of-WIZ} together with \eqref{Eq.Relation-between-lambdas-and-powers-of-T} shows that $\mathbf{0} \in \mathcal{W}(k)$ if and only if $k \not\equiv 0 \pmod{r}$. 
	Let now $i > 0$. For $\mathbf{n} = \mathbf{n}_{i}$, in the characteristic sequence $\lambda_{1}, \lambda_{2}, \dots$ there is one $(r-i)$-cycle followed by $r$-cycles only. Also, by Proposition \ref{Proposition.Characterisation-of-inseparable-points-of-WIZ}, 
	$\mathbf{n}_{i} \in \mathcal{W}(k)$ if and only if $k \not\equiv r-i \pmod{r}$.
\end{proof}

\begin{Proposition}
	If $r > 2$ the complex $\mathcal{W}(k)$ is connected for each $k \in \mathds{N}$. (If $r=2$, $\mathcal{W}(k)$ is a finite set of vertices and thus in general not connected.)
\end{Proposition}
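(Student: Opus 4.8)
The plan is to induct on $k$ using the recursion of \ref{Sub.Procedure-to-construct-the-Wk}, exploiting that a simplicial complex is connected iff its $1$-skeleton is, so that it suffices to join vertices of $\mathcal W(k)$ by edge-paths lying in $\mathcal W(k)$. The base case is the given fact that $\mathcal W(1)=\mathcal W_{r-1}$ is connected. Assuming $\mathcal W(k)$ connected, I study $\mathcal W(k+1)=(W\mathcal W(k)+\mathbf n_{r-1})\cap\mathcal W$, writing $\mathcal A(k)=W\mathcal W(k)$ for the $W$-invariant extension to the apartment.

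The first point is to locate the hypothesis $r>2$. By Proposition \ref{Proposition.Characterisation-of-Elements-of-Wk}(i) the residues $r-i\bmod r$ run over all classes as $i$ ranges over $0,\dots,r-1$, so among the $r$ vertices $\mathbf n_0,\dots,\mathbf n_{r-1}$ of the standard simplex exactly one fails to lie in $\mathcal W(k)$. Since $\mathbf n_j\in\mathcal W_i$ for all $j\ne i$, each wall $\mathcal W_i$ still contains at least $r-2$ vertices of $\mathcal W(k)$; for $r\ge 3$ this leaves room on every wall, whereas for $r=2$ a wall may be entirely missed — which is precisely why the statement breaks down there. Because $\mathcal A(k)$ is $W$-invariant and meets every panel, gallery-connectedness of the apartment promotes connectivity of $\mathcal W(k)$ to connectivity of $\mathcal A(k)$; this is the conceptual reason the lateral room matters.

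For the inductive step I split the vertices of $\mathcal W(k+1)$ by the coordinate $x_{r-1}$. Those with $x_{r-1}\ge 1$ satisfy $\mathbf x-\mathbf n_{r-1}\in\mathcal W$, hence $\mathbf x-\mathbf n_{r-1}\in\mathcal A(k)\cap\mathcal W=\mathcal W(k)$, so they constitute exactly the translate $\mathcal W(k)+\mathbf n_{r-1}$; this \emph{bulk} is connected because translation by $\mathbf n_{r-1}$ is a simplicial automorphism of the apartment and $\mathcal W(k)$ is connected by hypothesis. The remaining vertices lie on $\mathcal W_{r-1}$ (where $x_{r-1}=0$); for such $\mathbf x$ the fold of $\mathbf x-\mathbf n_{r-1}$ into $\mathcal W$ is (generically) the vertex $\mathbf x+\mathbf e_{r-1}$ obtained by swapping the last two coordinates, and it lies in $\mathcal W(k)$ by $W$-invariance of $\mathcal A(k)$. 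If $\mathbf x$ itself already lies in $\mathcal W(k)$, it is joined to the bulk by the edge $\{\mathbf x,\mathbf x+\mathbf n_{r-1}\}$; thus everything reduces to attaching the genuinely new wall vertices, those $\mathbf x\in\mathcal W(k+1)\cap\mathcal W_{r-1}$ with $\mathbf x\notin\mathcal W(k)$, to the bulk by paths inside $\mathcal W(k+1)$.

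The main obstacle is exactly this attachment: intersecting the connected complex $\mathcal A(k)+\mathbf n_{r-1}$ with the chamber $\mathcal W$ can a priori strand vertices, and whether a candidate neighbour such as $\mathbf x+\mathbf e_{r-1}$ or $\mathbf x-\mathbf n_{r-2}$ lies in $\mathcal W(k+1)$ is not automatic. To settle it I would pass to the explicit description of subsections \ref{Sub.x-in-WIQ}--\ref{Sub.Procedure-to-construct-the-Wk}: encoding a vertex by its sequence of cycle lengths, membership in $\mathcal W(j)$ is equivalent to $j$ not being a partial sum $C_1<C_2<\cdots$ of those lengths. On $\mathcal W_{r-1}$ the cycle lengths are exactly the rank-$(r-1)$ lengths increased by $1$, so $C_j=C_j^\flat+j$ and consecutive partial sums differ by at least $2$; this rigidity is what should force every stranded wall vertex to possess an admissible neighbour in $\mathcal W(k+1)$ pointing toward the bulk, plausibly via a secondary induction on $r$ identifying $\mathcal W_{r-1}$ with $\mathcal W^{r-1}$. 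Verifying that no wall vertex is cut off from the bulk is the delicate core of the argument; the surrounding steps are formal.
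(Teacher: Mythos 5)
Your overall strategy --- induction on $k$ through the recursion of \ref{Sub.Procedure-to-construct-the-Wk}, splitting $\mathcal{W}(k+1)$ into the \enquote{bulk} $\mathcal{W}(k)+\mathbf{n}_{r-1}$ (vertices with $x_{r-1}\geq 1$) and the wall vertices on $\mathcal{W}_{r-1}$ --- is genuinely different from the paper's, and your bulk analysis is correct as far as it goes. But the proof stops exactly at the point that carries all the content: you never show that the wall vertices of $\mathcal{W}(k+1)$ can be joined to the bulk inside $\mathcal{W}(k+1)$, and this is not a routine verification surrounded by formal steps; it \emph{is} the proposition. Note first that a wall vertex need not have any neighbour in the bulk: for $r=3$ and $k+1=3$, the wall part of $\mathcal{W}(3)$ is $\{(n,0,0)\mid n\geq 1\}$ while the bulk is $\{(1,1,0),(2,2,0)\}\cup\{(n,2,0)\mid n\geq 2\}$, so the vertex $(5,0,0)$ has no bulk neighbour and reaches the bulk only by travelling along the wall to $(1,0,0)$ and then up to $(1,1,0)$. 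So you must control the entire wall part of $\mathcal{W}(k+1)$, about which the induction hypothesis says nothing: it is not $\mathcal{W}(k)\cap\mathcal{W}_{r-1}$ (indeed $\mathcal{W}(k)\not\subseteq\mathcal{W}(k+1)$ in general), and your fold computation only yields $\mathbf{x}+e_{r-1}\in\mathcal{W}(k)$, which, as you yourself note, does not imply membership of anything useful in $\mathcal{W}(k+1)$. Second, the hypothesis $r>2$ must do its work precisely in this missing step: for $r=2$ the base case and the bulk step go through ($\mathcal{W}(1)=\{\mathbf{0}\}$, $\mathcal{W}(2)=\{(1,0)\}$), and connectedness first fails at $\mathcal{W}(3)=\{(0,0)\}\cup\{(2,0)\}$ exactly because the wall vertex $(0,0)$ cannot be attached to the bulk $\{(2,0)\}$. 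Your proposed localization of $r>2$ (each wall of the standard simplex retaining at least $r-2$ of its vertices in $\mathcal{W}(k)$) therefore cannot be the operative mechanism, and the \enquote{rigidity} of partial sums of cycle lengths together with a \enquote{secondary induction on $r$} is a statement of hope, not an argument.

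For comparison, the paper avoids induction on $k$ altogether. It fixes $k$ and runs a descent on the combinatorial distance $d(\mathbf{n},\mathbf{0})$: using the cycle description of Proposition \ref{Proposition.Characterisation-of-inseparable-points-of-WIZ}, it shows that any $\mathbf{n}\in\mathcal{W}(k)(\mathds{Z})$ has a neighbour $\mathbf{n}-\mathbf{n}_{i}$ (for a suitable $i$) still lying in $\mathcal{W}(k)$, except in the degenerate case $\mathbf{n}=m\mathbf{n}_{i}$, $m\geq 2$, with the bad congruence, where one first makes the sideways move $\mathbf{n}\mapsto\mathbf{n}-\mathbf{n}_{i}+\mathbf{n}_{\ell}$ with $\ell\neq i$, $1\leq\ell<r$ --- this is the precise point where $r>2$ enters --- and then resumes the descent. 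The path terminates at one of the pairwise adjacent vertices $\mathbf{n}_{i}\in\mathcal{W}(k)$ given by Proposition \ref{Proposition.Characterisation-of-Elements-of-Wk}. Any completion of your scheme will, in effect, have to reprove such a descent along the wall $\mathcal{W}_{r-1}$, so the inductive scaffolding buys little; as it stands, the proposal identifies the difficulty but does not resolve it.
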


\begin{proof}
	\begin{enumerate}[wide, label=(\roman*)]
		\item Given $\mathbf{n} \in \mathcal{W}(k)(\mathds{Z})$, we will construct a path in $\mathcal{W}(k)$ from $\mathbf{n}$ to some $\mathbf{n}_{i}$, which by the last proposition will give the result.
		\item Assume first that \fbox{$k > h = h(\mathbf{n})$} (see \eqref{Eq.Recursion-for-hes}). Write $k = h+jr+k_{0}$, where $j \in \mathds{N}_{0}$ and $0 < k_{0} < r$ (which is possible as $k \not\equiv h \pmod{r}$ by (\ref{Proposition.Characterisation-of-inseparable-points-of-WIZ})). Let $i$
		be such that $n_{i} > n_{i+1}$; then $\mathbf{n}_{i} \prec \mathbf{n}$, that is, $\mathbf{n}_{i}$ occurs in the presentation \eqref{Eq.Linear-combination-of-element-of-WIZ} of $\mathbf{n}$. Put 
		$\mathbf{n}' \defeq \mathbf{n} - \mathbf{n}_{i} \in \mathcal{W}(\mathds{Z})$; then $\mathbf{n}'$ is a neighbor of $\mathbf{n}$, $h' \defeq h(\mathbf{n}') = h -r+i$ and $k = h+jr+k_{0} = h' + (j+1)r + k_{0} -i$. 
		If $i \neq k_{0}$ then $k \not\equiv h' \pmod{r}$, $\mathbf{n}' \in \mathcal{W}(k)$ and $d(\mathbf{n}', \mathbf{0}) = d(\mathbf{n}, \mathbf{0}) - 1$, $h' < h$.
		Such lowering of $d(\mathbf{n}, \mathbf{0})$ and $h(\mathbf{n})$ in $\mathcal{W}(k)$ works as long as there is some $i \neq k_{0}$, $1 \leq i < r$ with $n_{i} > n_{i+1}$. 
		\item If this fails then
		\begin{equation} \label{Eq.If-this-fails-equation-for-n}
			\mathbf{n} = (n_{i} - n_{i+1}) \mathbf{n}_{i} \quad \text{with} \quad (n_{i} - n_{i+1}) \geq 2
		\end{equation}
		(if $n_{i} - n_{i+1} = 1$ then $\mathbf{n} = \mathbf{n}_{i}$ and we are ready) and $k_{0} = i$. In this case, replace $\mathbf{n}$ with $\mathbf{n}' \defeq \mathbf{n} - (\mathbf{n}_{i} - \mathbf{n}_{\ell})$ with
		some $\ell \neq i$, $1 \leq \ell < r$. Then:
		\begin{itemize}
			\item $\mathbf{n}' \in \mathcal{W}(\mathds{Z})$;
			\item $\mathbf{n}$ and $\mathbf{n}'$ are neighbors in $\mathcal{W}$;
			\item $h' = h(\mathbf{n}') = h+i-\ell$;
			\item $k = h+jr+i = h' +jr + \ell$, so $\mathbf{n}' \in \mathcal{W}(k)$, too;
			\item $d(\mathbf{n}',\mathbf{0}) = d(\mathbf{n}, \mathbf{0})$.
		\end{itemize}
		\item Now the bad case \eqref{Eq.If-this-fails-equation-for-n} does not hold for $\mathbf{n}'$ and we may continue our lowering procedure by replacing $\mathbf{n}'$ with $\mathbf{n}'' = \mathbf{n}' - \mathbf{n}_{i}$ for some $i$
		and $d(\mathbf{n}'', \mathbf{0}) < d(\mathbf{n}, \mathbf{0})$. This way we arrive after a finite number of steps at some $\mathbf{n} \in \mathcal{W}(k)$ where either $d(\mathbf{n}, \mathbf{0}) = 1$ 
		(in which case we are ready) or $k \leq h(\mathbf{n})$. Hence it suffices to treat that case.
		\item Assume \fbox{$k \leq h = h(\mathbf{n})$}. More precisely, let $k$ satisfy
		\[
			h_{i-1} < k \leq h_{i} \qquad \text{(notation of \eqref{Eq.Recursion-for-hes}; $h_{0} \defeq 0$)}.
		\]
		This implies that $n_{r-i} > n_{r-i+1}$, $\lambda_{k}$ belongs to an $i$-cycle, and $\mathbf{n}_{r-i} \prec \mathbf{n}$. The $i$-cycle to which $\lambda_{k}$ belongs looks
		\[
			(*e_{r}, *e_{r-1}, \dots, *e_{r-i+1}) = (\lambda_{a+1}, \dots, \lambda_{a+i}),
		\]
		where the $*$'s stand for suitable powers of $T$. Since $\mathbf{n} \in \mathcal{W}(k)$, $\lambda_{k}$ is one of the first $i-1$ entries, i.e., $a+1 \leq k < a + i$.
		\item Replace $\mathbf{n}$ with $\mathbf{n}' \defeq \mathbf{n} - \mathbf{n}_{r-i}$. As usual, primed data refer to $\mathbf{n}'$. The cycle structure \eqref{Eq.Powers-of-T} of the $\lambda_{j}'$ (for $\mathbf{n}'$)
		is obtained from that of the $\lambda_{j}$ (for $\mathbf{n}$) by omitting one $i$-cycle. More concretely, $\lambda_{j}' = \lambda_{j}$ for $j \leq a$, and the $(~)'$-cycle starting with $\lambda_{a+1}'$ is
		$(\lambda_{a+1}', \dots, \lambda_{a+i}') = (\lambda_{a+1}, \dots, \lambda_{a+i})$, if the common value $\log_{q} \nu_{\mathbf{n}}(\lambda_{a+1}) = \dots = \lambda_{q} \nu_{\mathbf{n}}(\lambda_{a+i})$
		is less than $n_{r-i}' = n_{r-i}-1$ and $(\lambda_{a+1}, \dots, \lambda_{a+i}, e_{r-i}, \dots)$, if $\log_{q} \nu_{\mathbf{n}}(\lambda_{a+i}) = n_{r-i}-1$. In any case, $\lambda_{k}'$ is not the last entry in its
		cycle, and so $\mathbf{n}' \in \mathcal{W}(k)$. Then $\mathbf{n}'$ is a neighbor of $\mathbf{n}$ in $\mathcal{W}(k)$ with $d(\mathbf{n}', \mathbf{0}) = d(\mathbf{n}, \mathbf{0}) -1$, and we are 
		done by induction. \qedhere
	\end{enumerate}
\end{proof}

We summarize what has been obtained.

\begin{Theorem} \label{Theorem.Characterisation-of-certain-subsets-of-BTIQ}
	Let $\Omega(\alpha_{k})$ and $\mathbf{F}(\alpha_{k})$ be the vanishing sets of the modular form $\alpha_{k}$ $(k\in \mathds{N})$ in $\Omega$ and $\mathbf{F}$, respectively, and let 
	$\lambda \colon \Omega \to \mathcal{BT}(\mathds{Q})$ be the building map. Define the following subsets of $\mathcal{BT}(\mathds{Q})$:
	\begin{align*}
		\mathcal{BT}(k) 	&\defeq \lambda(\Omega(\alpha_{k})) \\
		\mathcal{W}(k)		&\defeq \lambda(\mathbf{F}(\alpha_{k})) \\
		\mathcal{A}(k)		&\defeq W\mathcal{W}(k)
	\end{align*}
	with the Weyl group $W$ of the apartment $\mathcal{A}$. Then $\mathcal{BT}(\mathds{Q})$, $\mathcal{A}(k)$, $\mathcal{W}(k)$ are the sets of $\mathds{Q}$-points of subcomplexes of $\mathcal{BT}$, denoted
	by the same symbols. These subcomplexes of $\mathcal{BT}$ are
	\begin{enumerate}[label=$\mathrm{(\roman*)}$]
		\item full subcomplexes of $\mathcal{BT}$ (simplices of, e.g., $\mathcal{W}(k)$ are simplices of $\mathcal{BT}$ intersected with $\mathcal{W}(k)(\mathds{Z})$);
		\item everywhere of dimension $r-2$ (each vertex of, e.g., $\mathcal{W}(k)$ is contained in an $(r-2)$-simplex of $\mathcal{W}(k)$);
		\item connected if $r > 2$.
	\end{enumerate}
	In particular, the modular form $\alpha_{k}$ is simplicial as defined in \ref{Definition.Simplicial-modular-form}. These subcomplexes are related by $\mathcal{BT}(k) = \Gamma \mathcal{W}(k) = \Gamma \mathcal{A}(k)$,
	$\mathcal{A}(k) = W\mathcal{W}(k)$, $\mathcal{W}(k) = \mathcal{A}(k) \cap \mathcal{W}$, $\mathcal{A}(k+1) = W(\mathcal{A}(k) + \mathbf{n}_{r-1})$, where $\Gamma$ is the group $\GL(r,A)$ 
	and $\mathbf{n}_{r-1} = (1,\dots,1,0)$.
\end{Theorem}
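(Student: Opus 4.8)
The plan is to treat this Theorem as the consolidation of the Propositions of \ref{Sub.x-in-WIQ}--\ref{Sub.Procedure-to-construct-the-Wk}: the genuine combinatorial and topological content (fullness, dimension $r-2$, connectedness) has already been extracted from the characteristic-sequence analysis, so the remaining tasks are to reconcile the analytic definitions of $\mathcal{BT}(k)$, $\mathcal{W}(k)$, $\mathcal{A}(k)$ with the combinatorial notion of $k$-inseparability and then to verify the listed set-theoretic relations.

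First I would set up the dictionary between zero sets and inseparability. By \ref{Subsub.Characterisation-of-BTIQ}, a point $\mathbf{x} \in \mathcal{BT}(\mathds{Q})$ lies in $\lambda(\Omega(\alpha_{k}))$ if and only if $\Lambda_{\boldsymbol{\omega}}$ is $k$-inseparable for $\boldsymbol{\omega} \in \Omega_{\mathbf{x}}$, a condition depending only on $\mathbf{x} = \lambda(\boldsymbol{\omega})$. Combined with $\lambda^{-1}(\mathcal{W}) = \mathbf{F}$ and the inclusion $\Omega_{\mathbf{x}} \subseteq \mathbf{F}$ for $\mathbf{x} \in \mathcal{W}(\mathds{Q})$, this shows that $\mathcal{W}(k) = \lambda(\mathbf{F}(\alpha_{k}))$ is exactly the set of $k$-inseparable points of $\mathcal{W}(\mathds{Q})$, i.e.\ the object denoted $\mathcal{W}(k)$ throughout \ref{Sub.Procedure-to-construct-the-Wk}. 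With this identification the three structural assertions are immediate: fullness (i) and everywhere-dimension $r-2$ (ii) are the content of the Proposition preceding \ref{Proposition.Characterisation-of-Elements-of-Wk}, and connectedness (iii) for $r > 2$ is the connectedness Proposition stated just above; simpliciality in the sense of Definition \ref{Definition.Simplicial-modular-form} then follows at once.

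Next I would dispatch the relations. The equality $\mathcal{BT}(k) = \Gamma\mathcal{W}(k)$ is the general fundamental-domain identity $\mathcal{BT}(f) = \Gamma\mathcal{W}(f)$; since the Weyl group $W$ is realized inside $\Gamma$ by permutation matrices, $\Gamma\mathcal{A}(k) = \Gamma W\mathcal{W}(k) = \Gamma\mathcal{W}(k)$, yielding $\mathcal{BT}(k) = \Gamma\mathcal{A}(k)$. The equation $\mathcal{A}(k) = W\mathcal{W}(k)$ is the definition, and $\mathcal{W}(k) = \mathcal{A}(k) \cap \mathcal{W}$ follows because $\mathcal{W}(k) \subseteq \mathcal{W}$ and $\mathcal{W}$ is a strict fundamental domain for $W$ on $\mathcal{A}$, so that $W\mathcal{W}(k) \cap \mathcal{W} = \mathcal{W}(k)$.

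The one relation that needs an actual argument is the recursion $\mathcal{A}(k+1) = W(\mathcal{A}(k) + \mathbf{n}_{r-1})$. Here I would start from the recursion Proposition ($\mathbf{x} \in \mathcal{W}(k+1) \Leftrightarrow \mathbf{x} - \mathbf{n}_{r-1} \in \mathcal{A}(k)$), which gives $\mathcal{W}(k+1) = (\mathcal{A}(k) + \mathbf{n}_{r-1}) \cap \mathcal{W}$ and hence $\mathcal{A}(k+1) = W\mathcal{W}(k+1) = W\big((\mathcal{A}(k) + \mathbf{n}_{r-1}) \cap \mathcal{W}\big)$. The inclusion $\subseteq W(\mathcal{A}(k) + \mathbf{n}_{r-1})$ is trivial; for the reverse I must show that every translate $\mathbf{a} + \mathbf{n}_{r-1}$ with $\mathbf{a} \in \mathcal{A}(k)$ is itself $(k+1)$-inseparable, even when it falls outside $\mathcal{W}$. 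This is where I expect the main obstacle to lie: since $\mathcal{A}(k) + \mathbf{n}_{r-1}$ is not $W$-invariant, one cannot simply conjugate the point into $\mathcal{W}$ and quote the recursion Proposition verbatim, because the chamber representative of $\mathbf{a} + \mathbf{n}_{r-1}$ need not be of the form $(k\text{-ins}) + \mathbf{n}_{r-1}$ (one has $w\mathbf{n}_{r-1} \neq \mathbf{n}_{r-1}$ in general). I would therefore reprove the inseparability equivalence in $W$-equivariant form directly from the characteristic sequence \eqref{Eq.Total-oder-on-B}, exploiting that passing from $\mathbf{x}$ to $\mathbf{x} + \mathbf{n}_{r-1}$ deletes precisely the minimal entry of each of the columns $1, \dots, r-1$ from the multiset of norms, and tracking how this deletion moves $\lambda_{k}$ by one position within its norm-block for an arbitrary ordering of the coordinates rather than only for $\mathbf{x} \in \mathcal{W}$. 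Once this $W$-equivariant recursion is in hand, $W(\mathcal{A}(k) + \mathbf{n}_{r-1})$ consists entirely of $(k+1)$-inseparable points, hence is contained in $\mathcal{A}(k+1)$, and the two inclusions close the argument.
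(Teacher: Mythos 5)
Your consolidation of (i), (ii), of simpliciality, and of the relations $\mathcal{BT}(k) = \Gamma\mathcal{W}(k) = \Gamma\mathcal{A}(k)$, $\mathcal{A}(k) = W\mathcal{W}(k)$, $\mathcal{W}(k) = \mathcal{A}(k) \cap \mathcal{W}$ is correct and is how the paper itself proceeds. But your disposal of (iii) has a genuine gap: the connectedness Proposition you cite concerns $\mathcal{W}(k)$ alone, whereas the Theorem asserts connectedness of $\mathcal{A}(k)$ and $\mathcal{BT}(k)$ as well, and supplying that is the \emph{entire} content of the paper's proof of this Theorem. One writes $\mathcal{A}(k) = \bigcup_{w \in W} w\mathcal{W}(k)$ and must glue the connected pieces $w\mathcal{W}(k)$ along common vertices: by Proposition \ref{Proposition.Characterisation-of-Elements-of-Wk} they all contain the $W$-fixed vertex $\mathbf{0}$ when $k \not\equiv 0 \pmod{r}$, and when $k \equiv 0 \pmod{r}$ they all contain the vertices $\mathbf{n}_{i}$ ($1 \leq i < r$), which lie on walls of $\mathcal{W}$ and are therefore shared by the pieces attached to adjacent chambers (this is where $r > 2$ enters); connectedness of $\mathcal{BT}(k)$ is then deduced from that of $\mathcal{A}(k)$. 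None of this appears in your proposal.

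Concerning the relation $\mathcal{A}(k+1) = W(\mathcal{A}(k) + \mathbf{n}_{r-1})$: you are right that it does not follow formally from the recursion Proposition, but the $W$-equivariant strengthening you propose to prove --- that every $\mathbf{a} + \mathbf{n}_{r-1}$ with $\mathbf{a} \in \mathcal{A}(k)$ is $(k+1)$-inseparable --- is false, so this step of your plan cannot be carried out. Take $r = 3$, $k = 1$ and $\mathbf{a} = (-5,-5,0)$, the Weyl image of the vertex $(5,0,0) \in \mathcal{W}_{2} = \mathcal{W}(1)$ (permute the first and third coordinates and renormalize so that $x_{3} = 0$); thus $\mathbf{a} \in \mathcal{A}(1)$. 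Then $\mathbf{a} + \mathbf{n}_{2} = (-4,-4,0)$ has $\mathds{F}$-spectrum (in logarithms) $-4,-4,-3,-3,-2,-2,\dots$, so it is $1$-inseparable but not $2$-inseparable, i.e. $\mathbf{a} + \mathbf{n}_{2} \notin \mathcal{A}(2)$. Your own multiset picture shows what goes wrong: passing to $\mathbf{a} + \mathbf{n}_{r-1}$ deletes the minimal entry of each of the first $r-1$ columns, and for $\mathbf{a}$ deep inside another chamber these $r-1$ deleted entries sit at the very bottom of the spectrum, so the inseparability indices are shifted down by $r-1$ instead of up by $1$; the hypothesis $\mathbf{x} \in \mathcal{W}(\mathds{Q})$ in the recursion Proposition is precisely what forces the shift to equal $1$. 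Consequently the displayed relation holds only as the inclusion $\mathcal{A}(k+1) \subseteq W(\mathcal{A}(k) + \mathbf{n}_{r-1})$; as an equality it is an overstatement in the Theorem itself. The statement that is both true and immediate --- from the recursion Proposition together with $\mathcal{A}(k+1) = W\mathcal{W}(k+1)$, i.e. from Procedure \ref{Sub.Procedure-to-construct-the-Wk} --- is $\mathcal{A}(k+1) = W\big((\mathcal{A}(k) + \mathbf{n}_{r-1}) \cap \mathcal{W}\big)$, and the intersection with $\mathcal{W}$ cannot be dropped. Since the paper's proof of the Theorem never argues for these relations (it treats them as already established and only proves connectedness), you are not missing a hidden argument; the correct course is to restate the recursion with the $\cap\,\mathcal{W}$ rather than to search for a stronger lemma.
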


\begin{proof}
	Everything has been established except for the connectedness of $\mathcal{A}(k)$ and $\mathcal{BT}(k)$ (which follows from the connectedness of $\mathcal{A}(k)$) in the case where $r > 2$.
	Now $\mathcal{A}(k) = \bigcup_{w \in W} w\mathcal{W}(k)$, and all the connected complexes $w\mathcal{W}(k)$ are joined through $\mathbf{0}$ (if $k \not\equiv 0 \pmod{r}$) or the 
	$\mathbf{n}_{i}$ ($1 \leq i < r$, otherwise).
\end{proof}

\begin{Remark} \label{Remark.Discussion-and-explanation-for-the-convenience-of-the-reader}
	As results from the description of $\mathcal{W}(k)$ and its pre-image in $\mathbf{F}$, the zero set $\mathbf{F}(\alpha_{k})$ is contained in 
	$\{ \boldsymbol{\omega} \in \mathbf{F} \mid \log \omega_{r-1} \leq k-1\}$. Hence $\alpha_{k}$ has no zeroes in 
	$\mathbf{F}_{(\omega_{r-1} > k-1)} \defeq \{ \boldsymbol{\omega} \in \mathbf{F} \mid \omega_{r-1} > k-1 \}$. It is easy to show (\cite{Gekeler-ta-1}, proof of Theorem 4.13) and follows also from the 
	procedure below that $\lvert \alpha_{k} \rvert$ is constant on $\mathbf{F}_{(\omega_{r-1} > k-1)}$ with value $\lvert \alpha_{k}(A) \rvert$, where $\log \alpha_{k}(A) = q(q^{k} -1)/(q-1) - kq^{k}$.
	
	For the reader's convenience we describe the spectral norm $\lVert \alpha_{k} \rVert_{\mathbf{n}}$ of $\alpha_{k}$ on the vertex $\mathbf{n}$ of $\mathcal{W}$. Recall that its logarithm 
	$\log_{q} \lVert \alpha_{k} \rVert_{\mathbf{n}}$ interpolates linearly in $\mathcal{W}(\mathds{Q})$. The proof is implicit in \cite{Gekeler-ta-1}, proof of Theorem 4.8, and is therefore omitted.
\end{Remark}

\begin{Procedure}[to determine $\lVert \alpha_{k} \rVert_{\mathbf{n}}$] \label{Procedure.To-determine-spectral-norm-of-alpha-k}
	Given $\mathbf{n} \in \mathcal{W}(\mathds{Z})$, let $\lambda_{1}, \lambda_{2}, \dots$ be its characteristic sequence in $V = K_{\infty}^{r}$. Put $c_{k} \defeq \log_{q} (\nu_{\mathbf{n}}(\lambda_{k}))$, where
	$\nu_{\mathbf{n}}$ is the norm on $V$ determined by $\mathbf{n}$. We have $c_{k+1} = c_{k}$ if $\lambda_{k}$, $\lambda_{k+1}$ are in the same cycle \eqref{Eq.Powers-of-T} and $c_{k+1} = c_{k}+1$ if $\lambda_{k+1}$
	starts a new cycle. Then
	\[
		\log_{q} \lVert \alpha_{k} \rVert_{\mathbf{n}} = -(q-1) \sum_{1 \leq j \leq k} q^{j-1} c_{j}.
	\]
\end{Procedure}

\begin{Corollary}
	$\lVert \alpha_{k} \rVert_{\mathbf{n}}$ has the following order properties:
	\begin{enumerate}[label=$\mathrm{(\roman*)}$]
		\item Fixing $\mathbf{n}$, $\lVert \alpha_{k} \rVert_{\mathbf{n}}$ decreases monotonically in $k$; strictly monotonically if $k > \sup \{ i \mid 1 \leq i < r \text{ and } n_{r-i+1} = 0 \}$;
		\item Fixing $k$, $\lVert \alpha_{k} \rVert_{\mathbf{n}}$ decreases monotonically in $\mathbf{n}$ with respect to the order \enquote{$\prec$} on $\mathcal{W}(\mathds{Z})$.
	\end{enumerate}
\end{Corollary}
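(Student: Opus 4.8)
The plan is to read both statements directly off the closed formula
\[
	\log_q \lVert \alpha_k \rVert_{\mathbf{n}} = -(q-1)\sum_{1 \le j \le k} q^{j-1} c_j
\]
of Procedure \ref{Procedure.To-determine-spectral-norm-of-alpha-k}, where $c_j = \log_q \nu_{\mathbf{n}}(\lambda_j)$ and $(\lambda_1, \lambda_2, \dots)$ is the characteristic sequence of $\mathbf{n}$. The only two facts I need about the $c_j$ are: first, they are non-decreasing in $j$, jumping by $0$ within a cycle and by $1$ from one cycle to the next; second, $c_j$ is simply the $j$-th smallest value of the function $T^s e_\ell \mapsto s + n_\ell$ on the index set $B = \{T^s e_\ell \mid s \ge 0,\ 1 \le \ell \le r\}$ (recall $\nu_{\mathbf{n}}(T^s e_\ell) = q^{s+n_\ell}$), the tie-breaking in \eqref{Eq.Total-oder-on-B} affecting only the identity of $\lambda_j$, not the value $c_j$.

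For part (i) I fix $\mathbf{n}$ and consider the one-step difference
\[
	\log_q \lVert \alpha_k \rVert_{\mathbf{n}} - \log_q \lVert \alpha_{k-1} \rVert_{\mathbf{n}} = -(q-1)\,q^{k-1} c_k .
\]
Since $\lambda_1 = e_r$ has $\nu_{\mathbf{n}}(e_r) = q^{n_r} = 1$, we have $c_1 = 0$ and hence $c_k \ge 0$ for all $k$; the difference is therefore $\le 0$, which is the asserted monotonic decrease. The step is strict exactly when $c_k > 0$, i.e.\ when $\lambda_k$ no longer lies in the initial cycle of norm $1$. That cycle consists precisely of the basis vectors $e_\ell$ with $n_\ell = 0$, so it has length $z := \#\{\ell \mid n_\ell = 0\}$, and $c_k > 0 \Leftrightarrow k > z$. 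A short index count shows that for $\mathbf{n} \neq \mathbf{0}$ one has $z = \sup\{i \mid 1 \le i < r,\ n_{r-i+1} = 0\}$, giving the stated threshold.

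For part (ii) I first reduce to a single generator step: since $\prec$ is the order induced by the free monoid generators $\mathbf{n}_1, \dots, \mathbf{n}_{r-1}$ of \ref{Subsub.The-set-WZ-is-monoid-freely-spanned}, any $\mathbf{n}' \succ \mathbf{n}$ is reached by successively adding generators while staying in $\mathcal{W}(\mathds{Z})$, and the relation ``$\lVert\alpha_k\rVert_{\mathbf{n}'} \le \lVert\alpha_k\rVert_{\mathbf{n}}$'' is transitive; so it suffices to prove $\lVert \alpha_k \rVert_{\mathbf{n}+\mathbf{n}_i} \le \lVert \alpha_k \rVert_{\mathbf{n}}$ for each $i$. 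Passing from $\mathbf{n}$ to $\mathbf{n}+\mathbf{n}_i$ replaces each value $s + n_\ell$ by $s + n_\ell + [\ell \le i]$, hence does not decrease the value of any element of $B$. Because the $j$-th order statistic of a family is monotone under pointwise domination, I get $c_j(\mathbf{n}+\mathbf{n}_i) \ge c_j(\mathbf{n})$ for every $j$. The coefficients $(q-1)q^{j-1}$ in the formula being positive, $\log_q\lVert\alpha_k\rVert$ then decreases, which is exactly (ii).

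I do not expect a genuine obstacle here: once Procedure \ref{Procedure.To-determine-spectral-norm-of-alpha-k} is in hand, everything is bookkeeping. The one point that deserves care is the identification of $c_j$ with the $j$-th order statistic of $s + n_\ell$, since the characteristic sequence is defined with a tie-break and one must be sure this does not corrupt the comparison in (ii); and in (i) one should verify that the threshold $\sup\{i \mid 1\le i<r,\ n_{r-i+1}=0\}$ really coincides with the first-cycle length $z$ (there is a harmless edge effect at $\mathbf{n}=\mathbf{0}$, where the first cycle has length $r$).
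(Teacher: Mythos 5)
Your proof is correct and follows essentially the same route as the paper's: both read the statement directly off Procedure \ref{Procedure.To-determine-spectral-norm-of-alpha-k}, reducing (i) to the sign of $c_{k}$ (zero exactly on the initial norm-one cycle) and (ii) to the term-by-term comparison $c_{j}(\mathbf{n}+\mathbf{n}_{i}) \geq c_{j}(\mathbf{n})$ of the characteristic sequences. Your order-statistics formulation, the explicit transitivity reduction to a single generator step, and the flagged edge case at $\mathbf{n} = \mathbf{0}$ simply make precise what the paper compresses into \enquote{$c_{k} > 0$ if $k$ is larger than the supremum} and \enquote{it is easily seen}.
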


\begin{proof}
	\begin{enumerate}[wide, label=(\roman*)]
		\item $c_{k} > 0$ if $k$ is larger than the supremum.
		\item Consider the characteristic sequences $\lambda_{1}, \lambda_{2}, \dots$ for $\mathbf{n}$ and $\lambda_{1}', \lambda_{2}', \dots$ for $\mathbf{n}' = \mathbf{n} + \mathbf{n}_{i}$ for some $i$.
		It is easily seen that always the equality $\nu_{\mathbf{n}'}(\lambda_{k}') \geq \nu_{\mathbf{n}}(\lambda_{k})$ holds.
	\end{enumerate}
\end{proof}

\begin{figure}[ht!]
	\centering
	\begin{tikzpicture}
	    \foreach \row in {0, 1, ...,\rows} {
	        \draw ($\row*(0.5, {0.5*sqrt(3)})$) -- ($(\rows,0)+\row*(-0.5, {0.5*sqrt(3)})$);
	        \draw ($\row*(1, 0)$) -- ($(\rows/2,{\rows/2*sqrt(3)})+\row*(0.5,{-0.5*sqrt(3)})$);
	        \draw ($\row*(1, 0)$) -- ($(0,0)+\row*(0.5,{0.5*sqrt(3)})$);
	    }
	    
	    \node[below] (0) at (0,0) {$\mathbf{0}$};
	    \node[below] (n1) at (1,0) {$\vphantom{2}\mathbf{n}_{1}$};
	    \node[below] (2n1) at (2,0) {$2\mathbf{n}_{1}$};
	    \node[left] (n2) at (0.5, {sqrt(3)/2}) {$\mathbf{n}_{2}$};
	    \node[left] (2n2) at (1, {sqrt(3)}) {$2\mathbf{n}_{2}$};
	    \node[above] (n1plusn2) at (1.5, {sqrt(3)/2}) {$\mathbf{n}_{1}+\mathbf{n}_{2}$};
	    
	    \draw (4,0) -- (6,0) node[below, pos=0.75] {$\mathcal{W}_{2}$};
	    \draw (4,0) -- ++(0.75,{0.75*sqrt(3)});
	    \draw (3.5, {sqrt(3)/2}) -- ++(0.75,{0.75*sqrt(3)});
	    \draw (3, {sqrt(3)}) -- ++(0.75,{0.75*sqrt(3)});
	    \draw (2.5, {3*sqrt(3)/2}) -- ++(0.75,{0.75*sqrt(3)});
	    \draw (2, {2*sqrt(3)}) -- ++(0.75,{0.75*sqrt(3)}) node[left, pos=0.9] {$\mathcal{W}_{1}$};
	\end{tikzpicture}
	\caption[The Weyl Chamber]{\emph{The Weyl Chamber}. Here and in Figure \ref{Figure2.Weyl-chambers-for-k-2-to-5} we present the $\mathcal{W}(k)$ for $r=3$ and $2 \leq k \leq 5$ as subsets of $\mathcal{W} \subset \mathcal{BT}^{3}$.} \label{Figure.The-Weyl-Chamber}
\end{figure}
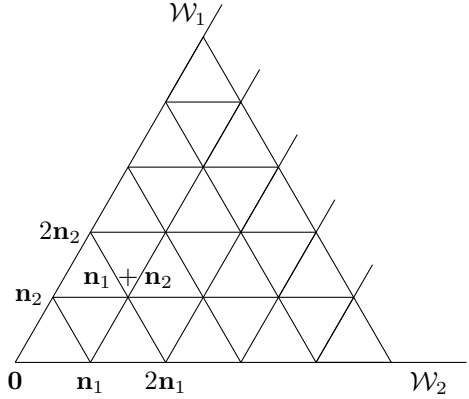
	
	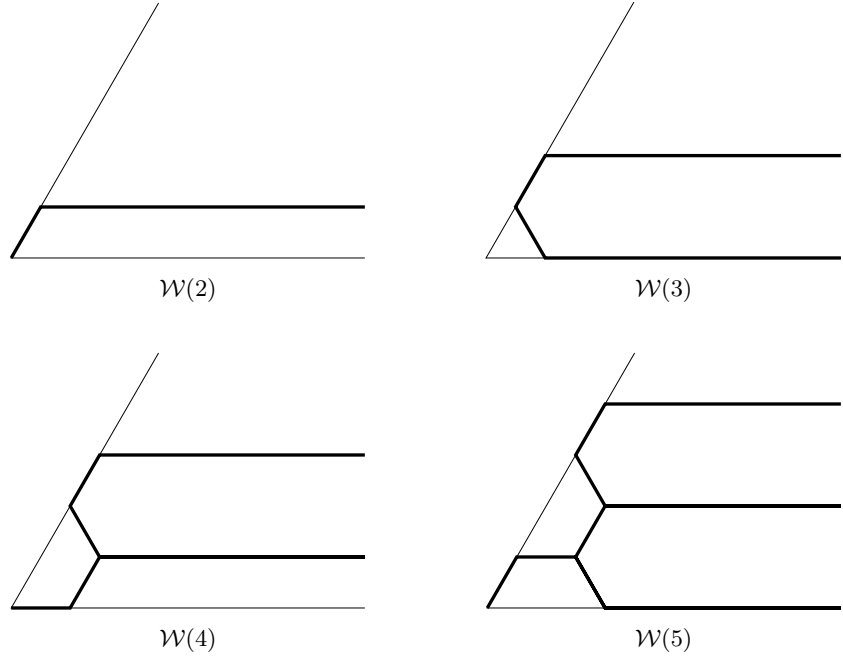
\begin{figure}[ht] 
		  \begin{subfigure}[b]{0.5\linewidth}
		    \centering
		     \resizebox{.75\textwidth}{!}{%
		    \begin{tikzpicture}
				%% Äußeres gleichseitiges Dreieck
				\draw (6,0) -- (0,0) -- (2.5, {5*sqrt(3)/2});
				% W(2)
				\draw[ultra thick] (0,0) -- (0.5,{sqrt(3)/2}) -- (6,{sqrt(3)/2});
				%\node[left] (W2) at (1.5,{3*sqrt(3)/2}) {$\mathcal{W}(2)$};
			\end{tikzpicture}%
			}
		    \caption*{$\mathcal{W}(2)$} 
		    \label{Fig.W2} 
		    \vspace{4ex}
		  \end{subfigure}%% 
		  \begin{subfigure}[b]{0.5\linewidth}
		    \centering
		     \resizebox{.75\textwidth}{!}{%
		   	\begin{tikzpicture}
				%% Äußeres gleichseitiges Dreieck
				\draw (6,0) -- (0,0) -- (2.5, {5*sqrt(3)/2});
				% W(3)
				\draw[ultra thick] (6,0) -- (1,0) -- (0.5,{sqrt(3)/2}) -- (1, {sqrt(3)}) -- (6,{sqrt(3)});
				%\node[left] (W3) at (1.5,{3*sqrt(3)/2}) {$\mathcal{W}(3)$};
			\end{tikzpicture}%
			}
		    \caption*{$\mathcal{W}(3)$} 
		    \label{Fig.W3} 
		    \vspace{4ex}
		  \end{subfigure} 
		  \begin{subfigure}[b]{0.5\linewidth}
		    \centering
		     \resizebox{.75\textwidth}{!}{%
		    \begin{tikzpicture}
				%% Äußeres gleichseitiges Dreieck
				\draw (6,0) -- (0,0) -- (2.5, {5*sqrt(3)/2});
				% W(4) 
				\draw[ultra thick] (0,0) -- (1,0) -- (1.5,{sqrt(3)/2}) -- (6,{sqrt(3)/2}) -- (1.5, {sqrt(3)/2}) -- (1,{sqrt(3)}) -- (1.5, {3*sqrt(3)/2}) -- (6, {3*sqrt(3)/2});
				%\node[left] (W4) at (1.5,{3*sqrt(3)/2}) {$\mathcal{W}(4)$};
			\end{tikzpicture}%
			}
		    \caption*{$\mathcal{W}(4)$} 
		    \label{Fig.W4} 
		  \end{subfigure}%%
		  \begin{subfigure}[b]{0.5\linewidth}
		    \centering
			  \resizebox{.75\textwidth}{!}{% 
			  		\begin{tikzpicture}
					%% Äußeres gleichseitiges Dreieck
					\draw (6,0) -- (0,0) -- (2.5, {5*sqrt(3)/2});
					% W(5) 
					\draw[ultra thick] (0,0) -- (0.5, {sqrt(3)/2}) -- (1.5, {sqrt(3)/2}) -- (2,0) -- (6,0) -- (2,0) -- (1.5, {sqrt(3)/2}) -- (2, {sqrt(3)}) -- (6, {sqrt(3)}) -- (2, {sqrt(3)}) -- (1.5, {3*sqrt(3)/2}) -- (2, {2*sqrt(3)}) -- (6, {2*sqrt(3)});
					%\node[left] (W5) at (1.5,{3*sqrt(3)/2}) {$\mathcal{W}(5)$};
				\end{tikzpicture}%
				}
		    \caption*{$\mathcal{W}(5)$} 
		    \label{Fig.W5} 
		  \end{subfigure} 
		  \caption{$\mathcal{W}(k)$, highlighted. See Figure \ref{Figure.The-Weyl-Chamber}.} \label{Figure2.Weyl-chambers-for-k-2-to-5}
		  \label{fig7} 
	\end{figure}

\section{The coefficient forms ${}_{a}\ell_{k}$}

\subsection{} In the whole section, $a$ is a fixed element of degree $d \geq 1$ of $A = \mathds{F}[T]$. The $a$-torsion $\{ z \in C_{\infty} \mid \phi_{a}(z) = 0\}$ is labelled by $_{a}\phi$. Here
$\phi = \phi^{\boldsymbol{\omega}}$ is the Drinfeld module of rank $r$ associated with $\boldsymbol{\omega} \in \Omega$. Then
\begin{equation} \label{Eq.Drinfeld.module-of-rank-r}
	\phi_{a}(X) = \sum_{0 \leq k \leq rd} {}_{a}\ell_{k}(\boldsymbol{\omega}) X^{q^{k}} = aX \sideset{}{'}\prod_{z \in _{a}\phi} (1 - X/z) = a e_{_{a}\phi}(X).
\end{equation}
That is, with notation as in \eqref{Eq.Exponential-function-associated-with-IF-lattice},
\begin{equation} \label{Eq.Coefficient-forms}
	{}_{a}\ell_{k} = \alpha_{k}(_{a}\phi).
\end{equation}
The vanishing of ${}_{a}\ell_{k}$ as a function in $\boldsymbol{\omega}$, or its potential vanishing in $\Omega_{\boldsymbol{x}}$ for $\boldsymbol{x} \in \mathcal{BT}(\mathds{Q})$, is therefore related with the
spectral properties of $_{a}\phi$. In this respect we have the following result, which is analogous with the corresponding statement for $\alpha_{k}$ (i.e., Theorem 4.8 in \cite{Gekeler-ta-1}, referred to in \ref{Subsub.Characterisation-of-BTIQ} and
\ref{Subsub.For-all-1-r-1-analytich-space-smooth}).

\begin{Theorem} \label{Theorem.Characterisation-of-elements-of-BTIQ}~
	\begin{enumerate}[label=$\mathrm{(\roman*)}$]
		\item For $\mathbf{x} \in \mathcal{BT}(\mathds{Q})$, the following are equivalent:
		\begin{enumerate}[label=$\mathrm{(\Alph*)}$]
			\item $\mathbf{x} \in \mathcal{BT}({}_{a}\ell_{k})$ (that is, there exists $\boldsymbol{\omega} \in \Omega_{\mathbf{x}} = \lambda^{-1}(\mathbf{x})$ such that ${}_{a}\ell_{k}(\boldsymbol{\omega}) = 0$);
			\item There exists $\boldsymbol{\omega} \in \Omega_{\mathbf{x}}$ such that the $\mathds{F}$-lattice $_{a}\phi^{\boldsymbol{\omega}}$ is $k$-inseparable;
			\item For each $\boldsymbol{\omega} \in \Omega_{\mathbf{x}}$, $_{a}\phi^{\boldsymbol{\omega}}$ is $k$-inseparable.
		\end{enumerate}
		\item For each nonempty subset $S$ of $\{1,2,\dots, r-1\}$, the analytic space $\bigcap_{i \in S} \Omega({}_a{\ell}_{i})$ is smooth of dimension $r-1 - \#(S)$.
	\end{enumerate}
\end{Theorem}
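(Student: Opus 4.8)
The plan is to prove the two parts by quite different means: part (i) is the genuinely new assertion and rests on transporting the spectral and vanishing principles of \ref{Subsub.Spectral-principle} and \ref{Sub.Vanishing-principle} from the lattice $\Lambda_{\boldsymbol{\omega}}$ to the finite $a$-torsion lattice ${}_a\phi^{\boldsymbol{\omega}}$, whereas part (ii) follows from the already known individual smoothness together with a transversality computation.

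For part (i) the organizing identity is \eqref{Eq.Coefficient-forms}, ${}_a\ell_k = \alpha_k({}_a\phi^{\boldsymbol{\omega}})$, so that the vanishing of ${}_a\ell_k$ at $\boldsymbol{\omega}$ is exactly the vanishing of the $k$-th para-Eisenstein coefficient of the finite $\mathds{F}$-lattice ${}_a\phi^{\boldsymbol{\omega}}$. I would first set up a characteristic sequence for ${}_a\phi^{\boldsymbol{\omega}}$ in the spirit of \eqref{Eq.Total-oder-on-B}, writing each torsion point as $e_{\boldsymbol{\omega}}(\mu)$ with $\mu$ running over representatives of $\frac{1}{a}\Lambda_{\boldsymbol{\omega}}/\Lambda_{\boldsymbol{\omega}}$ and ordering the points by $\log\lvert e_{\boldsymbol{\omega}}(\mu)\rvert$. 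These values are read off the Newton polygon of $e_{\boldsymbol{\omega}}$, which is determined by $\spec_A(\Lambda_{\boldsymbol{\omega}})$ and hence by $\mathbf{x}=\lambda(\boldsymbol{\omega})$ alone; consequently $\spec_{\mathds{F}}({}_a\phi^{\boldsymbol{\omega}})$, and with it the property of being $k$-inseparable, depends only on $\mathbf{x}$. This yields the equivalence (B) $\Leftrightarrow$ (C) at once, in parallel with the corresponding remark for $\Lambda_{\boldsymbol{\omega}}$ in \ref{Subsub.Characterisation-of-BTIQ}.

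The implication (A) $\Rightarrow$ (B) is then immediate from the easy half of the spectral principle \ref{Subsub.Spectral-principle}: if $\alpha_k({}_a\phi^{\boldsymbol{\omega}})=0$ then ${}_a\phi^{\boldsymbol{\omega}}$ is $k$-inseparable. The substantial direction is (C) $\Rightarrow$ (A), and here lies the main obstacle. The converse half of the spectral principle only produces an abstract $\mathds{F}$-lattice $\Lambda'$, isospectral to ${}_a\phi^{\boldsymbol{\omega}}$, with $\alpha_k(\Lambda')=0$; but there is no reason for such a $\Lambda'$ to arise as the $a$-torsion ${}_a\phi^{\boldsymbol{\omega}'}$ of a Drinfeld module with $\lambda(\boldsymbol{\omega}')=\mathbf{x}$, since the torsion lattice carries the extra $A/(a)$-module structure. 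To bypass this realizability gap I would invoke the vanishing principle \ref{Sub.Vanishing-principle}: it suffices to show that $k$-inseparability forces $\lvert{}_a\ell_k\rvert$ to be non-constant on the fibre $\Omega_{\mathbf{x}}$. Using the coefficient formula \eqref{Eq.Coefficients-of-Moore-determinant}, the condition $\nu_{\mathbf{x}}(\lambda_k)=\nu_{\mathbf{x}}(\lambda_{k+1})$ means that the two dominant contributions to the Moore minor computing $\alpha_k({}_a\phi^{\boldsymbol{\omega}})$ have equal absolute value, so that as $\boldsymbol{\omega}$ moves over $\Omega_{\mathbf{x}}$ their reductions can be brought into $\mathds{F}$-linear dependence; at such $\boldsymbol{\omega}$ the leading term drops and $\lvert{}_a\ell_k\rvert$ falls strictly below its generic value. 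Making this variation explicit---that is, showing that the degeneration locus is actually met inside $\Omega_{\mathbf{x}}$---is the crux, and it is precisely what the detailed analysis of the characteristic sequence of ${}_a\phi^{\boldsymbol{\omega}}$ on the fundamental domain is designed to supply; once non-constancy is in hand, the vanishing principle yields the desired zero.

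For part (ii) the smoothness of each single hypersurface $\Omega({}_a\ell_i)$ with $i\le r-1<r$ is \cite{Gekeler-ta-1} Theorem 4.19, so only the transversality of the family $\{\Omega({}_a\ell_i)\}_{i\in S}$ remains, which then furnishes codimension $\#(S)$ and hence dimension $r-1-\#(S)$. I would establish transversality by the differential method already used for Theorem \ref{Theorem.Smoothness-of-certain-vanishing-locei}, working on the cone $\Omega^{*}$ and descending by homogeneity. Because each ${}_a\ell_k$ is a polynomial in the basic coefficients $g_i$ and their Frobenius twists $g_i^{q^j}$ (as $\phi_a$ is obtained from $\phi_T$ through the $A$-action), and since $d(g_i^{q^j})=0$ for $j\ge 1$ in characteristic $p$, the differential $d({}_a\ell_k)$ is an explicit $C_\infty$-combination of the $dg_i$ alone. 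As the passage $\boldsymbol{\omega}\mapsto(g_1,\dots,g_r)$ is étale with $\det(\partial g_i/\partial\omega_j)$ nowhere zero (\cite{Gekeler-ta-1} Proposition 3.14, compare step (vi) of the proof of Theorem \ref{Theorem.Smoothness-of-certain-vanishing-locei}), the $dg_i$ form a frame, and transversality of the ${}_a\ell_i$, $i\in S$, reduces to a rank condition on the matrix of their linearized parts at a common zero; checking that this matrix has full rank $\#(S)$---via the recursion linking the ${}_a\ell_k$ to the $g_i$---completes the argument.
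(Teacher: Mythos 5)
Your architecture for part (i) matches the paper's own proof: (A)$\Rightarrow$(B) from the easy half of the spectral principle \ref{Subsub.Spectral-principle}; (B)$\Leftrightarrow$(C) by showing that the spectrum of ${}_{a}\phi^{\boldsymbol{\omega}}$ depends only on $\mathbf{x}$ and $\deg a$; and (B)$\Rightarrow$(A) via the vanishing principle \ref{Sub.Vanishing-principle}, reducing everything to the non-constancy of $\lvert {}_{a}\ell_{k} \rvert$ on $\Omega_{\mathbf{x}}$. You also correctly diagnose the realizability gap that blocks a direct use of the converse half of the spectral principle. But your proposal stops exactly at the decisive point: you declare that showing the degeneration locus is \enquote{actually met inside $\Omega_{\mathbf{x}}$} is the crux and that the characteristic-sequence analysis \enquote{is designed to supply} it --- without supplying it. The paper does supply it: writing $a^{-1}\,{}_{a}\ell_{k} = \sum_{S} P(S)$ as an elementary symmetric function of the inverted torsion points, it isolates the critical cycle $\lambda_{m+1}, \dots, \lambda_{n}$ of the characteristic sequence (with $m < k < n$ and $n-m \leq r$, the bound coming from \eqref{Eq.Permanent-estimate-absolute-value-es+1i-esi}), shows that the dominant terms collapse to $\alpha_{k}(V)$ up to smaller-order error, where $V = \sum_{1 \leq j \leq n} \mathds{F}\lambda_{j}$ is a finite lattice, and then imports the non-constancy of $\lvert \alpha_{k}(V) \rvert$ on the fiber verbatim from the proof of Theorem 4.8(i) of \cite{Gekeler-ta-1}. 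Without this reduction (or an equivalent argument), your Moore-determinant heuristic --- that the two dominant contributions \enquote{can be brought into $\mathds{F}$-linear dependence} as $\boldsymbol{\omega}$ varies --- is an assertion of precisely what has to be proved, not a proof. A secondary flaw: your claim that the values $\log \lvert e_{\boldsymbol{\omega}}(\mu) \rvert$ are \enquote{read off the Newton polygon of $e_{\boldsymbol{\omega}}$} is not correct as stated, since when $\lvert \mu \rvert$ equals the absolute value of lattice elements the factors $\lvert 1 - \mu/\lambda \rvert$ can drop below $1$; the paper's computation in \ref{Sub.Product-representation-of-esi} rules this out only by working on $\mathbf{F}$ and exploiting the orthogonality \ref{Subsub.Orthogonal-basis-of-lattice} of the SMB, an ingredient your sketch never invokes.

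For part (ii), note that the paper gives no proof at all: it simply cites \cite{Gekeler-ta-1} Theorem 4.9. Your plan to re-prove it by the differential method of Theorem \ref{Theorem.Smoothness-of-certain-vanishing-locei} is therefore a genuinely different route, and the observation that $d({}_{a}\ell_{k})$ is a $C_{\infty}$-combination of the $dg_{i}$ alone (Frobenius twists having vanishing differential) is sound as far as it goes; but the argument again ends in an unverified assertion, namely that the matrix of linearized parts has full rank $\#(S)$ at every common zero. That rank verification is the entire content of transversality, and nothing in your sketch indicates how the recursion expressing $\phi_{a}$ in terms of $\phi_{T}$ yields it. So both parts share the same defect: the scaffolding is right, but the load-bearing computation is left as a promise.
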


We note that (ii) has been shown in \cite{Gekeler-ta-1} Theorem 4.9. For the proof of (i), we need some preparations.
\subsection{} $_{a}\phi^{\boldsymbol{\omega}}$ is an $\mathds{F}$-vector space of dimension $rd$, with basis
\begin{equation}
	_{a}B_{\boldsymbol{\omega}} = \{ e_{\boldsymbol{\omega}}(T^{s}\omega_{i}/a) \mid 0 \leq s < d, 1 \leq i \leq r \},
\end{equation}
where $e_{\boldsymbol{\omega}} = e_{\Lambda_{\boldsymbol{\omega}}}$. Having fixed $a$ and $\boldsymbol{\omega}$, put for short 
\begin{equation}
	e_{s,i} \defeq e_{\boldsymbol{\omega}}(T^{s}\omega_{i}/a).
\end{equation}
In what follows, we assume that \fbox{$\boldsymbol{\omega} \in \mathbf{F}$}. Due to the characteristic property \ref{Subsub.Orthogonal-basis-of-lattice} of the SMB $\{\omega_{r}, \omega_{r-1}, \dots, \omega_{1}\}$ of 
$\Lambda_{\boldsymbol{\omega}}$, this eases the calculation of the $\lvert e_{s,i} \rvert$. 
\subsection{}\label{Sub.Product-representation-of-esi} Using the product representation
\[
	e_{s,i} = \frac{T^{s}\omega_{i}}{a} \sideset{}{'} \prod_{a_{1}, \dots, a_{r} \in A} \left( 1 - \frac{T^{s} \omega_{i}/a}{a_{1}\omega_{1} + \dots + a_{r} \omega_{r}} \right)
\]
we find
\begin{equation} \label{Eq.Absolute-value-of-esi}
	\lvert e_{s,i} \rvert = \lvert T^{s}\omega_{i}/a \rvert \sideset{}{'} \prod_{a_{i+1}, \dots, a_{r} \in A} \left| \frac{T^{s}\omega_{i}/a}{a_{i+1}\omega_{i+1} + \dots + a_{r}\omega_{r}} \right|,
\end{equation} \stepcounter{subsubsection}%
where the product $\sideset{}{'}{\textstyle\prod}$ is finite and over those $a_{i+1}, \dots, a_{r} \in A$ such that $\lvert \sum_{i < n \leq r} a_{n}\omega_{n} \rvert < \lvert T^{s} \omega_{i}/a \rvert$, i.e.,
$\lvert a_{n}\omega_{n} \rvert < \lvert T^{s} \omega_{i}/a \rvert$ for all $i < n \leq r$. Note that 
\subsubsection{} \stepcounter{equation}%
the product $\sideset{}{'}{\textstyle\prod}$ in \eqref{Eq.Absolute-value-of-esi}  may be empty, in which case it evaluates to 1, and
\begin{equation}
	\left\lvert 1 - \frac{T^{s}\omega_{i}/a}{a_{i+1}\omega_{i+1} + \dots + a_{r}\omega_{r}} \right\rvert = 1
\end{equation} \stepcounter{subsubsection}%
if the ratio on the right hand side has absolute value $1$.
\subsection{} Fix $s$ in \eqref{Eq.Absolute-value-of-esi}  and consider for $1 \leq i < r$ the ratio
\begin{equation}
	\lvert e_{s,i} / e_{s,i+1} \rvert = \left\lvert \frac{\omega_{i}}{\omega_{i+1}} \right\rvert \prod_{a_{i+1}, \dots, a_{r}} \frac{\lvert \omega_{i}/\omega_{i+1} \rvert}{\lvert a_{i+1}\omega_{i+1} + \dots + a_{r}\omega_{r} \rvert},
\end{equation}
where $a_{i+1}, \dots, a_{r} \in A$, $\lvert a_{n} \omega_{n} \rvert < \lvert T^{s} \omega_{i}/a \rvert$ for all $n$ with $i < n \leq r$ and at least one $n$ satisfies 
$\lvert a_{n}\omega_{n} \rvert \geq \lvert T^{s} \omega_{i+1}/a \rvert$. As $\lvert T^{s}/a \rvert < 1$, all the factors on the right hand side are $\geq 1$. We read off:
\begin{equation} \label{Eq.Estimate-absolute-value-of-esi-over-esi+1}
	\lvert e_{s,i} \rvert \geq \lvert e_{s,i+1} \rvert, \quad \text{with equality if and only if $\lvert \omega_{i} \rvert = \lvert \omega_{i+1} \rvert$, if and only if $\boldsymbol{\omega} \in \mathbf{F}_{i}$}.
\end{equation}
\subsection{} Now fix $i$ and consider for $0 \leq s < d-1$:
\begin{equation} \label{Eq.Absolute-value-of-quotient-es+1i-esi}
	\lvert e_{s+1,i} / e_{s,i} \rvert = \lvert T \rvert \sideset{}{'} \prod_{a_{i+1}, \dots, a_{r}} \lvert T \rvert \prod \frac{\lvert T^{s+1} \omega_{i}/a \rvert}{\lvert a_{i+1} \omega_{i+1} + \dots + a_{r}\omega_{r} \rvert},
\end{equation}
where the first product $\sideset{}{'}{\textstyle\prod}$ is over those $a_{i+1}, \dots, a_{r} \in A$ such that $\lvert a_{n} \omega_{n} \rvert < \lvert T^{s} \omega_{i}/a \rvert$ and the second product $\prod$ over those
$a_{i+1}, \dots, a_{r}$ such that $\lvert a_{n} \omega_{n} \rvert < \lvert T^{s+1} \omega_{i}/a \rvert$ but for at least one $n$ $\lvert a_{n} \omega_{n} \rvert \geq \lvert T^{s}\omega_{i}/a \rvert$ holds.
We remark that each of the factors $f$ of the second product satisfies
\begin{equation}
	1 < f \leq q \quad \text{and even $f=q$ if $\lambda(\boldsymbol{\omega}) \in \mathcal{W}(\mathds{Z})$}.
\end{equation}
Hence always
\begin{equation} \label{Eq.Permanent-estimate-absolute-value-es+1i-esi}
	\lvert e_{s+1,i} \rvert \geq q \lvert e_{s,i} \rvert.
\end{equation}
If $i=r$ then both products in \eqref{Eq.Absolute-value-of-quotient-es+1i-esi} are empty, and so
\begin{equation} \label{Eq.Absolute-values-of-es+1r-and-esr}
	\lvert e_{s+1,r} \rvert = q \lvert e_{s,r} \rvert \quad \text{and} \quad \lvert e_{s,r} \rvert = q^{s-d} \quad \text{as} \quad \lvert e_{0,r} \rvert = \lvert a \rvert^{-1} = q^{-d}.
\end{equation}

\begin{Remark}
	As a function on $\Omega$, $\boldsymbol{\omega} \mapsto e_{s,i}(\boldsymbol{\omega})$ vanishes nowhere, so its absolute value 
	$\lvert e_{s,i}(\boldsymbol{\omega}) \rvert \eqdef \lVert e_{s,i} \rVert_{\mathbf{x}}$ is constant on $\Omega_{\mathbf{x}}$, where $\mathbf{x} = \lambda(\boldsymbol{\omega})$. In fact, we see from
	\eqref{Eq.Absolute-value-of-esi}  that as long as $\mathbf{x} \in \mathcal{W}$, $\log_{q} \lVert e_{s,i} \rVert$ depends only on the $x_{i} = \log \omega_{i}$ and the dimensions of certain $\mathds{F}$-subspaces of $A$ defined by
	degree limitations that depend only on the $x_{i}$. Furthermore, the dependence on $a$ enters only via the degree $d$ of $a$.
\end{Remark}

We may resume the preceding discussion as follows.

\begin{Proposition} \label{Proposition.Absolute-values-of-the-esi}
	Let $\boldsymbol{\omega} \in \mathbf{F}$ with image $\lambda(\boldsymbol{\omega}) \eqdef \mathbf{x} \in \mathcal{W}$.
	\begin{enumerate}[label=$\mathrm{(\roman*)}$]
		\item The absolute values $\lvert e_{s,i} \rvert$ and therefore the spectrum of $_{a}\phi^{\boldsymbol{\omega}}$ depend only on $\mathbf{x}$ and $d = \deg a$.
	\end{enumerate}
	We have the following monotonicity properties of $\lvert e_{s,i} \rvert$:
	\begin{enumerate}[label=$\mathrm{(\roman*)}$] \stepcounter{enumi}%
		\item For $s$ fixed and $1 \leq i < r$, $\lvert e_{s,i} \rvert \geq \lvert e_{s,i+1} \rvert$, with equality if and only if $\lvert \omega_{i} \rvert = \lvert \omega_{i+1} \rvert$, i.e, $\mathbf{x} \in \mathcal{W}_{i}$;
		\item For $i$ fixed and $0 \leq s < d-1$, $\lvert e_{s+1,i} \rvert \geq q\lvert e_{s,i} \rvert$.
	\end{enumerate}
	If moreover $\mathbf{x} \in \mathcal{W}(\mathds{Z})$ then $\log_{q} \lVert e_{s,i} \rVert_{\mathbf{x}}$ is an integer larger or equal to ${-}d$. \hfill \mbox{$\square$}
\end{Proposition}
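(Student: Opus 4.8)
The plan is to read the four assertions directly off the explicit computations of \ref{Sub.Product-representation-of-esi} through \eqref{Eq.Absolute-values-of-es+1r-and-esr}; the statement is a recapitulation, so no genuinely new argument is needed. Concretely, (ii) is exactly \eqref{Eq.Estimate-absolute-value-of-esi-over-esi+1} and (iii) is exactly \eqref{Eq.Permanent-estimate-absolute-value-es+1i-esi}, while (i) and the closing integrality-and-bound claim are short consequences of the product formula \eqref{Eq.Absolute-value-of-esi} together with the orthogonality \ref{Subsub.Orthogonal-basis-of-lattice} of the SMB $\{\omega_{r}, \dots, \omega_{1}\}$.

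For (i) I would argue as follows. In \eqref{Eq.Absolute-value-of-esi} the finite index set consists of the tuples $(a_{i+1}, \dots, a_{r}) \in A^{r-i}$ subject to $\lvert a_{n}\omega_{n}\rvert < \lvert T^{s}\omega_{i}/a\rvert$ for all $i < n \leq r$; since $\lvert a_{n}\omega_{n}\rvert = q^{\deg a_{n} + x_{n}}$ and $\lvert T^{s}\omega_{i}/a\rvert = q^{s + x_{i} - d}$, this index set is prescribed by the $x_{n}$ and $d$ alone. By orthogonality each denominator satisfies $\lvert a_{i+1}\omega_{i+1} + \dots + a_{r}\omega_{r}\rvert = \sup_{n} \lvert a_{n}\omega_{n}\rvert$, so every factor of \eqref{Eq.Absolute-value-of-esi}, and hence $\lvert e_{s,i}\rvert$, is determined by $\mathbf{x}$ and $d$. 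As $\boldsymbol{\omega} \mapsto e_{s,i}(\boldsymbol{\omega})$ vanishes nowhere, $\lvert e_{s,i}\rvert$ is automatically constant on the fibre $\Omega_{\mathbf{x}}$ and equals $\lVert e_{s,i}\rVert_{\mathbf{x}}$; since the spectrum of ${}_{a}\phi^{\boldsymbol{\omega}}$ is the multiset of these absolute values, it too depends only on $\mathbf{x}$ and $d$.

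For the final assertion I would chain (ii) down to the last coordinate: iterating $\lvert e_{s,i}\rvert \geq \lvert e_{s,i+1}\rvert$ yields $\lvert e_{s,i}\rvert \geq \lvert e_{s,r}\rvert$, and \eqref{Eq.Absolute-values-of-es+1r-and-esr} gives $\lvert e_{s,r}\rvert = q^{s-d}$, whence $\log_{q}\lVert e_{s,i}\rVert_{\mathbf{x}} \geq s - d \geq -d$ because $s \geq 0$. When $\mathbf{x} \in \mathcal{W}(\mathds{Z})$ all the $x_{n}$ are integers, so every numerator and (using orthogonality again to evaluate the denominators) every denominator in \eqref{Eq.Absolute-value-of-esi} is an integer power of $q$; therefore $\lvert e_{s,i}\rvert$ is an integer power of $q$ and its $q$-logarithm is an integer.

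I do not expect a hard step here, precisely because all the analytic work---the passage from the product over $\Lambda_{\boldsymbol{\omega}}$ to the finite product \eqref{Eq.Absolute-value-of-esi}, and the estimate of each factor---has already been carried out in the preceding subsections. The only point that deserves a moment's care is the integrality claim, where one must confirm that evaluating the denominators via orthogonality genuinely produces integer exponents; this is immediate once $\mathbf{x}$ is a vertex, since then each $x_{n}$ and each $\deg a_{n}$ is an integer.
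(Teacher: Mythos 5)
Your proposal is correct and matches the paper's own treatment: the paper offers no separate proof, marking the proposition with $\square$ as a summary (``We may resume the preceding discussion'') of exactly the material you cite --- \eqref{Eq.Estimate-absolute-value-of-esi-over-esi+1} for (ii), \eqref{Eq.Permanent-estimate-absolute-value-es+1i-esi} for (iii), and the product formula \eqref{Eq.Absolute-value-of-esi} with SMB-orthogonality \ref{Subsub.Orthogonal-basis-of-lattice} for (i) and the integrality bound (the latter two points being spelled out in the Remark preceding the proposition and in \eqref{Eq.Absolute-values-of-es+1r-and-esr}). Your explicit verification of the integrality and of the bound $\log_{q}\lVert e_{s,i}\rVert_{\mathbf{x}} \geq s-d \geq -d$ fills in details the paper leaves implicit, but the route is the same.
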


\subsection{}\label{Sub.Suitably-ordered-F-SMB} As in \ref{Sub.x-in-WIQ}  we define a distinguished $\mathds{F}$-SMB of $_{a}\phi^{\boldsymbol{\omega}}$ by arranging the basis $_{a}B_{\boldsymbol{\omega}}$ in a suitable order. Define
\begin{equation}
	e_{s,i} \leq e_{s',i'} \Longleftrightarrow \lvert e_{s,i} \rvert < \lvert e_{s',i'} \rvert \quad \text{or} \quad (\lvert e_{s,i} \rvert = \lvert e_{s',i'} \rvert \text{ and } i > i').
\end{equation} \stepcounter{subsubsection}%
By \ref{Proposition.Absolute-values-of-the-esi}(iii) this is a total order on $_{a}B_{\boldsymbol{\omega}}$. All these data depend only on $\mathbf{x} = \lambda(\boldsymbol{\omega})$ and $d = \deg(a)$. To make this explicit, let 
$_{d}V_{\mathbf{x}}$ be an $\mathds{F}$-vector space of dimension $rd$ with basis $_{d}B_{\mathbf{x}} = \{ _{d}e_{s,i} \mid 0 \leq s < d, 1 \leq i \leq r \}$, where the $_{d}e_{s,i}$ are formal symbols. 
We define the isomorphism of $\mathds{F}$-vector spaces
\begin{equation}
	\begin{split}
		_{a}\kappa_{\boldsymbol{\omega}} \colon _{a}\phi^{\boldsymbol{\omega}} 	&\overset{\cong}{\longrightarrow} {}_{d}V_{\mathbf{x}}, \\
																														e_{s,i}							&\longmapsto {}_{d}e_{s,i}
	\end{split}
\end{equation} \stepcounter{subsubsection}%
and transfer both the absolute value \enquote{$\lvert \cdot \rvert$} on $_{a}\phi^{\boldsymbol{\omega}}$ and the order \enquote{$\leq$} on $_{a}B_{\boldsymbol{\omega}}$ to $_{d}V_{\mathbf{x}}$ resp. to
$_{d}B_{\mathbf{x}}$ via $_{a}\kappa_{\boldsymbol{\omega}}$. Then the structure of \enquote{normed $\mathds{F}$-vector space} of  $(_{d}V_{\mathbf{x}}, \lvert \cdot \rvert)$ depends only on 
$d=\deg(a)$ and $\mathbf{x} = \lambda(\boldsymbol{\omega})$, but not on $a$ and $\boldsymbol{\omega}$ themselves. For example,
\begin{equation}
	\Big\lvert \sum c_{s,i} \,{}_{d}e_{s,i} \Big\rvert = \Big\lvert \sum c_{s,i} e_{s,i}(\boldsymbol{\omega}) \Big\rvert = \sup_{s,i} \{ \lvert e_{s,i}(\boldsymbol{\omega}) \rvert \mid c_{s,i} \neq 0 \}
\end{equation} \stepcounter{subsubsection}%
if $c_{s,i}$ are coefficients in $\mathds{F}$. Let now
\subsubsection{} $\{ \lambda_{1}, \lambda_{2}, \dots, \lambda_{rd} \}$ be the set $_{d}B_{\mathbf{x}}$ arranged in increasing order with respect to \enquote{$\leq$}. We call 
$(\lambda_{1}, \lambda_{2}, \dots, \lambda_{rd})$ the \textbf{characteristic sequence} of $_{d}B_{\mathbf{x}}$. Its pre-image in $_{a}\phi^{\boldsymbol{\omega}}$ is an $\mathds{F}$-SMB, the 
\textbf{characteristic sequence} of $_{a}\phi^{\boldsymbol{\omega}}$. We are now ready to show part (i) of Theorem \ref{Theorem.Characterisation-of-elements-of-BTIQ}.

\begin{proof}[Proof of Theorem \ref{Theorem.Characterisation-of-elements-of-BTIQ}(i)]
	The implication (A)$\Rightarrow$(B) is obvious from the spectral principle \ref{Subsub.Spectral-principle}, and (B) $\Leftrightarrow$ (C) is Proposition \ref{Proposition.Absolute-values-of-the-esi}(i). As to (B)$\Rightarrow$(A), we follow the scheme outlined in the proof of
	Theorem 4.8(i), in \cite{Gekeler-ta-1}.
	\begin{enumerate}[wide, label=(\alph*)]
		\item Assume that $_{a}\phi^{\boldsymbol{\omega}}$ is $k$-inseparable for $\boldsymbol{\omega} \in \Omega$. Without restriction, $\boldsymbol{\omega} \in \mathbf{F}$, so 
		$\mathbf{x} = \lambda(\boldsymbol{\omega}) \in \mathcal{W}(\mathds{Q})$. We will use the vanishing principle \ref{Sub.Vanishing-principle}, by which it suffices to verify that $\lvert {}_{a}\ell_{k} \rvert$ is non-constant on
		$\mathbf{F}_{\mathbf{x}} = \Omega_{\mathbf{x}}$.
		\item We write $(\lambda_{1}, \lambda_{2}, \dots, \lambda_{rd})$ for the characteristic sequence of $_{d}B_{\mathbf{x}}$ transfered back to $_{a}\phi^{\boldsymbol{\omega}}$ via $_{a}\kappa_{\boldsymbol{\omega}}$. From \eqref{Eq.Drinfeld.module-of-rank-r} and \eqref{Eq.Coefficient-forms},
		\[
			{}_{a}\ell_{k}(\boldsymbol{\omega}) = a s_{q^{k}-1} \{ \lambda^{-1} \mid 0 \neq \lambda \in _{a}\phi^{\boldsymbol{\omega}} \} = a \sum_{S} P(S),
		\]
		where $s_{n}$ denotes the $n$-th elementary symmetric function, $S$ runs through the family of $(q^{k}-1)$-subsets of $_{a}\phi^{\boldsymbol{\omega}} \smallsetminus \{0 \}$ and 
		$P(S) = (\prod_{\lambda \in S} \lambda)^{-1}$.
		\item Let $m+1$ (resp. $n$) be the least (resp. largest) subscript $j$ such that $\lvert \lambda_{j} \rvert = \lvert \lambda_{k} \rvert = \lvert \lambda_{k+1} \rvert$. Then $m < k < n$ and, writing
		$\lambda_{j} = e_{s,i}$, the indices $i$ appearing in $\lambda_{m+1}, \lambda_{m+2}, \dots, \lambda_{n}$ are all different by \eqref{Eq.Permanent-estimate-absolute-value-es+1i-esi}. Therefore, $n-m \leq r$.
		\item Some $P(S)$ has largest absolute value if $S$ contains all the $q^{m}-1$ elements of $V' \smallsetminus \{0\}$ and $q^{k} - q^{m}$ elements of $V \smallsetminus V'$,
		where (here, in conflict with our general notation) $V = \sum_{1 \leq j \leq n} \mathds{F}\lambda_{j}$ and $V' = \sum_{1 \leq j \leq m} \mathds{F}\lambda_{j}$. The contribution of such $S$ to
		$a^{-1} {}_{a}\ell_{k}$ is
		\[
			P \defeq \Big( \sideset{}{'}\prod_{\lambda \in V'} \lambda \Big)^{-1} \sum_{\substack{S' \subset V \smallsetminus V' \\ \#(S') = q^{k} - q^{m}}} P(S'), \qquad P(S') = \Big( \prod_{\lambda \in S'} \lambda \Big)^{-1}.
		\]
		All the $P(S)$ of such $S$ have the same absolute value $\lvert P(S) \rvert \eqdef c$, which depends only on $\lvert \lambda_{1} \rvert, \dots, \lvert \lambda_{n} \rvert$. Write $x \equiv y$ if $\lvert x-y \rvert <c$.
		Then 
		\[
			a^{-1} {}_{a}\ell_{k} (\boldsymbol{\omega}) \equiv P \equiv \alpha_{k}(V),
		\]
		all of which are homogeneous functions of $\boldsymbol{\omega} \in \mathbf{F}_{\mathbf{x}}$ of weight $1 - q^{k}$, that is, functions $f$ on the cone $\mathbf{F}_{\mathbf{x}}^{*}$ in $\Omega^{*}$ above
		$\mathbf{F}_{\mathbf{x}}$ that satisfy $f(t\boldsymbol{\omega}) = t^{1-q^{k}}f(\boldsymbol{\omega})$ for constants $t \in C_{\infty}^{*}$ and $\boldsymbol{\omega} \in \mathbf{F}_{\mathbf{x}}^{*}$.
		\item We must show that $\lvert \alpha_{k}(V) \rvert$ is not constant on $\mathds{F}_{\mathbf{x}}$. This can now be copied verbatim from the corresponding parts of the proof of Theorem 4.8(i) in
		\cite{Gekeler-ta-1}. Therefore, condition (B) of Theorem \ref{Theorem.Characterisation-of-elements-of-BTIQ} implies (A), and the theorem is proved. \qedhere
	\end{enumerate}
\end{proof}

For later use we note the following fact.

\begin{Corollary}[to the preceding proof] \label{Corollary.Spectral-norm-of-coefficient-forms}
	Let $(\lambda_{1}, \lambda_{2}, \dots, \lambda_{rd})$ be the characteristic sequence of $_{d}V_{\mathbf{x}}$ and $W_{k}$ the $\mathds{F}$-subspace generated by $\lambda_{1}, \dots, \lambda_{k}$.
	Then
	\begin{equation} \label{Eq.Spectral-norm-of-coefficient-forms}
		\lvert a \rvert^{-1} \lVert {}_{a}\ell_{k} \rVert_{\mathbf{x}} = \sideset{}{'}\prod_{\lambda \in W_{k}} \lvert \lambda \rvert^{-1}.
	\end{equation}
\end{Corollary}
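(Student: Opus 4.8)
The plan is to read the spectral norm directly off the dominant-term structure exhibited in the proof of Theorem~\ref{Theorem.Characterisation-of-elements-of-BTIQ}(i). As established there, for $\boldsymbol\omega\in\mathbf{F}_{\mathbf{x}}$ one has $a^{-1}{}_a\ell_k(\boldsymbol\omega)=\sum_S P(S)$ with $P(S)=(\prod_{\lambda\in S}\lambda)^{-1}$, the sum running over the $(q^k-1)$-element subsets $S$ of $_a\phi^{\boldsymbol\omega}\smallsetminus\{0\}$. By Proposition~\ref{Proposition.Absolute-values-of-the-esi}(i) each $\lvert\lambda\rvert=\lvert e_{s,i}(\boldsymbol\omega)\rvert$ depends only on $\mathbf{x}$, so every $\lvert P(S)\rvert=\prod_{\lambda\in S}\lvert\lambda\rvert^{-1}$ is constant on $\Omega_{\mathbf{x}}$, and the ultrametric inequality yields the upper bound $\lvert a\rvert^{-1}\lvert{}_a\ell_k(\boldsymbol\omega)\rvert\le\max_S\lvert P(S)\rvert=:c$, hence $\lvert a\rvert^{-1}\lVert{}_a\ell_k\rVert_{\mathbf{x}}\le c$.

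I would then identify $c$ with the asserted product. The maximum of $\lvert P(S)\rvert$ is attained by taking $S$ to consist of the $q^k-1$ nonzero elements of least absolute value. The characteristic sequence $(\lambda_1,\dots,\lambda_{rd})$ is an orthogonal $\mathds{F}$-SMB: for $\lambda=\sum c_j\lambda_j$ with top index $i_0=\max\{j:c_j\neq0\}$ the SMB property gives $\lvert\lambda\rvert\ge\lvert\lambda_{i_0}\rvert$, while ultrametricity together with the monotonicity of the $\lvert\lambda_j\rvert$ gives $\lvert\lambda\rvert\le\lvert\lambda_{i_0}\rvert$, so $\lvert\lambda\rvert=\lvert\lambda_{i_0}\rvert$. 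Grouping the $q^k-1$ nonzero elements of $W_k$ by top index $i\in\{1,\dots,k\}$, of which there are $(q-1)q^{i-1}$, each of absolute value $\lvert\lambda_i\rvert$, gives
\[
	\sideset{}{'}\prod_{\lambda\in W_k}\lvert\lambda\rvert^{-1}=\prod_{i=1}^{k}\lvert\lambda_i\rvert^{-(q-1)q^{i-1}}.
\]
The $q^k-1$ smallest nonzero elements carry exactly this multiset of absolute values, whether or not $\mathbf{x}$ is $k$-inseparable (in the inseparable case one uses all of $W_m\smallsetminus\{0\}$ plus $q^k-q^m$ elements of norm $\lvert\lambda_k\rvert$, with $m,n$ as in step (c) of the proof), so $c=\prod'_{\lambda\in W_k}\lvert\lambda\rvert^{-1}$.

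It remains to show the bound is attained, i.e.\ that some $\boldsymbol\omega\in\Omega_{\mathbf{x}}$ satisfies $\lvert{}_a\ell_k(\boldsymbol\omega)\rvert=\lvert a\rvert c$. If $\mathbf{x}$ is $k$-separable this is automatic: the dominant subset $S=W_k\smallsetminus\{0\}$ is unique, no cancellation is possible, and $\lvert{}_a\ell_k\rvert\equiv\lvert a\rvert c$ on $\Omega_{\mathbf{x}}$. In general I would argue via the canonical reduction. Step (d) of the proof shows $a^{-1}{}_a\ell_k\equiv\alpha_k(V)$ modulo terms of absolute value $<c$, where $V=\sum_{1\le j\le n}\mathds{F}\lambda_j$ and $V'=\sum_{1\le j\le m}\mathds{F}\lambda_j$, so it suffices to prove $\lVert\alpha_k(V)\rVert_{\mathbf{x}}=c$. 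Writing $e_V=e_{e_{V'}(V)}\circ e_{V'}$ and collecting the dominant contribution, $\alpha_k(V)$ equals $\alpha_m(V')^{q^{k-m}}\alpha_{k-m}\!\left(e_{V'}(V)\right)$ up to terms of smaller norm; after scaling by $c^{-1}$ its reduction is, up to a unit, the $(k-m)$-th exponential coefficient $\bar\alpha_{k-m}$ of the reduced $(n-m)$-dimensional $\bar{\mathds{F}}$-lattice $\overline{V/V'}$. As $\boldsymbol\omega$ ranges over the positive-dimensional fibre $\Omega_{\mathbf{x}}$ this lattice varies over a whole family, and $\bar\alpha_{k-m}$ is not identically zero on it — precisely the non-constancy of $\lvert\alpha_k(V)\rvert$ established in step (e). Wherever it is nonzero one gets $\lvert\alpha_k(V)(\boldsymbol\omega)\rvert=c$, which yields $\lVert\alpha_k(V)\rVert_{\mathbf{x}}=c$ and hence the desired attainment.

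The hard part is exactly this last step: proving that the dominant terms $P(S)$ do not cancel simultaneously for \emph{every} $\boldsymbol\omega\in\Omega_{\mathbf{x}}$, equivalently that the reduced leading coefficient $\overline{c^{-1}\alpha_k(V)}$ is a nonzero rational function on $\bar\Omega_{\mathbf{x}}$. Note that here the spectral principle~\ref{Subsub.Spectral-principle} alone does not suffice, since on a $k$-inseparable fibre \emph{all} isospectral lattices are $k$-inseparable; what is needed is the non-degeneracy of the lower-dimensional coefficient $\bar\alpha_{k-m}$ as the reduced quotient lattice varies. This is the same phenomenon analysed in step (e) of the proof of Theorem~\ref{Theorem.Characterisation-of-elements-of-BTIQ}(i) (imported from \cite{Gekeler-ta-1}, proof of Theorem~4.8), and I would invoke that analysis to conclude that the maximal value $c$ is in fact reached.
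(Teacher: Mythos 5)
Your proof is correct and follows essentially the same route as the paper's: expand $a^{-1}\,{}_{a}\ell_{k} = \sum_{S} P(S)$ as in step (b) of the preceding proof, identify the right-hand side of \eqref{Eq.Spectral-norm-of-coefficient-forms} as the maximal $\lvert P(S) \rvert$, and show that the dominant terms cannot cancel identically on $\Omega_{\mathbf{x}}$. The only difference is one of explicitness: the paper compresses the non-cancellation step into the single assertion that the finitely many $P(S)$, as holomorphic functions on the affinoid $\mathbf{F}_{\mathbf{x}}$, cannot cancel, whereas you justify it (unique dominant term in the separable case; reduction to $\bar{\alpha}_{k-m}$ of the quotient lattice together with the analysis of step (e), i.e.\ of \cite{Gekeler-ta-1} Theorem 4.8(i), in the inseparable case), which is precisely the content that the paper's phrase and its designation as \enquote{Corollary to the preceding proof} implicitly invoke.
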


\begin{proof}
	As in (b) we use 
	\[
		a^{-1} {}_{a}\ell_{k} = \sum_{\substack{S \subseteq {}_{a}\phi^{\boldsymbol{\omega}} \smallsetminus \{0\} \\ \#(S) = q^{k}-1}} P(S), \qquad P(S) = \Big( \prod_{\lambda \in S} \lambda \Big)^{-1}.
	\]
	Now the right hand side of \eqref{Eq.Spectral-norm-of-coefficient-forms} is the size of the terms $P(S)$ of largest value. As holomorphic functions on the affinoid $\mathbf{F}_{\mathbf{x}}$, the finitely many $P(S)$ cannot cancel, i.e.,
	$\lVert \sum_{S} P(S) \rVert_{\mathbf{x}} = \sup_{S} \lVert P(S) \rVert_{\mathbf{x}}$.
\end{proof}

Our next task is to describe $\mathcal{W}({}_{a}\ell_{k})$, that is, to find out for which $\mathbf{x} \in \mathcal{W}(\mathds{Q})$ the corresponding spaces $_{a}\phi^{\boldsymbol{\omega}}$ (or rather the spaces
$_{d}V_{\mathbf{x}}$) are $k$-inseparable. As ${}_{a}\ell_{k} = 0$ if $k > rd$, and has no zeroes on $\Omega$ if $k=rd$, we always tacitly assume that $k<rd$.

\begin{Definition}
	Put 
	\begin{multline*}
		\mathcal{W}_{d}'(k) \defeq \mathcal{W}({}_{a}\ell_{k}) = \{ \mathbf{x} \in \mathcal{W}(\mathds{Q}) \mid {}_{d}V_{\mathbf{x}} \text{ is $k$-inseparable} \} = \lambda(\mathbf{F}({}_{a}\ell_{k}) \\
			\text{and} \qquad \mathcal{BT}_{d}'(k) \defeq \Gamma \mathcal{W}_{d}'(k) = \lambda(\Omega( {}_{a}\ell_{k} ),
	\end{multline*}
	where $a \in A$ is some element of degree $d$. 
\end{Definition}

Then, as ${}_{T}\ell_{1} = g_{1} = [T^{q}-T]\alpha_{1}$, we have 
\begin{equation}
	\mathcal{W}_{1}'(1) = \mathcal{W}(1) = \mathcal{W}_{r-1}.
\end{equation}
There is a very satisfactory description of $\mathcal{W}_{d}'(k)$, and, related, of $\lVert {}_{a}\ell_{k} \rVert_{\mathbf{x}}$, provided that $k \leq d$.

\begin{Theorem} \label{Theorem.On-simpliciality-of-modular-forms}
	Suppose that $k \leq d = \deg a$. Then
	\begin{enumerate}[label=$\mathrm{(\roman*)}$]
		\item $\mathcal{BT}_{d}'(k) = \mathcal{BT}(k) = \mathcal{BT}(\alpha_{k})$;
		\item The modular form ${}_{a}\ell_{k}$ is simplicial;
		\item The van der Put transform $P({}_{a}\ell_{k})$ agrees with $P(\alpha_{k})$;
		\item $\log {}_{a}\ell_{k}(\boldsymbol{\omega})$ is constant on the subspace $\mathbf{F}_{(\omega_{r-1} > k-1)}$ of $\mathbf{F}$ with value 
		$\log {}_{a}\ell_{k}(\boldsymbol{\omega}) = \log {}_{a}\ell_{k}(A) = (d-k)q^{k} + q(q^{k}-1)/(q-1)$;
		\item For each $\mathbf{n} \in \mathcal{BT}(\mathds{Z})$, the local inner degrees $N_{\mathbf{n}}({}_{a}\ell_{k})$ and $N_{\mathbf{n}}(\alpha_{k})$ agree.
	\end{enumerate}
\end{Theorem}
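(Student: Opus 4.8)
The plan is to prove the five assertions together by exploiting the one unifying idea that underlies the whole section: under the hypothesis $k\le d$, the \emph{bottom} of the characteristic sequence of the finite $\mathds{F}$-lattice $_a\phi^{\boldsymbol{\omega}}$ (equivalently, of $_dV_{\mathbf{x}}$) looks exactly like the bottom of the characteristic sequence of the full $A$-lattice $\Lambda_{\boldsymbol{\omega}}$ that governs $\alpha_k$. Concretely, the basis elements $e_{s,i}=e_{\boldsymbol{\omega}}(T^s\omega_i/a)$ satisfy $\lvert e_{s,r}\rvert = q^{s-d}$ by \eqref{Eq.Absolute-values-of-es+1r-and-esr}, and more generally \eqref{Eq.Estimate-absolute-value-of-esi-over-esi+1} and \eqref{Eq.Permanent-estimate-absolute-value-es+1i-esi} show that the $\lvert e_{s,i}\rvert$ obey the same monotonicity pattern (in $i$ for fixed $s$, in $s$ for fixed $i$) as the norms $\nu_{\mathbf{x}}(T^se_i)$ of \ref{Sub.x-in-WIQ}. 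So first I would prove the key lemma: \emph{for indices up to $k\le d$, the characteristic sequence $(\lambda_1,\dots,\lambda_{rd})$ of $_dV_{\mathbf{x}}$ has the same cycle structure (same pattern of equalities and strict jumps among the $\lvert\lambda_j\rvert$) as the characteristic sequence of $\mathbf{x}$ in $V$ used for $\alpha_k$}. The constraint $k\le d$ is exactly what guarantees that within the first $k$ entries one never runs out of powers $T^s$ with $0\le s<d$, so the truncation to $d$ torsion levels is invisible in this range.

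Granting that lemma, assertion (i) is immediate: by Theorem \ref{Theorem.Characterisation-of-elements-of-BTIQ}(i), $\mathbf{x}\in\mathcal{BT}({}_a\ell_k)$ iff $_a\phi^{\boldsymbol{\omega}}$ is $k$-inseparable, i.e.\ iff $\lvert\lambda_k\rvert=\lvert\lambda_{k+1}\rvert$ in the $_dV_{\mathbf{x}}$-sequence; by the lemma this equality holds iff the corresponding equality holds in the $\Lambda_{\boldsymbol{\omega}}$-sequence, which by \ref{Subsub.Characterisation-of-BTIQ} is exactly the condition $\mathbf{x}\in\mathcal{BT}(\alpha_k)=\mathcal{BT}(k)$. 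Thus $\mathcal{BT}_d'(k)=\mathcal{BT}(k)$, and (ii) follows since $\alpha_k$ is simplicial by Theorem \ref{Theorem.Characterisation-of-certain-subsets-of-BTIQ}. For (iv) I would combine Corollary \ref{Corollary.Spectral-norm-of-coefficient-forms}, which gives $\lvert a\rvert^{-1}\lVert{}_a\ell_k\rVert_{\mathbf{x}}=\sideset{}{'}\prod_{\lambda\in W_k}\lvert\lambda\rvert^{-1}$ with $W_k=\langle\lambda_1,\dots,\lambda_k\rangle$, with the explicit values $\lvert e_{s,r}\rvert=q^{s-d}$: on the region $\mathbf{F}_{(\omega_{r-1}>k-1)}$ the first $k$ characteristic vectors are forced to be $e_{0,r},e_{1,r},\dots,e_{k-1,r}$, so the product is over the span of these and evaluates to the stated closed form; the additive constant $(d-k)q^k$ records precisely the shift by the torsion denominator $a$ relative to the $\alpha_k(A)$ computation in Remark \ref{Remark.Discussion-and-explanation-for-the-convenience-of-the-reader}.

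For (iii), I would compute $P({}_a\ell_k)$ on an arbitrary type-$1$ arrow $(\mathbf{x},\mathbf{y})$ via its definition \eqref{Eq.Definition-van-der-Put-transform} as $\log_q(\lVert{}_a\ell_k\rVert_{\mathbf{y}}/\lVert{}_a\ell_k\rVert_{\mathbf{x}})$, using Corollary \ref{Corollary.Spectral-norm-of-coefficient-forms} at both endpoints. Moving from $\mathbf{x}$ to a neighbour changes the spanning set $W_k$ in a way dictated entirely by how the characteristic sequence reshuffles, and by the key lemma this reshuffling in the first $k$ slots is identical to the one controlling $\lVert\alpha_k\rVert$ via Procedure \ref{Procedure.To-determine-spectral-norm-of-alpha-k}; hence the two difference quotients coincide arrow by arrow and $P({}_a\ell_k)=P(\alpha_k)$. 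Finally (v) follows from (iii) by Proposition \ref{Proposition.Relation-local-inner-degree-van-der-Put-transform}: for each vertex $\mathbf{n}$, summing the (now equal) van der Put transforms over the type-$1$ arrows $\mathbf{A}_{\mathbf{n},1}$ gives $N_{\mathbf{n}}({}_a\ell_k)=\sum_{e}P({}_a\ell_k)(e)=\sum_{e}P(\alpha_k)(e)=N_{\mathbf{n}}(\alpha_k)$.

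The main obstacle is the key lemma itself — the bookkeeping that the first $k$ entries of the $_dV_{\mathbf{x}}$ characteristic sequence genuinely match those of the $\Lambda_{\boldsymbol{\omega}}$ sequence. The subtlety is that the $\lvert e_{s,i}\rvert$ are \emph{not} simply $q^{s}\nu_{\mathbf{x}}(e_i)$; by \eqref{Eq.Absolute-value-of-esi} they carry extra product corrections coming from lattice points with $\lvert a_n\omega_n\rvert<\lvert T^s\omega_i/a\rvert$, and one must check that for $\mathbf{x}\in\mathcal{W}(\mathds{Z})$ these corrections (quantified in \ref{Proposition.Absolute-values-of-the-esi}) do not disturb the \emph{ordering} and the cycle boundaries among the first $k$ elements, only the absolute magnitudes. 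I expect this to require a careful comparison of the two total orders \eqref{Eq.Total-oder-on-B} and the one defined in \ref{Sub.Suitably-ordered-F-SMB}, tracking where the inequalities \eqref{Eq.Estimate-absolute-value-of-esi-over-esi+1} and \eqref{Eq.Permanent-estimate-absolute-value-es+1i-esi} are strict versus equalities, and verifying that the hypothesis $k\le d$ prevents the $s$-truncation from prematurely terminating any cycle that $\alpha_k$ would continue. Once the combinatorial match is pinned down at the level of vertices $\mathbf{n}\in\mathcal{W}(\mathds{Z})$, the passage to general $\mathbf{x}\in\mathcal{W}(\mathds{Q})$ and to $\mathcal{BT}(\mathds{Q})$ follows by the affineness of $\log_q\lVert\cdot\rVert$ and $\Gamma$-equivariance, exactly as for $\alpha_k$.
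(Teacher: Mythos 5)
Your plan is correct, and on parts (i), (ii) and (v) it is essentially the paper's own argument: the paper proves exactly your key lemma by means of the comparison map $\varepsilon_{d}$ of its proof of (i), showing $\log e_{s,i}=x_{i}+s-d$ whenever $x_{i}+s\le d$ (the correction products in \eqref{Eq.Absolute-value-of-esi} are then empty), so that the first $d$ terms of the characteristic sequence of $\mathbf{x}$ in $V$ are carried onto those of ${}_{d}V_{\mathbf{x}}$ with a uniform shift of $-d$ in logarithms; the boundary case $k=d$ (where $\lambda_{k+1}^{*}$ could have norm $>q^{d}$) is disposed of by a separate short contradiction --- precisely the truncation subtlety you flag, and note that inseparability at index $k$ involves the $(k+1)$-st entry, so your lemma must reach one step past $k$. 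Where you genuinely diverge is (iii)--(iv). Your route: the key lemma together with Corollary \ref{Corollary.Spectral-norm-of-coefficient-forms} and Procedure \ref{Procedure.To-determine-spectral-norm-of-alpha-k} yields the exact identity $\log_{q}\lVert{}_{a}\ell_{k}\rVert_{\mathbf{x}}=dq^{k}+\log_{q}\lVert\alpha_{k}\rVert_{\mathbf{x}}$ at every vertex of $\mathcal{W}$, so the two log-norm functions differ by a constant, their difference quotients agree on arrows in $\mathcal{W}$, and equality of the weights transports this to all of $\mathcal{BT}$ via \eqref{Eq.Van-der-Put-of-simplicial-modular-form}; (iv) then falls out of the same identity. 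The paper instead proves (iii) by a limit argument: using the inclusion ${}_{d}V_{\mathbf{x}}\hookrightarrow{}_{d+1}V_{\mathbf{x}}$ and Corollary \ref{Corollary.Spectral-norm-of-coefficient-forms} it shows that raising $\deg a$ by one multiplies $\lVert{}_{a}\ell_{k}\rVert_{\mathbf{x}}$ by exactly $q^{q^{k}}$, hence $P({}_{a}\ell_{k})$ is independent of $a$ once $\deg a\ge k$ (see \ref{Subsub.Independence-of-van-der-Put-transforms}), and then the locally uniform convergence ${}_{a}\tilde{\ell}_{k}\to\tilde{\alpha}_{k}$ of \eqref{Eq-Lim-coefficient-forms} (from \cite{Gekeler-ta-1}) identifies the common transform with $P(\alpha_{k})$ on the star of each vertex; (iv) is then deduced from (iii) and the classical value of $\log{}_{a}\ell_{k}(A)$. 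Your version buys an explicit constant and makes (iv) a one-line corollary, but it leans on Procedure \ref{Procedure.To-determine-spectral-norm-of-alpha-k} (stated in the paper with proof only cited) and requires verifying the norm-matching at \emph{all} vertices, including those with $k\le x_{r-1}$, which the paper's case (a) treats only at the level of separability --- the $\varepsilon_{d}$ argument does cover them, so this is a matter of bookkeeping, not a gap; the paper's version outsources exactly that bookkeeping to the convergence theorem.
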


Before starting with the proof, some remarks are in order.

\begin{Remarks-nn}
	\begin{enumerate}[wide, label=(\alph*)]
		\item Item (ii) of the theorem follows from (i). Hence in particular $P({}_{a}\ell_{k})$ is defined.
		\item Suppose that $f$ and $g$ are simplicial modular forms, such that $P(f)$ and $P(g)$ are defined. Then $P(f) = P(g)$ means that the functions $\mathbf{x} \mapsto \lVert f \rVert_{\mathbf{x}}$ and
		$\mathbf{x} \mapsto \lVert g \rVert_{\mathbf{x}}$ on $\mathcal{BT}(\mathds{Q})$ agree up to a multiplicative constant. Hence (iv) follows from (iii) and the corresponding statement for $\alpha_{k}$,
		see Remark \ref{Remark.Discussion-and-explanation-for-the-convenience-of-the-reader}. The formula for $\log ( {}_{a}\ell_{k}(A))$ in (iv) is standard: ${}_{a}\ell_{k}$ is the coefficient of the Drinfeld module that corresponds to the $A$-lattice $A$, which is a twist of the Carlitz
		module. The latter corresponds to the rank-1 lattice $\bar{\pi}A$ (\cite{Gekeler1988} Section 4). Now $\log {}_{a}\ell_{k}(\bar{\pi}A) = (d-k)q^{k}$ (loc. cit. (4.5)), and the term $q(q^{k}-1)/(q-1)$ takes care
		for the twist by $\bar{\pi}$.
		\item By Proposition \ref{Proposition.Relation-local-inner-degree-van-der-Put-transform}, $P(f)$ determines all the $N_{\mathbf{n}}(f)$. Therefore (v) is also a consequence of (iii).
		\item Also, once we know that $f$ is simplicial, $P(f)$ determines $\mathcal{BT}(f)$ through $\mathcal{BT}(f)(\mathds{Z}) = \{ \mathbf{n} \in \mathcal{BT}(\mathds{Z}) \mid N_{\mathbf{n}}(f) >0 \}$.
		But as long as the simpliciality of ${}_{a}\ell_{k}$ is not established, we cannot conclude (i) from (iii). This is why we first show (i).
	\end{enumerate}
\end{Remarks-nn}

\begin{proof}[Proof of Theorem \ref{Theorem.On-simpliciality-of-modular-forms}]
	It suffices to show (i) and (iii) on the fundamental domain $\mathcal{W}$, i.e., that $\mathcal{W}_{d}'(k) = \mathcal{W}(k)$ and that the transforms $P({}_{a}\ell_{k})$ and $P(\alpha_{k})$ agree on $\mathcal{W}$.
	\begin{enumerate}[label=(\roman*), wide]
		\item \begin{enumerate}[label=(\alph*)]
				\item Let $\mathbf{x} \in \mathcal{W}(\mathds{Q})$ and $\boldsymbol{\omega} \in \mathbf{F}_{\mathbf{x}} = \lambda^{-1}(\mathbf{x})$, and assume first that \fbox{$k \leq x_{r-1}$}.
				Then by the formulas \eqref{Eq.Absolute-value-of-esi} , \eqref{Eq.Absolute-values-of-es+1r-and-esr} and the definition in \ref{Sub.Suitably-ordered-F-SMB}  of $_{d}V_{\mathbf{x}}$ and its characteristic sequence $(\lambda_{1}, \lambda_{2}, \dots, \lambda_{rd})$, the $j$-th term
				$\lambda_{j}$ equals $_{d}e_{j-1,r}$ as long as $j \leq d$ and $j \leq x_{r-1}+1$. Thus for \fbox{$k < d$} $\lambda_{k} = {}_{d}e_{k-1,r}$, $\lambda_{k+1} = {}_{d}e_{k,r}$, and so 
				$\lvert \lambda_{k+1} \rvert > \lvert \lambda_{k} \rvert$. 
				
				If \fbox{$k = d \leq x_{r-1}$}, $\lambda_{k} = {}_{d}e_{k-1,r}$, $\lambda_{k+1} = {}_{d}e_{0,r-1}$, $\log \lambda_{k} = k-1 - d = -1$, $\log \lambda_{k+1} \geq x_{r-1} - d \geq 0$, so 
				$\lvert \lambda_{k+1} \rvert > \lvert \lambda_{k} \rvert$, too. Hence $\mathbf{x} \notin W_{d}'(k)$ for
				$k \leq x_{r-1}$ and $\mathbf{x} \notin \mathcal{W}(k)$ by \ref{Remark.Discussion-and-explanation-for-the-convenience-of-the-reader}.
				\item We may therefore assume that $k > x_{r-1}$, i.e., \fbox{$x_{r-1} < k \leq d$}. Let $i$ be an index with $1 \leq i < r$ and such that $x_{i} = \log \omega_{i} < k$. Then the condition on $a_{n} \in A$:
				\[
					\deg a_{n} + x_{n} < x_{i} - d \qquad \text{for } i < n \leq r
				\]
				can be achieved only for $a_{n} = 0$, $\deg a_{n} = {-}\infty$, as $x_{n} \geq 0$ and $x_{i} - d \leq x_{i} - k < 0$. Therefore the product $\sideset{}{'}{\textstyle\prod}$ in \eqref{Eq.Absolute-value-of-esi}  for the calculation of
				$\lvert e_{0,i} \rvert$ is empty, and so
				\begin{equation} \label{Eq.log-e0i-and-esi}
					\log e_{0,i} = x_{i} -d. \quad \text{Likewise,} \quad \log e_{s,i} = x_{i} + s-d,
				\end{equation} \stepcounter{subsubsection}%
				as long as the right hand side is $\leq 0$.
				\item Let $V = K_{\infty}^{r}$ with standard basis $\{e_{1}, \dots, e_{r}\}$ and the norm $\nu_{\mathbf{x}}$ be as in \ref{Sub.x-in-WIQ} . We consider 
				\[
					_{d} V \defeq \Big\{ \sum_{1 \leq i \leq r} a_{i}e_{i} \Bigm| a_{i} \in A, \deg a_{i} < d \text{ for all $i$} \Big\},
				\]
				an $\mathds{F}$-space of dimension $rd$. Under 
				\begin{equation}
					\varepsilon_{d} \colon T^{s}\omega_{i} \longmapsto {}_{d}e_{s,i}
				\end{equation} \stepcounter{subsubsection}%
				it maps isomorphically onto $_{d}V_{\mathbf{x}}$. The elements $\mathbf{v}$ of $_{d}V$ with $\log_{q}(\nu_{\mathbf{x}}(\mathbf{v}) \leq d$ are the $\mathds{F}$-linear combinations of the $T^{s}e_{i}$
				with $s+x_{i} \leq d$, and for such $\mathbf{v}$,
				\begin{equation} \label{Eq.log-epsilond}
					\log \varepsilon_{d}(\mathbf{v}) = \log_{q} ( \nu_{\mathbf{x}}(\mathbf{v}) ) - d,
				\end{equation} \stepcounter{subsubsection}%
				as follows from \eqref{Eq.log-e0i-and-esi} and the similar formula $\log e_{s,r} = \log(T^{s}\omega_{r}) - d = s-d$.
				\item Let $\lambda_{1}^{*}, \lambda_{2}^{*}, \dots$ be the characteristic sequence of $\mathbf{x}$ in $V$ as defined in \ref{Sub.x-in-WIQ}  and $(\lambda_{1}, \lambda_{2}, \dots, \lambda_{rd})$ that of
				${}_{d}V_{\mathbf{x}}$. By \eqref{Eq.log-epsilond},  $\lvert \varepsilon_{d}(\mathbf{v}) \rvert = q^{-d} \nu_{\mathbf{x}}(\mathbf{v})$ if $\mathbf{v} \in {}_{d}V$ satisfies $\nu_{\mathbf{x}}(\mathbf{v}) \leq q^{d}$.
				Thus at least for $k \leq d$, $\varepsilon_{d}(\lambda_{k}^{*}) = \lambda_{k}$ holds. In conjunction with \eqref{Eq.log-epsilond} this implies 
				\begin{equation} \label{Eq.Characterisation-of-equality-of-absolute-value-of-lambdaks}
					\lvert \lambda_{k} \rvert = \lvert \lambda_{k+1} \rvert \Longleftrightarrow \nu_{\mathbf{x}}(\lambda_{k}^{*}) = \nu_{\mathbf{x}}(\lambda_{k+1}^{*})
				\end{equation} \stepcounter{subsubsection}%
				for $k < d$. Hence suppose \fbox{$x_{r-1} < k = d$}. If $\nu_{\mathbf{x}}(\lambda_{k+1}^{*}) \leq q^{d}$, then $\varepsilon_{d}(\lambda_{k+1}^{*}) = \lambda_{k+1}$ and by \eqref{Eq.log-epsilond}, \eqref{Eq.Characterisation-of-equality-of-absolute-value-of-lambdaks} still holds.
				If however $\nu_{\mathbf{x}}(\lambda_{k+1}^{*}) > q^{d}$ then $\lambda_{k+1}^{*} = \omega_{r-1}$ (since all the $d$ norms $\nu_{\mathbf{x}}(T^{s}\omega_{r}) = q^{s}$, $0 \leq s < k = d$, are less 
				than $q^{d}$), so $q^{x_{r}-1} = \nu_{\mathbf{x}}(\omega_{r-1}) = \nu_{\mathbf{x}}(\lambda_{k+1}^{*}) > q^{d}$, in conflict with $x_{r-1} < k = d$. Hence this case cannot occur and \eqref{Eq.Characterisation-of-equality-of-absolute-value-of-lambdaks} holds for 
				all $k \leq d$. That is, $\mathcal{W}_{r}'(k) = \mathcal{W}(k)$ for $k \leq d$, and thus (i).
		\end{enumerate}\stepcounter{enumi}%
		\item \begin{enumerate}[label=(\alph*)]
			\item Consider the commutative diagram
			\[
				\begin{tikzcd}
					{}_{d}V \ar[r, hook] \ar[d, "\varepsilon_{d}"']	& {}_{d+1}V \ar[d, "\varepsilon_{d+1}"] \\
					{}_{d}V_{\mathbf{x}} \ar[r, hook, "i_{d}"']		& {}_{d+1}V_{\mathbf{x}},
				\end{tikzcd}
			\]
			where the lower inclusion $i_{d}$ is ${}_{d}e_{s,i} \mapsto {}_{d+1}e_{s,i}$. Let $_{d}\lvert \, \cdot \, \rvert$ resp. $_{d+1}\lvert \, \cdot \, \rvert$ be the absolute values on $_{d}V_{\mathbf{x}}$ 
			resp. $_{d+1}V_{\mathbf{x}}$, and $(_{d}\lambda_{k})$, $(_{d+1}\lambda_{k})$ the respective characteristic sequences. For $k \leq d$, $\varepsilon_{d}(\lambda_{k}^{*}) = {}_{d}\lambda_{k}$ 
			and $i_{d} \circ \varepsilon_{d}(\lambda_{k}^{*}) = \varepsilon_{d+1}(\lambda_{k}^{*}) = {}_{d+1}\lambda_{k}$. Hence the first $d$ terms of the characteristic sequences on $_{d}V_{\mathbf{x}}$ 
			and $_{d+1}V_{\mathbf{x}}$ agree, but $_{d}\lvert _{d}\lambda_{k} \rvert = q {}_{d+1} \lvert _{d+1} \lambda_{k} \rvert$.
			\item Let $W$ be the subspace of $_{d}V_\mathbf{x}$ generated by $\{{}_{d}\lambda_{k} \mid k \leq d \}$ and similarly 
			$W' \defeq \sum_{1 \leq k \leq d} \mathds{F}_{d+1} \lambda_{k} \subset {}_{d+1}V_{\mathbf{x}}$. Then $i_{d} \colon W \overset{\cong}{\to} W'$ and
			\begin{equation}
				_{d+1} \lvert i_{d}(\lambda) \rvert = q^{-1} {}_{d} \lvert \lambda \rvert \quad \text{for } \lambda \in W.
			\end{equation} \stepcounter{subsubsection}%
			Together with \eqref{Eq.Spectral-norm-of-coefficient-forms} it implies:
			\subsubsection{} \stepcounter{equation}%
			If $a$ of degree $d$ is replaced with $a'$ of degree $d+1$, then $\lVert {}_{a}\ell_{k} \rVert_{\mathbf{x}}$ grows by a factor $q^{(q^{k})}$, provided that $k \leq d$.
			Hence $\log_{q} \lVert {}_{a^{\prime}} \ell_{k} \rVert_{\mathbf{x}} = \log_{q} \lVert {}_{a}\ell_{k} \rVert + q^{k}$, thus $P({}_{a^{\prime}} \ell_{k}) = P({}_{a}\ell_{k})$. 
			
			We conclude that 
			\subsubsection{} \label{Subsub.Independence-of-van-der-Put-transforms}\stepcounter{equation}%
			$P({}_{a}\ell_{k})$ is independent of $a$ as long as $d = \deg a \geq k$.
			\item It is known that after a suitable scaling:
			\[
				{}_a \tilde{\ell}_{k} \defeq {}_{a}\ell_{k}/{}_{a}\ell_{k}(A), \qquad \tilde{\alpha}_{k} \defeq \alpha_{k}/\alpha_{k}(A),
			\]
			we have
			\begin{equation} \label{Eq-Lim-coefficient-forms}
				\lim_{\deg a \to \infty} {}_{a} \tilde{\ell}_{k} = \tilde{\alpha}_{k},
			\end{equation}
			where the limit is locally uniformly (\cite{Gekeler-ta-1} Theorem 4.13). Here locally uniform convergence means uniform convergence on the parts of an admissible covering of $\Omega$. Note
			that scaling $f \leadsto \tilde{f}$ doesn't change $P(f)$. 
			
			Now (iii) is a formal consequence of \ref{Subsub.Independence-of-van-der-Put-transforms} and \eqref{Eq-Lim-coefficient-forms}: Given a vertex $\mathbf{n}$ of $\mathcal{BT}$, let $\st(\mathbf{u})$ be the
			star of $\mathbf{n}$ (the full subcomplex of $\mathcal{BT}$ with vertices $\mathbf{n}$ and the neighbors of $\mathbf{n}$). Let $d = \deg a$ be large enough so that ${}_{a} \tilde{\ell}_{k}$ is close to 
			$\tilde{\alpha}_{k}$ on $\lambda^{-1}(\st(\mathbf{n}))$. Then 
			\[
				P({}_{a}\ell_{k}) = P( {}_{a} \tilde{\ell}_{k}) =_{\mathbf{n}} P(\tilde{\alpha}_{k}) = P(\alpha_{k}),
			\]
			where \enquote{$=_{\mathbf{n}}$} means that both sides agree on arrows belonging to $\st(\mathbf{n})$. Since this holds for all $\mathbf{n} \in \mathcal{BT}(\mathds{Z})$, and taking \ref{Subsub.Independence-of-van-der-Put-transforms} into
			account, (iii) results.
		\end{enumerate}
	\end{enumerate}
\end{proof}

The behavior of ${}_{a}\ell_{k}$ and $\mathcal{W}({}_{a}\ell_{k})$ for $k > d$ seems to be more complex, as some experiments show. It certainly deserves a deeper investigation. Here we restrict to 
present the following simple example.

\begin{Example}
	Let $a \in A$ have degree $d \geq 1$ and $k = rd-1$. Then $\mathcal{W}({}_{a}\ell_{k}) = \mathcal{W}(g_{r-1}) = \mathcal{W}_{1}$.
\end{Example}

\begin{proof}
	We must show that for $\mathbf{x} \in \mathcal{W}(\mathds{Q})$ and $\boldsymbol{\omega} \in \mathbf{F}_{\mathbf{x}}$, the lattice ${}_{a}\phi^{\boldsymbol{\omega}}$ is $k$-inseparable if and only if 
	$x_{1} = x_{2}$, where $x_{i} = \log \omega_{i}$ $(i=1,2)$. But this is obvious from \eqref{Eq.Estimate-absolute-value-of-esi-over-esi+1}  and \eqref{Eq.Absolute-values-of-es+1r-and-esr}, as $\lambda_{rd} = e_{d-1,1}$ and $\lambda_{k} = \lambda_{rd-1}$ is either $e_{d-2,1}$ (which is strictly 
	smaller than $e_{d-1,1}$) or $e_{d-1,2}$.
\end{proof}

We conclude this section with some hints how to calculate the spectral norms of ${}_{a}\ell_{k}$.

\subsection{} Let $\mathbf{x} \in \mathcal{W}(\mathds{Q})$, $\boldsymbol{\omega} \in \mathbf{F}_{\mathbf{x}}$, and $(\lambda_{1}, \dots, \lambda_{rd})$ be the characteristic sequence. The numbers
$m_{s} \defeq \log \lambda_{s}$ are calculated in \ref{Sub.Product-representation-of-esi} . Let $(W_{s})_{1 \leq s \leq rd}$ be the associated filtration on $_{a}\phi^{\boldsymbol{\omega}}$, $W_{s} \defeq \sum_{1 \leq t \leq s} \mathds{F}\lambda_{t}$.
Each element $\lambda$ of $W_{s} \setminus W_{s-1}$ satisfies
\begin{equation}
	\log \lambda = m_{s},
\end{equation}
and Corollary \ref{Corollary.Spectral-norm-of-coefficient-forms}  implies that
\begin{equation}
	\log_{q} \lVert {}_{a}\ell_{k} \rVert_{\mathbf{x}} = d - \sideset{}{'}\sum_{\lambda \in W_{k}} \log \lambda.
\end{equation}
In the most simple case, where $\mathbf{x}$ is the origin $\mathbf{0}$, this yields the following.

\begin{Proposition} \label{Proposition.Spectral-norm-of-kth-coefficient-of-operator-polynomial}
	Let $a \in A$ have degree $d \geq 1$ and ${}_{a}\ell_{k}$ be the $k$-th coefficient of the operator polynomial
	\[
		\phi_{a}^{\boldsymbol{\omega}}(X) = \sum_{0 \leq k \leq rd} {}_{a}\ell_{k}(\boldsymbol{\omega}) X^{q^{k}}.
	\]
	Write $k = k_{0} + sr$ with $0 \leq k_{0} < r$, $0 \leq s \leq d$. Then the spectral norm $\lVert {}_{a}\ell_{k} \rVert_{\mathbf{0}}$ on $\mathbf{F}_{\mathbf{0}}$ is given by 
	\[
		\log_{q} \lVert {}_{a}\ell_{k} \rVert_{\mathbf{0}} = (d-s)q^{k} + q^{r}(q^{rs} - 1)/(q^{r}-1).
	\]
\end{Proposition}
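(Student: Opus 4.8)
The plan is to feed the characteristic sequence at the origin into the product formula of Corollary~\ref{Corollary.Spectral-norm-of-coefficient-forms}. Since $\lvert a\rvert=q^{d}$, taking logarithms in \eqref{Eq.Spectral-norm-of-coefficient-forms} gives the displayed identity preceding the statement, $\log_{q}\lVert{}_{a}\ell_{k}\rVert_{\mathbf{0}} = d - \Sigma$ with $\Sigma\defeq\sideset{}{'}\sum_{\lambda\in W_{k}}\log\lambda$. So the whole problem reduces to the combinatorics of the $\log$-values occurring in $W_{k}$ at the vertex $\mathbf{0}$.

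First I would pin down the characteristic sequence of ${}_{d}V_{\mathbf{0}}$. At $\mathbf{x}=\mathbf{0}$ all $x_{i}=\log\omega_{i}=0$, so the product $\sideset{}{'}\prod$ in \eqref{Eq.Absolute-value-of-esi} is empty (its constraint forces $x_{i}-d<0$), and by \eqref{Eq.log-e0i-and-esi} every basis vector satisfies $\log e_{s,i}=s-d$ independently of $i$, in agreement with \eqref{Eq.Absolute-values-of-es+1r-and-esr}. Hence the characteristic sequence splits into $d$ cycles of length $r$: the $s$-th cycle ($0\le s\le d-1$) consists of $e_{s,1},\dots,e_{s,r}$, all of $\log$-value $s-d$, the norm increasing strictly from one cycle to the next. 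Writing $k=k_{0}+sr$ with $0\le k_{0}<r$, the space $W_{k}$ is thus spanned by the $s$ complete cycles $0,\dots,s-1$ together with the first $k_{0}$ vectors of cycle $s$.

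Next I would evaluate $\Sigma$ by grouping $W_{k}\smallsetminus\{0\}$ by $\log$-value. Orthogonality of the basis, namely the identity $\lvert\sum c_{s,i}\,{}_{d}e_{s,i}\rvert=\sup\{\lvert e_{s,i}\rvert\mid c_{s,i}\neq 0\}$ recorded in \ref{Sub.Suitably-ordered-F-SMB}, guarantees that $\log\lambda$ equals the top level appearing in the expansion of $\lambda$, so no cancellation can depress a value. Let $D_{s'}$ denote the dimension of the span of all characteristic vectors of level $\le s'$; then $D_{s'}=(s'+1)r$ for $s'<s$ and $D_{s}=sr+k_{0}=k$ on the partial top cycle. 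The number of $\lambda\in W_{k}$ of level exactly $s'$ is $q^{D_{s'}}-q^{D_{s'-1}}$ (with $D_{-1}=0$), so
\[
	\Sigma=\sum_{s'}(s'-d)\bigl(q^{D_{s'}}-q^{D_{s'-1}}\bigr).
\]
Abel summation converts this into $\Sigma=(S-d)(q^{k}-1)-\sum_{s'=0}^{S-1}(q^{(s'+1)r}-1)$, where $S$ is the top level present, and the geometric sum $\sum_{t=1}^{S}q^{rt}=q^{r}(q^{rS}-1)/(q^{r}-1)$ yields a closed form for $d-\Sigma$.

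The one genuine subtlety, and the step I expect to cost the most care, is the boundary between the cases $k_{0}>0$ and $k_{0}=0$, i.e.\ whether the top cycle is partial or the sequence ends exactly at a cycle boundary. When $k_{0}>0$ the top level is $S=s$ and the algebra collapses at once to $(d-s)q^{k}+q^{r}(q^{rs}-1)/(q^{r}-1)$. When $k_{0}=0$ one has $k=sr$ and $S=s-1$, so the raw Abel expression carries $q^{r(s-1)}$ rather than $q^{rs}$ and gives $(d-s+1)q^{k}+q^{r}(q^{r(s-1)}-1)/(q^{r}-1)$; I would then check directly that the discrepancy is absorbed exactly by the relation $q^{k}=q^{rs}$, reconciling the two expressions. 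Everything else is routine ultrametric bookkeeping together with a single geometric-series evaluation.
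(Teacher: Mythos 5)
Your proposal is correct and follows exactly the paper's route: the paper likewise identifies the characteristic sequence at $\mathbf{0}$ as the $e_{s,i}$ with $\log e_{s,i}=s-d$, arranged in $d$ cycles of length $r$, and then invokes Corollary~\ref{Corollary.Spectral-norm-of-coefficient-forms} via $\log_{q}\lVert{}_{a}\ell_{k}\rVert_{\mathbf{0}}=d-\sideset{}{'}\sum_{\lambda\in W_{k}}\log\lambda$, declaring the rest ``an elementary calculation left to the reader.'' Your level-counting, Abel summation, and the reconciliation of the $k_{0}=0$ and $k_{0}>0$ cases are precisely that calculation, carried out correctly.
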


\begin{proof}
	With the terminology of \ref{Sub.Product-representation-of-esi} , the characteristic sequence of $_{a}\phi^{\boldsymbol{\omega}}$ is 
	\[
		e_{0,r}, e_{0,r-1}, \dots, e_{0,1}, e_{1,r} \dots, e_{1,1}, \dots, e_{d-1,r}, \dots, e_{d-1,1}
	\] 
	with $\log e_{s,i} = s-d$. The result now follows from an elementary calculation left to the reader.
\end{proof}

\begin{Corollary}
	For $1 \leq k \leq rd$ the origin $\mathbf{0}$ belongs to $\mathcal{W}_{d}'(k) = \lambda(\mathbf{F}({}_{a}\ell_{k}))$ if and only if $k \not\equiv 0 \pmod{r}$.
\end{Corollary}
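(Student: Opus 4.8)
The plan is to read the answer off the characteristic sequence of $_a\phi^{\boldsymbol{\omega}}$ at the origin, which was already determined in the proof of Proposition \ref{Proposition.Spectral-norm-of-kth-coefficient-of-operator-polynomial}. There the sequence is
\[
	e_{0,r}, e_{0,r-1}, \dots, e_{0,1}, e_{1,r}, \dots, e_{1,1}, \dots, e_{d-1,r}, \dots, e_{d-1,1},
\]
with $\log e_{s,i} = s-d$ \emph{independent of $i$}. First I would record that this exhibits the sequence as $d$ consecutive cycles of length $r$: the $s$-th cycle ($0 \leq s \leq d-1$) occupies the indices $sr+1, \dots, (s+1)r$ and consists of the $r$ elements $e_{s,r}, \dots, e_{s,1}$, all of common value $\log = s-d$, while the values jump strictly upward by $1$ from each cycle to the next.

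Next I would invoke the inseparability criterion of \ref{Sub.Suitably-ordered-F-SMB} together with the definition of $\mathcal{W}_{d}'(k)$: by definition $\mathbf{0} \in \mathcal{W}_{d}'(k)$ iff $_{d}V_{\mathbf{0}}$ is $k$-inseparable, iff $\lvert \lambda_{k} \rvert = \lvert \lambda_{k+1} \rvert$ for the characteristic sequence. Since the absolute value is constant within each length-$r$ cycle and strictly increasing across cycles, this is exactly the situation governed by Proposition \ref{Proposition.Characterisation-of-inseparable-points-of-WIZ}: the equality holds precisely when $\lambda_{k}$ and $\lambda_{k+1}$ lie in the same cycle, i.e. when $k$ is \emph{not} the final index $(s+1)r$ of any cycle. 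That is, $\mathbf{0}$ is $k$-inseparable iff $k \not\equiv 0 \pmod{r}$.

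Finally I would dispose of the boundary case $k = rd$. Here $rd \equiv 0 \pmod{r}$, and indeed $\lambda_{rd} = e_{d-1,1}$ is the last element of the basis, so there is no $\lambda_{rd+1}$ to match; hence $\mathbf{0} \notin \mathcal{W}_{d}'(rd)$, consistent with the claimed equivalence (and with the earlier remark that $_{a}\ell_{rd}$ has no zeroes on $\Omega$ at all). The argument presents no genuine obstacle, as everything reduces to the cycle structure already established at the origin; the only point requiring a moment's care is this endpoint $k = rd$, where one must check that both \enquote{$k$ is the last index of a cycle} and \enquote{$\lambda_{k+1}$ does not exist} correctly yield non-inseparability, so that the single congruence condition $k \not\equiv 0 \pmod{r}$ governs the entire range $1 \leq k \leq rd$.
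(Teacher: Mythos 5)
Your proposal is correct and follows exactly the paper's route: the paper's proof is precisely the remark that the corollary "follows from the description of the characteristic sequence in the proof of Proposition \ref{Proposition.Spectral-norm-of-kth-coefficient-of-operator-polynomial}," and you have simply carried out the elementary verification left to the reader there (the sequence $e_{0,r},\dots,e_{0,1},\dots,e_{d-1,r},\dots,e_{d-1,1}$ with $\log e_{s,i}=s-d$ splits into $d$ cycles of length $r$, constant within cycles and strictly increasing across them, so $k$-inseparability of ${}_{d}V_{\mathbf{0}}$ holds iff $k\not\equiv 0 \pmod{r}$). Your explicit treatment of the endpoint $k=rd$, which the paper handles by tacitly assuming $k<rd$ since ${}_{a}\ell_{rd}$ has no zeroes on $\Omega$, is a welcome bit of added care and is consistent with the claimed equivalence.
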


\begin{proof}
	This follows from the description of the characteristic sequence in the proof of \ref{Proposition.Spectral-norm-of-kth-coefficient-of-operator-polynomial}.
\end{proof}

\section{A case study: The para-Eisenstein series $\alpha_{2}$ in rank $r=3$}

In this section the rank $r$ always equals 3. We work out in detail the behavior of the $p$-Eisenstein series $\alpha_{2}$ above points $\mathbf{x} \in \mathcal{W}(\mathds{Z}) = \mathcal{W}^{3}(\mathds{Z})$.

\subsection{} Consider $\mathbf{x} \in \mathcal{W}(\mathds{Q})$, $\mathbf{x} = (x_{1}, x_{2}, x_{3}) = (x_{1}, x_{2}, 0)$ with $x_{1} \geq x_{2} \geq 0$ and $\boldsymbol{\omega} \in \mathbf{F}_{\mathbf{x}}$. If
$x_{2} > 1$ the characteristic sequence of $\Lambda_{\boldsymbol{\omega}}$ is $\lambda_{1} = 1$, $\lambda_{2} = T$ and $\lvert \lambda_{3} \rvert > q$. According to our general recipe (see \ref{Remark.Discussion-and-explanation-for-the-convenience-of-the-reader} and
\ref{Procedure.To-determine-spectral-norm-of-alpha-k}):
\begin{equation}
	\alpha_{2}(\boldsymbol{\omega}) \equiv s_{q^{2}-1} \{ \lambda^{-1} \mid 0 \neq \lambda \in \mathds{F} + \mathds{F}T \} \equiv \alpha_{2}(A).
\end{equation} \stepcounter{subsubsection}%
Here \enquote{$\equiv$} means (see proof of Theorem \ref{Theorem.Characterisation-of-elements-of-BTIQ}), part (d)) equality of the leading terms. Hence $\lVert \alpha_{2} \rVert_{\mathbf{x}}$ is constant on $\{ \mathbf{x} \mid x_{2} > 1 \}$ and
by continuity, on $\{ \mathbf{x} \mid x_{2} \geq 1 \} \eqdef \mathcal{W}_{(x_{2} \geq 1)}$ with value
\begin{equation} \label{Eq.Log-of-spectral-norm-of-alpha-2}
	\log_{q} \lVert \alpha_{2} \rVert_{\mathbf{x}} = -(q^{2} - q).
\end{equation} \stepcounter{subsubsection}%
A fortiori, the van der Put transform $P(\alpha_{2})$ is identically zero on $W_{(x_{2} \geq 1)}$.

Let now $\mathbf{x}$ be $(x_{1}, 1,0)$ with $x_{1} \geq 1$. Then $\lambda_{1} = 1$, $\lambda_{2} = T$, $\lambda_{3} = \omega_{2}$ and 
\[
	\lambda_{4} = \begin{cases} \omega_{1}, ~\lambda_{5} = T^{2},	&\text{if } x_{1} = 1, \\ T^{2},	&\text{if } x_{1} > 1. \end{cases}
\]
Hence
\begin{equation}
	\alpha_{2}(\boldsymbol{\omega}) \equiv s_{q^{2}-1} \{ \lambda^{-1} \mid 0 \neq \lambda \in W \},
\end{equation} \stepcounter{subsubsection}%
with
\[
	W = \begin{cases} \mathds{F} + \mathds{F}T + \mathds{F}\omega_{2} + \mathds{F}\omega_{1},	&\text{if } x_{1} = 1, \\ \mathds{F} + \mathds{F}T + \mathds{F}\omega_{2},		&\text{if } x_{1} > 1. \end{cases}
\]
Finally, consider $\mathbf{x} = (x_{1}, 0,0)$ with $x_{1} \geq 0$. Then $\lambda_{1} = 1$, $\lambda_{2} = \omega_{2}$, 
\[
	\lambda_{3} = \begin{cases} \omega_{1}, ~\lambda_{4} = T,	 &\text{if } x_{1} = 0, \\ T,	&\text{if } x_{1} > 0. \end{cases}
\]
Hence
\begin{equation}
	\alpha_{2}(\boldsymbol{\omega}) \equiv s_{q^{2} - 1} \{ \lambda^{-1} \mid 0 \neq \lambda \in W \},
\end{equation} \stepcounter{subsubsection}%
where now
\[
	W = \begin{cases} \mathds{F} + \mathds{F}\omega_{2} + \mathds{F}\omega_{1},	&\text{if } x_{1} = 0, \\ \mathds{F} + \mathds{F}\omega_{2},	&\text{if } x_{1} > 0 \end{cases}
\]
and $\log_{q} \lVert \alpha_{2} \rVert_{\mathbf{x}} = 0$ in both cases by \ref{Procedure.To-determine-spectral-norm-of-alpha-k}.

\begin{Table} \label{Table.Weyl-chamber}
	The Weyl chamber $\mathcal{W} = \mathcal{W}^{3}$. Vertices $\mathbf{n} = (n_{1}, n_{2}, 0)$ are designated by $(n_{1}, n_{2})$.
	\begin{center}
		\begin{tikzpicture}[scale=2]
			%Äußeres gleichseitiges Dreieck
	    	\draw (6,0) -- (0,0) -- (1.25, {1.25*sqrt(3)});
	    	%Innere Dreiecke
	    	\draw (0.5, {sqrt(3)/2}) -- (1,0) -- (2, {sqrt(3)}) -- (3,0) -- (4, {sqrt(3)});
	    	\draw (1, {sqrt(3)}) -- (2,0) -- (3, {sqrt(3)}) -- (4,0);
	  		\draw (1, {sqrt(3)}) -- (6, {sqrt(3)});
	    	% H
	    	\draw[ultra thick] (0,0) -- (0.5,{sqrt(3)/2}) -- (6, {sqrt(3)/2}) node[below, pos=0.9] {$\mathcal{H}$};
	    	%% Points
	    	\node[below] (0) at (0,0) {$\mathbf{0}$};
	    	\node[below] (10) at (1,0) {$(1,0)$};
	    	\node[below] (20) at (2,0) {$(2,0)$};
	    	\node[below] (q) at (4,0) {$\mathbf{q} = (n,0), n> 0$};
	    	\node[left] (p) at (0.5, {sqrt(3)/2}) {$(1,1) = \mathbf{p}$};
	    	\node[left] (22) at (1,{sqrt(3)}) {$(2,2)$};
	    	\node[above] (32) at (2,{sqrt(3)}) {$(3,2)$};
	    	\node[above] (21) at (1.5, {sqrt(3)/2}) {$(2,1)$};
	    	\node[above] (31) at (2.5, {sqrt(3)/2}) {$(3,1)$};
	    	\node[above] (s) at (4,{sqrt(3)}) {$\mathbf{s} = (n_{1}, n_{2}), n_{1} \geq n_{2} \geq 1$};
	    	\node[above] (r) at (4,{sqrt(3)/2}) {$\mathbf{r} = (n,1), n>1$};
	\end{tikzpicture}
	\end{center}
	By (\ref{Theorem.Characterisation-of-certain-subsets-of-BTIQ}), $\mathcal{W}(2) = \lambda(\mathbf{F}(\alpha_{2}))$ is the union of the half-line $\mathcal{H}$ and the edge $(\mathbf{0}, \mathbf{p})$. We conclude from the above that 
	\begin{itemize}
		\item $P(\alpha_{2})$ vanishes identically on the domain above the horizontal half-line $\mathcal{H} = ( (1,1), (2,1), (3,1), \dots )$ and on $\mathcal{H}$;
		\item $P(\alpha_{2})$ vanishes on arrows that belong to $\mathcal{W}_{2} = ( (0,0), (1,0), (2,0), \dots )$;
		\item $P(\alpha_{2})(e) = -(q^{2}-q)$ (resp. $q^{2}-q$) for arrows $e$ that connect $\mathcal{W}_{2}$ to $\mathcal{H}$ (resp. $\mathcal{H}$ to $\mathcal{W}_{2}$).
	\end{itemize}
\end{Table}

We also note that
\subsubsection{} arrows (= oriented 1-simplices) of $\mathcal{W}$ are of type 1 (as defined in \eqref{Eq.Definition-van-der-Put-transform}), if horizontal and oriented east to west or sloped and oriented southwest to northeast or northwest to southeast.

\subsection{} From now on we assume that $\mathbf{n} = (n_{1},n_{2}) = (n_{1}, n_{2}, 0) \in \mathcal{W}(\mathds{Z})$. According to the cases 
\begin{enumerate}[label=(\alph*)]
	\item $\mathbf{n} = \mathbf{0}$,
	\item $\mathbf{n} = (n,0)$, $n > 0$,
	\item $\mathbf{n} = (n,n,0)$, $n > 0$,
	\item $(n_{1}, n_{2})$, $n_{1} > n_{2} > 0$,
\end{enumerate}
the fixed group $\Gamma_{\mathbf{n}}$ in $\Gamma = \GL(3, A)$ is
\subsubsection{}\label{Subsub.Fixed-group-Gamma-n} \stepcounter{equation}%
\begin{enumerate}[label=(\alph*)]
	\item $\GL(3, \mathds{F}$);
	\item $\left\{ \begin{tikzpicture}[scale=0.4, baseline={([yshift=-\the\fontdimen22\textfont2]current bounding box.center)}]
	\draw (0,0) rectangle (3,3);
	\draw (1,0) -- (1,3);
	\draw (0,2) -- (3,2);
	\draw (0,1) -- (1,1);
	\draw (2,3) -- (2,2);
	
	\node (1) at (0.5,2.5) {$*$};
	\node (2) at (1.5, 2.5) {$b$};
	\node (3) at (2.5, 2.5) {$c$};
	\node (4) at (0.5, 1.5) {$0$};
	\node (5) at (0.5, 0.5) {$0$};
	\node (6) at (2,1) {$*$};
\end{tikzpicture} \right\}$, where the $*$ stands for an element of $\mathds{F}^{*}$ or of $\GL(2,\mathds{F})$, respectively, and $b,c \in A$ such that $\deg b, \deg c \leq n$;
	\item $\left\{ \begin{tikzpicture}[scale=0.4, baseline={([yshift=-\the\fontdimen22\textfont2]current bounding box.center)}]
	\draw (0,0) rectangle (3,3);
	\draw (1,0) -- (1,1);
	\draw (2,0) -- (2,3);
	\draw (0,1) -- (3,1);
	\draw (2,2) -- (3,2);
	
	\node(1) at (1,2) {$*$};
	\node (2) at (2.5,2.5) {$c$};
	\node (3) at (2.5,1.5) {$f$};
	\node (4) at (2.5,0.5) {$*$};
	\node (5) at (0.5,0.5) {$0$};
	\node (6) at (1.5, 0.5) {$0$};
\end{tikzpicture} \right\}$, $c,f \in A$ with $\deg c, \deg f \leq n$;
	\item $\left\{ \begin{tikzpicture}[scale=0.4, baseline={([yshift=-\the\fontdimen22\textfont2]current bounding box.center)}]
	\draw (0,0) rectangle (3,3);
	\draw (0,2) -- (3,2);
	\draw (1,0) -- (1,3);
	\draw (0,1) -- (3,1);
	\draw (2,0) -- (2,3);
	
	\node (1) at (0.5, 2.5) {$*$};
	\node (2) at (1.5, 2.5) {$b$};
	\node (3) at (2.5, 2.5) {$c$};
	\node (4) at (0.5, 1.5) {$0$};
	\node (5) at (1.5, 1.5) {$*$};
	\node (6) at (2.5,1.5) {$f$};
	\node (7) at (0.5, 0.5) {$0$};
	\node (8) at (1.5, 0.5) {$0$};
	\node (9) at (2.5, 0.5) {$*$};
\end{tikzpicture} \right\}$, $b,c,f \in A$, $\deg b \leq n_{1} - n_{2}$, $\deg c \leq n_{1} - n_{3} = n_{1}$, $\deg f \leq n_{2} - n_{3} = n_{2}$.
\end{enumerate}
( $\GL(3, K_{\infty})$ and its subgroup $\Gamma$ acts as a matrix group from the right on $V = K_{\infty}^{3} = \{ \text{row vectors} \}$ and therefore on the set of classes $[L]$ of $O_{\infty}$-lattices $L$ in $V$.
This induces the familiar left action of $\GL(3, K_{\infty})$ on $\Omega = \Omega^{3}$, see \cite{Gekeler-ta-2} Section (1.3), (1.5) for more details.)

\subsubsection{} \stepcounter{equation}%
Let $\overline{(\,\cdot \,)}$ be the map from $\Gamma_{\mathbf{n}}$ to $\GL(3, \mathds{F})$ which maps each entry to its leading coefficient and 
$\bar{\Gamma}_{\mathbf{n}} \hookrightarrow \GL(3,\mathds{F})$ its image. Hence $\bar{\Gamma}_{\mathbf{n}} = \GL(3, \mathds{F})$ in case (a), the maximal parabolic subgroups in cases (b), (c), and the standard 
Borel subgroup in case (d). Then $\bar{\Gamma}_{\mathbf{n}}$ acts on the star $\st(\mathbf{n})$ of $\mathbf{n}$ in $\mathcal{BT}^{3}$, that is, on the vertices and simplices contiguous with $\mathbf{n}$.
Let $L_{\mathbf{n}} = (\pi^{n_{1}} O_{\infty}, \pi^{n_{2}} O_{\infty}, O_{\infty}) \subset V$ be the $O_{\infty}$-lattice that corresponds to $\mathbf{n}$. Then
\begin{equation} \label{Eq.Isomorphism-for-projective-spaces}
	\mathds{P}(L_{\mathbf{n}}/ \pi L_{\mathbf{n}} ) \overset{\cong}{\longrightarrow} \mathds{P}^{2}(\mathds{F})
\end{equation}
corresponds naturally to the set $\mathbf{A}_{\mathbf{n},1}$ of arrows of type 1 emanating from $\mathbf{n}$. We will determine $P(\alpha_{2})(e)$ for all $e \in \mathbf{A}_{\mathbf{n},1}$ (which requires
determining the orbits of $\bar{\Gamma}_{\mathbf{n}}$ on $\mathds{P}(L_{\mathbf{n}}/ \pi L_{\mathbf{n}} )$ and some factors of automorphy) and the local inner degree $N_{\mathbf{n}}(\alpha_{2})$
defined in \ref{Sub.Logarithm-of-invertible-holomorphic-function}.

\subsection{} Given a non-zero vector $\mathbf{y} \in V$, the \textbf{shift} of $\mathbf{n}$ toward $\mathbf{y}$ is the vertex $w$ of $\mathcal{BT}^{3}$ corresponding to 
$(L_{\mathbf{n}} \cap K_{\infty} \mathbf{y}) + \pi L_{\mathbf{n}}$. We say that the arrow $e = (\mathbf{n}, w)$ of $\mathcal{BT}^{3}$ \textbf{points} to $\mathbf{y}$. Note that $e$ is of type 1.
\subsection{} The factor of automorphy of $\gamma \in \Gamma$ at $\boldsymbol{\omega}$ is 
\begin{equation} \label{Eq.Factor-of-automorphy-of-gamma}
	\aut(\gamma, \boldsymbol{\omega}) = \gamma_{3,1} \omega_{1} + \gamma_{3,2} \omega_{2} + \gamma_{3,3} \omega_{r}.
\end{equation}
Let $\mathbf{y}$ be the row vector $(\gamma_{3,1}, \gamma_{3,2}, \gamma_{3,3})$ of $\gamma$ and $\mathbf{z} = (0,0,1)$. In the terminology of \cite{Gekeler-ta-3} Sect. 2, $\aut(\gamma, \cdot)$ is the function
$\ell_{H,H'} = \ell_{H}/\ell_{H'}$ on $\Omega$, where $H$ and $H'$ are the hyperplanes in $V$ orthogonal with $\mathbf{y}$ and $\mathbf{z}$, respectively, under the bilinear form 
$\langle \mathbf{v}, \mathbf{v}' \rangle = \sum_{1 \leq i \leq 3} v_{i} v_{i}'$. The van der Put transform $P(\aut(\gamma, \cdot))$ is given on $\mathbf{A}_{\mathbf{n},1}$ by 
\begin{equation} \label{Eq.Van-der-Put-Transform-of-factor-of-automorphy}
	P( \aut(\gamma, \cdot))(e) = \begin{cases} {-}1,	&\text{if $e$ points to $\mathbf{y}$, not to $\mathbf{z}$}, \\ {+}1,	&\text{if $e$ points to $\mathbf{z}$, not to $\mathbf{y}$}, \\ \hphantom{+}0,	&\text{otherwise} \end{cases}
\end{equation}
(\cite{Gekeler-ta-3} 2.10). We thus get for a simplicial modular form $f$ of weight $k$, $\gamma \in \Gamma$ and an arrow $e \in \mathbf{A}_{\mathbf{n}, 1}$
\begin{equation} \label{Eq.Van-der-Put-of-simplicial-modular-form}
	P(f)(\gamma e) = kP(\aut(\gamma, \cdot))(e) + P(f)(e).
\end{equation}
We will apply this to $\alpha_{2}$ and $\gamma \in \Gamma_{\mathbf{n}}$.
\subsection{}\label{Sub.Case-distinction-o-p-q-r-s} We shall determine the relevant local data of $\alpha_{2}$ above $\mathbf{n} = (n_{1}, n_{2}) \in W(\mathds{Z})$, distinguishing the five cases
\begin{enumerate}[label=(\alph*)] \setcounter{enumi}{14}
	\item $\mathbf{n} = \mathbf{0} = (0,0)$,
	\item $\mathbf{n} = \mathbf{p} = (1,1)$,
	\item $\mathbf{n} = \mathbf{q} = (n,0)$, $n>0$,
	\item $\mathbf{n} = \mathbf{r} = (n,1)$, $n>1$,
	\item $\mathbf{n} = \mathbf{s} = (n_{1}, n_{2})$, $n_{1} \geq n_{2} > 1$.
\end{enumerate}
\subsection{} \textbf{Case} (0), $\mathbf{n} = \mathbf{0}$. A look to \ref{Table.Weyl-chamber}, along with the fact that $\mathcal{W}$ is a fundamental domain for $\Gamma$ on $\mathcal{BT}$, shows that all
$e \in \mathbf{A}_{\mathbf{0}, 1}$ are $\bar{\Gamma}_{\mathbf{0}}$-equivalent with $e_{0} = (\mathbf{0}, \mathbf{p})$. Under \eqref{Eq.Isomorphism-for-projective-spaces} , $e_{0}$ corresponds to the point $(0:0:1)$ of $\mathds{P}^{2}(\mathds{F})$
(i.e., $e_{0}$ points to $\mathbf{z}$), whose fixed group in $\bar{\Gamma}_{0} = \GL(3, \mathds{F})$ is the parabolic subgroup 
\[
	\bar{\Gamma}_{\mathbf{0}, e_{0}} = \left\{ \begin{tikzpicture}[scale=0.4, baseline={([yshift=-\the\fontdimen22\textfont2]current bounding box.center)}]
	\draw (0,0) rectangle (3,3);
	\draw (0,1) -- (3,1);
	\draw (2,0) -- (2,3);
	\draw (1,0) -- (1,1);
	
	\node (1) at (1, 2) {$*$};
	\node (2) at (2.5,2.5) {$*$};
	\node (3) at (2.5,1.5) {$*$};
	\node (4) at (0.5,0.5) {$0$};
	\node (5) at (1.5,0.5) {$0$};
	\node (6) at (2.5,0.5) {$*$};
\end{tikzpicture} \right\}.
\] 
Hence for
$\gamma \in \bar{\Gamma}_{\mathbf{0}} \smallsetminus \bar{\Gamma}_{\mathbf{0}, e_{0}}$ the recipe \eqref{Eq.Van-der-Put-Transform-of-factor-of-automorphy} gives $P(\aut(\gamma, \cdot))(e_{0}) = 1$, and by \eqref{Eq.Van-der-Put-of-simplicial-modular-form} ,
\begin{equation}
	P(\alpha_{2})(\gamma e_{0}) = q^{2} -1 + (q-q^{2}) = q-1.
\end{equation}
Therefore,
\begin{equation} \label{Eq.Case-0-sum-of-van-der-Puts}
	\sum_{e \in \mathbf{A}_{\mathbf{0},1}} P(\alpha_{2})(e) = ( [\Gamma_{\mathbf{0}} : \Gamma_{\mathbf{0}, e_{0}}] - 1)(q-1) + q-q^{2} = q^{3} - q^{2}.
\end{equation}
Now $\lVert \alpha_{2} \rVert_{\mathbf{0}} = 1$, so the reduction of $\alpha_{2}$, a rational function on the reduction 
\[
	(\bar{\mathbf{F}}_{\mathbf{0}}) = \mathds{P}^{2}/ \mathds{F} \smallsetminus \bigcup H, 
\]
where $H$ runs through the hyperplanes defined over $\mathds{F}$, see \ref{Sub.Logarithm-of-invertible-holomorphic-function}, is given in the natural coordinates $(\bar{\omega}_{1}: \bar{\omega}_{2} : \bar{\omega}_{3})$ by 
$\bar{\alpha}_{2}(\bar{\boldsymbol{\omega}}) = \alpha_{2}(\Lambda_{\bar{\boldsymbol{\omega}}})$, where 
$\Lambda_{\bar{\boldsymbol{\omega}}} = \mathds{F} \bar{\omega}_{1} + \mathds{F} \bar{\omega}_{2} + \mathds{F}\bar{\omega}_{3}$ is the $\mathds{F}$-lattice spanned by the entries of 
$\bar{\boldsymbol{\omega}}$. In the following formulas, we omit for the moment the bars $(\overline{\vphantom{)}\hphantom{\,\cdot \,}})$ on the $\omega_{i}$. By \eqref{Eq.Coefficients-of-Moore-determinant},
\begin{equation}
	\alpha_{2}(\Lambda_{\bar{\boldsymbol{\omega}}}) = \frac{M^{(2)}(\omega_{1}, \omega_{2}, \omega_{3})}{M(\omega_{1}, \omega_{2}, \omega_{3})^{q}}.
\end{equation}
The Moore determinant in the numerator is 
\[
	\det \begin{pmatrix} \omega_{1} & \omega_{1}^{q} & \omega_{1}^{q^{3}} \\ \omega_{2} & \omega_{2}^{q} & \omega_{2}^{q^{3}} \\ \omega_{3} & \omega_{3}^{q} & \omega_{3}^{q^{3}} \end{pmatrix},
\]
the denominator the $q$-th power of the \enquote{same} determinant, but the $(~)^{q^{3}}$ entries replaced with $(~)^{q^{2}}$. Now the form $M(\omega_{1}, \omega_{2}, \omega_{3})$ on 
$\mathds{P}^{2}/\mathds{F}$ has simple zeroes along the rational hyperplanes $H$ of $\mathds{P}^{2}/\mathds{F}$ and no other zeroes. As is easily seen, $M^{(2)}(\omega_{1}, \omega_{2}, \omega_{3})$ has a
simple zero along $(\omega_{1} = 0)$ and thus, by symmetry, simple zeroes along all the $H$'s. Therefore the local inner degree $N_{\mathbf{0}}(\alpha_{2})$ of $\alpha_{2}$ at $\mathbf{0}$ is
\begin{align}
	N_{\mathbf{0}}(\alpha_{2}) 	&= \deg( M^{(2)}(\boldsymbol{\omega}) ) - \deg( \text{divisor of $M^{(2)}(\boldsymbol{\omega})$ along the boundary}) \\
															&= q^{3} + q + 1 - (q^{2} + q + 1) = q^{3} - q^{2}, \nonumber
\end{align}
in accordance with \eqref{Eq.Case-0-sum-of-van-der-Puts}  and Proposition \ref{Proposition.Relation-local-inner-degree-van-der-Put-transform}. 

For later use, we remark the analogous properties of $\bar{\alpha}_{1}$:
\begin{align} \label{Eq.Properties-of-alpha-1-and-Moore-determinant}
		&\bar{\alpha}_{1}(\bar{\boldsymbol{\omega}}) = \alpha_{1}(\Lambda_{\bar{\boldsymbol{\omega}}}), \qquad \alpha_{1}(\Lambda_{\bar{\boldsymbol{\omega}}}) = \frac{M^{(1)}(\omega_{1}, \omega_{2}, \omega_{3})}{M(\omega_{1}, \omega_{2}, \omega_{3})^{q}}, \\ 
		&\qquad \qquad M^{(1)}(\omega_{1}, \omega_{2}, \omega_{3}) = \det \begin{pmatrix} \omega_{1} & \omega_{1}^{q^{2}} & \omega_{1}^{q^{3}} \\ \omega_{2} & \omega_{2}^{q^{2}} & \omega_{2}^{q^{3}} \\ \omega_{3} & \omega_{3}^{q^{2}} & \omega_{3}^{q^{3}} \end{pmatrix} \nonumber
\end{align}
also has simple zeroes at all the boundary components $H$, so 
\[
	N_{\mathbf{0}}(\alpha_{1}) = q^{3} + q^{2} + 1 - (q^{2} + q + 1) = q^{3} - q.
\]
\subsection{} \textbf{Case} (p) of \ref{Sub.Case-distinction-o-p-q-r-s} , $\mathbf{n} = \mathbf{p} = (1,1)$. By \ref{Subsub.Fixed-group-Gamma-n} , 
\[
	\bar{\Gamma}_{\mathbf{p}} = \left\{ \begin{tikzpicture}[scale=0.4, baseline={([yshift=-\the\fontdimen22\textfont2]current bounding box.center)}]
	\draw (0,0) rectangle (3,3);
	\draw (0,1) -- (3,1);
	\draw (2,0) -- (2,3);
	\draw (1,0) -- (1,1);
	
	\node (1) at (1, 2) {$*$};
	\node (2) at (2.5,2.5) {$*$};
	\node (3) at (2.5,1.5) {$*$};
	\node (4) at (0.5,0.5) {$0$};
	\node (5) at (1.5,0.5) {$0$};
	\node (6) at (2.5,0.5) {$*$};
\end{tikzpicture} \right\}.
\] 
and there are two classes modulo $\bar{\Gamma}_{\mathbf{p}}$ of arrows $e \in \mathbf{A}_{\mathbf{p}, 1}$, represented by $e_{1} = ( \mathbf{p}, (2,2))$ and $e_{2} = (\mathbf{p}, (1,0) )$. Under \eqref{Eq.Isomorphism-for-projective-spaces} , 
$e_{1}$ corresponds to $(0:0:1)$ and $e_{2}$ to $(0:1:0)$. We see that $\bar{\Gamma}_{\mathbf{p}}$ fixes $e_{1}$, while the stabilizer group of $e_{2}$ is 
\[
	\bar{\Gamma}_{\mathbf{p}, e_{2}} = \left\{ \begin{tikzpicture}[scale=0.4, baseline={([yshift=-\the\fontdimen22\textfont2]current bounding box.center)}]
	\draw (0,0) rectangle (3,3);
	\draw (0,1) -- (3,1);
	\draw (1,0) -- (1,3);
	\draw (0,2) -- (3,2);
	\draw (2,0) -- (2,3);
	
	\node (1) at (0.5,2.5) {$*$};
	\node (2) at (1.5,2.5) {$*$};
	\node (3) at (2.5,2.5) {$*$};
	\node (4) at (0.5,1.5) {$0$};
	\node (5) at (1.5,1.5) {$*$};
	\node (6) at (2.5,1.5) {$0$};
	\node (7) at (0.5,0.5) {$0$};
	\node (8) at (1.5,0.5) {$0$};
	\node (9) at (2.5,0.5) {$*$};
\end{tikzpicture} \right\},
\] 
with index $(q+1)q$ in $\bar{\Gamma}_{\mathbf{p}}$. Hence the orbit of $e_{2}$ under $\bar{\Gamma}_{\mathbf{p}}$
has length $(q+1)q$, and each $e$ which is $\bar{\Gamma}_{\mathbf{p}}$-equivalent with $e_{2}$ satisfies
\begin{equation}
	P(\alpha_{2})(e) = P(\alpha_{2})(e_{2}) = q^{2} - q,
\end{equation}
since $\aut(\gamma, \boldsymbol{\omega}) = 1$ for $\gamma \in \bar{\Gamma}_{\mathbf{p}}$. We find
\begin{equation}
	\sum_{e \in \mathbf{A}_{\mathbf{p},1}} P(\alpha_{2})(e) = (q^{2} + q)(q^{2}-q) = q^{4} - q^{2}.
\end{equation}
Now let's determine $N_{\mathbf{p}}(\alpha_{2})$ independently of Proposition \ref{Proposition.Relation-local-inner-degree-van-der-Put-transform}. For $\boldsymbol{\omega} \in \mathbf{F}_{\mathbf{p}}$, the characteristic sequence of $\Lambda_{\boldsymbol{\omega}}$ is
$\lambda_{1} = 1$, $\lambda_{2} = T$, $\lambda_{3} = \omega_{2}$, $\lambda_{4} = \omega_{1}$, while $\lvert \lambda_{5} \rvert = \lvert T^{2} \rvert > \lvert \lambda_{4} \rvert$. Therefore
\begin{equation} \label{Eq.Alpha-2-of-omega}
	\alpha_{2}(\boldsymbol{\omega}) \equiv \alpha_{2}(W'),
\end{equation}
where $W'$ is the $\mathds{F}$-space generated by $\{1,T, \omega_{2}, \omega_{1}\}$. (Only basis elements $\lambda_{j}$ with $\lvert \lambda_{j} \rvert \leq \lvert \lambda_{2} \rvert$ count for the leading term!)
In order to find the reduction, we replace $W'$ with $W \defeq \pi W'$, spanned by $\{ \pi, 1, \pi \omega_{2}, \pi \omega_{1}\}$, $\pi = T^{-1}$. Let $\bar{\alpha}_{2}$ be the function $(\overline{T^{q^{2} - q} \alpha_{2}})$
(i.e., $\alpha_{2}$ scaled such that the spectral norm equals 1, and then reduced) on $\bar{\mathbf{F}}_{\mathbf{p}} \overset{\cong}{\to} \mathds{P}^{2}/\mathds{F} \smallsetminus \bigcup \, H$ in its natural
coordinates $\bar{\boldsymbol{\omega}} = ( (\overline{\pi \omega_{1}} : (\overline{\pi \omega_{2}}) : 1)$. Since $\alpha_{4}(W) = (\sideset{}{'}{\textstyle\prod}_{\lambda \in W} \lambda)^{-1}$ as a function
on $\mathbf{F}_{\mathbf{p}}$ has constant absolute value, Lemma \ref{Lemma.Easy-lemma}  (with $d=4$ and $d_{0}=1$) yields that 
\[
	\bar{\alpha}_{2}(\bar{\boldsymbol{\omega}}) = \alpha_{1}(\bar{\boldsymbol{\omega}})^{q} \alpha,
\]
where $\alpha$ is a rational function on $\mathds{P}^{2}/\mathds{F}$ invertible on its subspace $\bar{\mathbf{F}}_{\mathbf{p}}$. Hence the degree of the zero divisor of $\bar{\alpha}_{2}$ on 
$\overline{\mathbf{F}}_{\mathbf{p}}$ is $q$ times the same quantity of $\bar{\alpha}_{1}$ on $\bar{\mathbf{F}}_{\mathbf{0}}$, i.e., 
\begin{equation} \label{Eq.Case-p-Local-inner-degree}
	N_{\mathbf{p}}(\alpha_{2}) = q N_{\mathbf{0}}(\alpha_{1}) = q^{4} - q^{2}
\end{equation}
by \eqref{Eq.Properties-of-alpha-1-and-Moore-determinant} . We have made use of the following easy lemma.

\begin{Lemma} \label{Lemma.Easy-lemma}
	Let $W \subset C_{\infty}$ be a finite $\mathds{F}$-lattice of dimension $d$, $\{ \lambda_{1}, \dots, \lambda_{d} \}$ an $\mathds{F}$-SMB, $W_{0}$ the sublattice generated by 
	$\{\lambda_{1}, \dots, \lambda_{d_{0}}\}$, where $\lvert \lambda_{d_{0}} \rvert < \lvert \lambda_{d_{0}+1} \rvert = \dots = \lvert \lambda_{d} \rvert = 1$. Write $(\bar{.})$ for the reduction map
	from $O_{C_{\infty}}$ to its residue class field $\bar{\mathds{F}}$ and $e_{W}(X) = \sum_{0 \leq j \leq d} \alpha_{j}(W)X^{q^{j}}$ for the exponential function. Let $\bar{W} = \sum_{d_{0} < j \leq d} \mathds{F} \bar{\lambda}_{j}$ be the reduced lattice of dimension $d-d_{0}$ in $\bar{\mathds{F}}$. Then for each $j$, $0 \leq j \leq d$, $\lvert \alpha_{j}(W)/\alpha_{d}(W) \rvert \leq 1$ and for $d_{0} \leq j \leq d$,
	\[
		( \overline{\alpha_{j}(W)/\alpha_{d}(W)}) = ( \alpha_{j - d_{0}}(\bar{W}) / \alpha_{d-d_{0}}(\bar{W}))^{q^{d_{0}}}
	\]
	holds.
\end{Lemma}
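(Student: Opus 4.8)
The plan is to reduce everything to the factorization of the exponential function induced by the sublattice $W_0 \subset W$. Writing $\tau$ for the $q$-Frobenius as in the conventions, composition of exponentials gives the identity $e_W = e_{W'} \cdot e_{W_0}$ in $C_\infty\{\tau\}$, where $W' \defeq e_{W_0}(W)$ is the image lattice, of dimension $d-d_0$. Reading off the coefficient of $\tau^j$ yields the convolution
\[
	\alpha_j(W) = \sum_{l+i=j} \alpha_l(W')\,\alpha_i(W_0)^{q^l},
\]
so that in particular $\alpha_d(W) = \alpha_{d-d_0}(W')\,\alpha_{d_0}(W_0)^{q^{d-d_0}}$. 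The entire argument then consists of tracking absolute values and reductions of the two factor lattices $W_0$ and $W'$.

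First I would analyse $W_0$. All its nonzero elements lie in the open unit disk ($\lvert\lambda_i\rvert<1$ for $i\le d_0$), so by the product formula \eqref{Eq.Exponential-function-associated-with-IF-lattice} together with the Moore-determinant expression \eqref{Eq.Coefficients-of-Moore-determinant} for the top coefficient, one has $\lvert\alpha_{d_0}(W_0)\rvert = \sideset{}{'}\prod_{\mu\in W_0}\lvert\mu\rvert^{-1}>1$, and this top coefficient strictly dominates: $\lvert\alpha_i(W_0)\rvert<\lvert\alpha_{d_0}(W_0)\rvert$ for $i<d_0$ (the standard Newton-polygon behaviour of the exponential of a lattice contained in the unit disk). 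Put $u\defeq\alpha_{d_0}(W_0)$. Next I would treat $W'$: for $i>d_0$ the generator $e_{W_0}(\lambda_i)=\lambda_i\,\sideset{}{'}\prod_{\mu\in W_0}(1-\lambda_i/\mu)$ satisfies $\lvert e_{W_0}(\lambda_i)\rvert=\lvert u\rvert$, and pulling the factor $-\lambda_i/\mu$ out of each term shows $\overline{e_{W_0}(\lambda_i)/u}=\bar\lambda_i^{\,q^{d_0}}$. Hence $W''\defeq u^{-1}W'$ is a lattice with unit generators whose reduction is the Frobenius twist $\overline{W''}=\{\bar w^{\,q^{d_0}}\mid\bar w\in\bar W\}$; using $\alpha_l(\overline{W''})=\alpha_l(\bar W)^{q^{d_0}}$ and the fact that reduction commutes with the exponential construction for unit lattices, I obtain $\alpha_l(W'')\in O_{C_\infty}$ with $\overline{\alpha_l(W'')}=\alpha_l(\bar W)^{q^{d_0}}$ and $\lvert\alpha_{d-d_0}(W'')\rvert=1$. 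Here the $\mathbb{F}$-linear independence of the reductions $\bar\lambda_{d_0+1},\dots,\bar\lambda_d$ comes from the orthogonality \ref{Subsub.Orthogonal-basis-of-lattice} of the SMB for the unit-length part.

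Finally I would substitute $\alpha_l(W')=u^{1-q^l}\alpha_l(W'')$ into the convolution. The powers of $u$ telescope against $\alpha_d(W)=u\,\alpha_{d-d_0}(W'')$, leaving
\[
	\frac{\alpha_j(W)}{\alpha_d(W)} = \sum_{l+i=j}\frac{\alpha_l(W'')}{\alpha_{d-d_0}(W'')}\Bigl(\frac{\alpha_i(W_0)}{u}\Bigr)^{q^l}.
\]
Every factor on the right has absolute value $\le 1$ (for the $W''$-part because $\lvert\alpha_{d-d_0}(W'')\rvert=1$; for the $W_0$-part by the domination established above), which gives the first assertion $\lvert\alpha_j(W)/\alpha_d(W)\rvert\le 1$. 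For $d_0\le j\le d$ all terms with $i<d_0$ reduce to $0$ because $\lvert\alpha_i(W_0)/u\rvert<1$, so only the term $i=d_0$, $l=j-d_0$ survives; there $\alpha_{d_0}(W_0)/u=1$, and the reduction collapses to $\overline{\alpha_{j-d_0}(W'')/\alpha_{d-d_0}(W'')}=(\alpha_{j-d_0}(\bar W)/\alpha_{d-d_0}(\bar W))^{q^{d_0}}$, which is the second assertion.

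The main obstacle is pinning down the appearance of the $q^{d_0}$-power, which is the actual content of the lemma. It is forced entirely by the reduction step for $W'$: the reduced lattice is not $\bar W$ but its $q^{d_0}$-fold Frobenius twist, because pushing $\lambda_i$ through $e_{W_0}$ multiplies it by the $(q^{d_0}-1)$-fold product $\sideset{}{'}\prod_{\mu}(-\lambda_i/\mu)$ and thereby raises its reduction to the $q^{d_0}$-th power. Getting this reduction exactly right—checking that the sign $(-1)^{q^{d_0}-1}$ is $+1$ and that the subunit factors $1-\mu/\lambda_i$ reduce to $1$—is the only delicate point; the remainder is bookkeeping with the convolution formula.
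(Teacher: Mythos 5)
Your argument is correct, but it takes a genuinely different route from the paper's. You factor the exponential through the sublattice, $e_W = e_{W'} \circ e_{W_0}$ with $W' = e_{W_0}(W)$, track coefficients via the convolution in $C_\infty\{\tau\}$, and identify the scaled image lattice $W'' = u^{-1}W'$, $u = \alpha_{d_0}(W_0)$, as a lattice of units whose reduction is the $q^{d_0}$-fold Frobenius twist of $\bar W$; the exponent $q^{d_0}$ enters through the generators, $\overline{e_{W_0}(\lambda_i)/u} = \bar\lambda_i^{\,q^{d_0}}$. The paper never decomposes $e_W$ at all: it passes to the monic polynomial $f_W(X) = \alpha_d(W)^{-1}e_W(X) = \prod_{\lambda \in W}(X - \lambda)$, whose coefficients are integral because every root lies in $O_{C_\infty}$, and reduces this product directly; since the reduction map $W \to \bar W$ is $q^{d_0}$-to-$1$ (kernel $W_0$), one gets $\overline{f_W} = (f_{\bar W})^{q^{d_0}}$, and both assertions drop out by comparing coefficients of $X^{q^j}$. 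There the power $q^{d_0}$ appears as a root multiplicity rather than as a Frobenius twist of generators. The paper's proof is three lines; yours costs more bookkeeping (the Newton-polygon domination $\lvert \alpha_i(W_0) \rvert < \lvert u \rvert$ for $i < d_0$, the telescoping of powers of $u$ through the convolution) but yields strictly more information along the way, namely an explicit description of $e_{W_0}(W)$ up to scaling and of its reduction, which the coefficient comparison hides; this finer statement could be reusable elsewhere. One small remark: the sign worry you flag as the delicate point is vacuous, since in the identity $e_{W_0}(\lambda_i) = \lambda_i^{q^{d_0}} u \prod'_{\mu}(1 - \mu/\lambda_i)$ the sign $(-1)^{q^{d_0}-1}$ is absorbed exactly into $u = \prod'_{\mu}(-1/\mu)$, and in any case $(-1)^{q^{d_0}-1} = 1$ in characteristic $p$.
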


\begin{proof}
	Let 
	\begin{align*}
		f_{W}(X) 				&\defeq \alpha_{d}(W)^{-1} e_{W}(X) = \prod_{\lambda \in W} (X-\lambda) 
	\intertext{and correspondingly}
		f_{\bar{W}}(X) 	&= \alpha_{d-d_{0}}(\bar{W}) e_{\bar{W}}(X) = \prod_{\lambda \in \bar{W}} (X-\lambda).
	\end{align*}
	Now the coefficients $\alpha_{j}(W)/\alpha_{d}(W)$ of $f_{W}$ are $\leq 1$ in absolute value and may be reduced. As $\overline{(\hphantom{\,\cdot\,})} \colon W \to \bar{W}$ is $q^{d_{0}}$ to 1,
	$( \overline{f_{W}}) = (f_{\bar{W}})^{q^{d_{0}}}$ and the assertion results from comparing coefficients.
\end{proof}

\subsection{} \textbf{Case} (q) of \ref{Sub.Case-distinction-o-p-q-r-s} , $\mathbf{n} = \mathbf{q} = (n,0)$ with $n > 0$. According to \ref{Subsub.Fixed-group-Gamma-n} , 
\[
	\bar{\Gamma}_{\mathbf{q}} = \left\{ \begin{tikzpicture}[scale=0.4, baseline={([yshift=-\the\fontdimen22\textfont2]current bounding box.center)}]
	\draw (0,0) rectangle (3,3);
	\draw (1,0) -- (1,3);
	\draw (0,2) -- (3,2);
	\draw (0,1) -- (1,1);
	\draw (2,3) -- (2,2);
	
	\node (1) at (0.5,2.5) {$*$};
	\node (2) at (1.5, 2.5) {$*$};
	\node (3) at (2.5, 2.5) {$*$};
	\node (4) at (0.5, 1.5) {$0$};
	\node (5) at (0.5, 0.5) {$0$};
	\node (6) at (2,1) {$*$};
\end{tikzpicture} \right\}.
\] 
There are two $\bar{\Gamma}_{\mathbf{q}}$-classes of 1-arrows with origin $\mathbf{q}$, represented by $e_{1} = (\mathbf{q}, (n+1,1) )$ and $e_{2} = ( \mathbf{q}, (n-1,0))$. Under \eqref{Eq.Isomorphism-for-projective-spaces} , $e_{1}$ corresponds to 
$(0:0:1)$ and $e_{2}$ to $(1:0:0)$. We see that
\[
	\bar{\Gamma}_{\mathbf{q}, e_{1}} = \left\{ \begin{tikzpicture}[scale=0.4, baseline={([yshift=-\the\fontdimen22\textfont2]current bounding box.center)}]
	\draw (0,0) rectangle (3,3);
	\draw (0,2) -- (3,2);
	\draw (1,0) -- (1,3);
	\draw (0,1) -- (3,1);
	\draw (2,0) -- (2,3);
	
	\node (1) at (0.5, 2.5) {$*$};
	\node (2) at (1.5, 2.5) {$*$};
	\node (3) at (2.5, 2.5) {$*$};
	\node (4) at (0.5, 1.5) {$0$};
	\node (5) at (1.5, 1.5) {$*$};
	\node (6) at (2.5,1.5) {$*$};
	\node (7) at (0.5, 0.5) {$0$};
	\node (8) at (1.5, 0.5) {$0$};
	\node (9) at (2.5, 0.5) {$*$};
\end{tikzpicture} \right\} \quad \text{and} \quad \bar{\Gamma}_{\mathbf{q},e_{2}} = \left\{ \begin{tikzpicture}[scale=0.4, baseline={([yshift=-\the\fontdimen22\textfont2]current bounding box.center)}]
	\draw (0,0) rectangle (3,3);
	\draw (1,0) -- (1,3);
	\draw (0,2) -- (3,2);
	\draw (0,1) -- (1,1);
	\draw (2,3) -- (2,2);
	
	\node (1) at (0.5,2.5) {$*$};
	\node (2) at (1.5, 2.5) {$0$};
	\node (3) at (2.5, 2.5) {$0$};
	\node (4) at (0.5, 1.5) {$0$};
	\node (5) at (0.5, 0.5) {$0$};
	\node (6) at (2,1) {$*$};
\end{tikzpicture} \right\},
\]
with indices $q+1$ and $q^{2}$ in $\bar{\Gamma}_{\mathbf{q}}$, respectively. Now $e_{1}$ points to $\mathbf{z}$ but for $\gamma \in \bar{\Gamma}_{\mathbf{q}} \smallsetminus \bar{\Gamma}_{\mathbf{q},e_{1}}$
not to $\mathbf{y}$ (the third row vector of $\gamma$, see \eqref{Eq.Factor-of-automorphy-of-gamma}); hence for such $\gamma$, $P(\aut(\gamma, \cdot))(e_{1}) = +1$ and
\begin{equation}
	P(\alpha_{2})(\gamma e_{1}) = (q^{2}-1) + P(\alpha_{2})(e_{1}) = q^{2}-1 + q - q^{2} = q-1.
\end{equation}
On the other hand, $e_{2}$ points neither to $\mathbf{z}$ nor (for $\gamma \in \bar{\Gamma}_{\mathbf{q}}$) to $\mathbf{y}$, therefore $P(\aut(\gamma, \cdot))(e_{2}) = 0$ and 
\begin{equation}
	P(\alpha_{2})(\gamma e_{2}) = P(\alpha_{2})(e_{2}) = 0.
\end{equation}
Together,
\begin{equation}
	\sum_{e \in \mathbf{A}_{\mathbf{q},1}} P(\alpha_{2})(e) = (q-q^{2}) + q(q-1) + q^{2} \cdot 0 = 0.
\end{equation}
This fits with $N_{\mathbf{q}}(\alpha_{2}) = 0$ which we know a priori, as $\mathbf{q}$ does not belong to $\mathcal{W}(2)$.

\subsection{} \textbf{Case} (r) of \ref{Sub.Case-distinction-o-p-q-r-s} , $\mathbf{n} = \mathbf{r} = (n,1)$ with $n > 1$. We have 
\[
	\bar{\Gamma}_{\mathbf{r}} = \left\{ \begin{tikzpicture}[scale=0.4, baseline={([yshift=-\the\fontdimen22\textfont2]current bounding box.center)}]
	\draw (0,0) rectangle (3,3);
	\draw (0,1) -- (3,1);
	\draw (1,0) -- (1,3);
	\draw (0,2) -- (3,2);
	\draw (2,0) -- (2,3);
	
	\node (1) at (0.5,2.5) {$*$};
	\node (2) at (1.5,2.5) {$*$};
	\node (3) at (2.5,2.5) {$*$};
	\node (4) at (0.5,1.5) {$0$};
	\node (5) at (1.5,1.5) {$*$};
	\node (6) at (2.5,1.5) {$*$};
	\node (7) at (0.5,0.5) {$0$};
	\node (8) at (1.5,0.5) {$0$};
	\node (9) at (2.5,0.5) {$*$};
\end{tikzpicture} \right\},
\] 
and there are three orbits of 1-arrows emanating from $\mathbf{r}$, represented
by 
\[
	e_{1} = (\mathbf{r}, (n+1,2)) = (1:0:0), \quad e_{2} = (\mathbf{r}, (n,0)) = (0:1:0), \quad e_{3} = (\mathbf{r}, (n-1,1)) = (0:0:1),
\]
where the right hand description refers to \eqref{Eq.Isomorphism-for-projective-spaces} . The stabilizers are 
\[
	\bar{\Gamma}_{\mathbf{r}, e_{i}} = \left\{ \left. \begin{tikzpicture}[scale=0.4, baseline={([yshift=-\the\fontdimen22\textfont2]current bounding box.center)}]
	\draw (0,0) rectangle (3,3);
	\draw (0,2) -- (3,2);
	\draw (1,0) -- (1,3);
	\draw (0,1) -- (3,1);
	\draw (2,0) -- (2,3);
	
	\node (1) at (0.5, 2.5) {$a$};
	\node (2) at (1.5, 2.5) {$b$};
	\node (3) at (2.5, 2.5) {$c$};
	\node (4) at (0.5, 1.5) {$0$};
	\node (5) at (1.5, 1.5) {$d$};
	\node (6) at (2.5,1.5) {$e$};
	\node (7) at (0.5, 0.5) {$0$};
	\node (8) at (1.5, 0.5) {$0$};
	\node (9) at (2.5, 0.5) {$f$};
\end{tikzpicture} \, \right|\, b=c=0 \text{ if }i=1, e = 0 \text{ if } i=2, \text{ no restriction if } i=3\right\}.
\]
So the index = orbit length of $e_{i}$ is $q^{2}$, $q$, $1$ for $i=1,2,3$. As the factors of automorphy are trivial in this case, we find
\begin{equation}
	P(\alpha_{2})(e) = 0, q^{2}-q, 0
\end{equation}
if $e$ is $\bar{\Gamma}_{\mathbf{r}}$-equivalent with $e_{i}$. Therefore
\begin{equation} \label{Eq.Case-r-sum-of-van-der-Puts}
	\sum_{e \in \mathbf{A}_{\mathbf{r},1}} P(\alpha_{2})(e) = q(q^{2}-q) = q^{3} - q^{2}.
\end{equation}
For $\boldsymbol{\omega} \in \mathbf{F}_{\mathbf{r}}$, the characteristic sequence of $\Lambda_{\boldsymbol{\omega}}$ starts with $\lambda_{1} = 1$, $\lambda_{2} = T$, $\lambda_{3} = \omega_{2}$,
while $\lvert \lambda_{4} \rvert > \lvert \lambda_{3} \rvert = q$. As in \eqref{Eq.Alpha-2-of-omega} ,
\begin{equation}
	\alpha_{2}(\boldsymbol{\omega}) \equiv \alpha_{2}(W')
\end{equation}
with $W' = \mathds{F} + \mathds{F}T + \mathds{F}\omega_{2}$. By the same argument as in \eqref{Eq.Case-p-Local-inner-degree}  we find
\begin{equation}
	N_{\mathbf{r}}(\alpha_{2}) = q N_{\mathbf{0}}(\beta),
\end{equation}
where $\beta$ is the function $\beta(\boldsymbol{\omega}) = \alpha_{1}(\mathds{F}\omega_{2} + \mathds{F})$. Now the zeroes of $\beta(\omega_{1}:\omega_{2}:1)$ in $\mathds{P}^{2}/\mathds{F}$ are the $q^{2}-q$
lines $\{ (*:\omega_{2}:1) \}$ with $\omega_{2} \in \mathds{F}^{(2)} \smallsetminus \mathds{F}$ (see \cite{Gekeler1999-2} Proposition 8.4; $\mathds{F}^{(2)}$ is the quadratic extension of $\mathds{F}$ in
$\bar{\mathds{F}}$), each with multiplicity 1. Therefore $N_{\boldsymbol{0}}(\beta) = q^{2}-q$ and
\begin{equation}
	N_{\mathbf{r}}(\alpha_{2}) = q(q^{2}-q) = q^{3} - q^{2}
\end{equation}
as predicted by \eqref{Eq.Case-r-sum-of-van-der-Puts} .

\subsection{} \textbf{Case} (s) of \ref{Sub.Case-distinction-o-p-q-r-s} , $\mathbf{s} = (n_{1}, n_{2})$ with $n_{1} \geq n_{2} > 1$. Here both $P(\alpha_{2})(e) = 0$ for each $e \in \mathbf{A}_{\mathbf{s},1}$ by \eqref{Eq.Log-of-spectral-norm-of-alpha-2} and
$N_{\mathbf{s}}(\alpha_{2}) = 0$ since $\mathbf{s} \notin \mathcal{W}(2)$.

\textbf{Conclusion:} The preceding discussion yields complete control over the spectral norm function
\begin{align*}
	\log_{q} \lVert \alpha_{2} \rVert \colon \mathcal{BT}^{3}(\mathds{Q})	&\longrightarrow \mathds{Q}, \\
																								\mathbf{x}							&\longmapsto \log_{q} \lVert \alpha_{2} \rVert_{\mathbf{x}}
\end{align*}
the zero locus $\mathcal{BT}^{3}(\alpha_{2}) = \lambda( \Omega^{3}(\alpha_{2}))$ of $\alpha_{2}$ and the reductions of $\alpha_{2}$ both at vertices in and not in $\mathcal{BT}^{3}(\alpha_{2})$. It may
be performed with essentially the same ideas, but increasing complexity, for $\alpha_{k}$ with larger $k$ and $r$. It is desirable to dispose of similar investigations of the forms ${}_{a}\ell_{k}$ and notably
of the basic forms $g_{k} = {}_{T}\ell_{k}$.

\clearpage

\begin{bibdiv}
	\begin{biblist}
		\bib{Basson2017}{article}{author={Basson, Dirk}, title={A product formula for the higher rank Drinfeld discriminant function}, journal={J. Number Theory}, volume={178}, year={2017}, pages={190-200}}
		\bib{BassonBreuer2017}{article}{author={Basson, Dirk}, author={Breuer, Florian}, title={On certain Drinfeld modular forms of higher rank}, journal={J. Théor. Nombres Bordeaux}, volume={29}, number={3}, year={2017}, pages={827-843}}
		\bib{BassonBreuerPink1-ta}{article}{author={Basson, Dirk}, author={Breuer, Florian}, author={Pink, Richard}, title={Drinfeld modular forms of arbitrary rank, Part I: Analytic theory}, journal={J. Number Theory, to appear}, eprint={arXiv:1805.12335}} 
		\bib{BassonBreuerPink2-ta}{article}{author={Basson, Dirk}, author={Breuer, Florian}, author={Pink, Richard}, title={Drinfeld modular forms of arbitrary rank, Part II: Comparison with Algebraic Theory}, journal={J. Number Theory, to appear}, eprint={arXiv:1805.12337}}
		\bib{BassonBreuerPink3-ta}{article}{author={Basson, Dirk}, author={Breuer, Florian}, author={Pink, Richard}, title={Drinfeld modular forms of arbitrary rank, Part III: Examples}, journal={J. Number Theory, to appear}, eprint={arXiv:1805.12339}}
		\bib{BreuerPazukiRazafinjatovo-ta}{article}{author={Breuer, Florian}, author={Pazuki, Fabien}, author={Razafinjatovo, Mahefason Heriniaina}, title={Heights and isogenies of Drinfeld modules}, eprint={arXiv:1908.03485}} 
		\bib{ChenLee2012}{article}{author={Chen, Imin}, author={Lee, Yoonjin}, title={Coefficients of exponential functions attached to Drinfeld modules of rank 2}, journal={Manuscripta Math.}, volume={139}, number={1-2}, year={2012}, pages={123-136}}
		\bib{ChenLee2013-1}{article}{author={Chen, Imin}, author={Lee, Yoonjin}, title={Newton polygons, successive minima, and different bounds for Drinfeld modules of rank 2}, journal={Proc. Amer. Math. Soc. 141}, year={2013}, number={1}, pages={83-91}}
		\bib{ChenLee2013-2}{article}{author={Chen, Imin}, author={Lee, Yoonjin}, title={Explicit isogeny theorems for Drinfeld modules}, journal={Pacific J. Math.}, volume={263}, year={2013}, number={1}, pages={87-116}}
		\bib{ChenLee2019}{article}{author={Chen, Imin}, author={Lee, Yoonjin}, title={Explicit surjectivity results for Drinfeld modules of rank 2}, journal={Nagoya Math. J.}, volume={234}, year={2019}, pages={17-45}}
		\bib{Cornelissen1995}{article}{author={Cornelissen, Gunther}, title={Sur les zéros des séries d'Eisenstein de poids $q^{k}-1$ pour $\mathrm{GL}_{2}(\mathds{F}_{q}[T])$. (French) [On the zeroes of Eisenstein series of weight $q^{k}-1$ for $\mathrm{GL}_{2}(\mathds{F}_{q}[T])$]}, journal={C. R. Acad. Sci. Paris Sér. I Math.}, volume={321}, year={1995}, number={7}, pages={817-820}}
		\bib{Gekeler1979}{thesis}{author={Gekeler, Ernst-Ulrich}, title={Drinfeld-Moduln und modulare Formen über rationalen Funktionenkörpern. (German) [Drinfel'd modules and modular forms over rational function fields]}, type={Dissertation}, organization={Rheinische Friedrich-Wilhelms-Universität, Bonn}, year={1979}, note={Bonner Mathematische Schriften [Bonn Mathematical Publications], 119. Universität Bonn, Mathematisches Institut, Bonn, 1980. iv+142pp.}, volume={119}, pages={iv+142pp.}} %Fehlen: Universität Bonn, Mathematisches Institut, Bonn, 1980
		\bib{Gekeler1988}{article}{author={Gekeler, Ernst-Ulrich}, title={On the coefficients of Drinfel'd modular forms}, journal={Invent. Math.}, volume={93}, year={1988}, number={3}, pages={667-700}}
		\bib{Gekeler1999-1}{article}{author={Gekeler, Ernst-Ulrich}, title={A survey on Drinfeld modular forms}, journal={Turkish J. Math}, volume={23}, year={1999}, number={4}, pages={485-518}}
		\bib{Gekeler1999-2}{article}{author={Gekeler, Ernst-Ulrich}, title={Some new results on modular forms for $\mathrm{Gl}(2, \mathds{F}_{q}[T])$. Recent progress in algebra (Taejon/Seoul, 1997)}, pages={111-141}, journal={Contemp. Math.}, volume={224}, publisher={Amer. Math. Soc., Providence, RI}, year={1999}} % Unsicher!
		\bib{Gekeler2011}{article}{author={Gekeler, Ernst-Ulrich}, title={Zero distribution and decay at infinity of Drinfeld modular coefficient forms}, journal={Int. J. Number Theory}, volume={7}, year={2011}, number={3}, pages={671-693}}
		\bib{Gekeler2019}{article}{author={Gekeler, Ernst-Ulrich}, title={Towers of $\mathrm{GL}(r)$-type of modular curves}, journal={J. Reine Angew. Math.}, volume={754}, year={2019}, pages={87-141}}
		\bib{Gekeler2017}{article}{author={Gekeler, Ernst-Ulrich}, title={On Drinfeld modular forms of higher rank}, journal={J. Théor. Nombres Bordeaux}, volume={29}, year={2017}, number={3}, pages={875-902}}
		\bib{Gekeler-ta-1}{article}{author={Gekeler, Ernst-Ulrich}, title={On Drinfeld modular forms of higher rank II}, journal={J. Number Theory, to appear}, eprint={arXiv:1708.04197}}
		\bib{Gekeler2018}{article}{author={Gekeler, Ernst-Ulrich}, title={On Drinfeld modular forms of higher rank III: The analogue of the $k/12$-formula}, journal={J. Number Theory}, volume={192}, year={2018}, pages={293-306}}
		\bib{Gekeler-ta-2}{article}{author={Gekeler, Ernst-Ulrich}, title={On Drinfeld modular forms of higher rank IV: Modular forms with level}, journal={J. Number Theory, to appear}, eprint={arXiv:1811.09460}}
		\bib{Gekeler-ta-3}{article}{author={Gekeler, Ernst-Ulrich}, title={Invertible functions on non-archimedean symmetric spaces}, journal={Algebra and Number Theory, to appear}, eprint={arXiv:1904.00844}}
		\bib{Goss1980}{article}{author={Goss, David}, title={$\pi$-adic Eisenstein series for function fields}, journal={Compositio Math. 41}, year={1980}, number={1}, pages={3-38}}
		\bib{Goss1996}{article}{author={Goss, David}, title={Basic structures of function field arithmetic}, journal={Ergebnisse der Mathematik und ihrer Grenzgebiete [Results in Mathematics and Related Areas]}, number={3}, volume={35}, publisher={Springer-Verlag, Berlin}, year={1996}, pages={xiv+422}}
		\bib{Ore1933}{article}{author={Ore, Oystein}, title={On a special class of polynomials}, journal={Trans. Amer. Math. Soc.}, volume={35}, year={1933}, number={3}, pages={559-584}}
		\bib{PapikianWei-ta}{article}{author={Papikian, Mihran}, author={Wei, Fu-Tsun}, title={On the cuspidal divisor group and Eisenstein ideal of Drinfeld modular varieties}, eprint={arXiv:1904.03489}}
	\end{biblist}
\end{bibdiv}

\end{document}